\newcommand{\mb}{\mathbb}
\newtheorem{Theorem}{Theorem}
\newtheorem{Lemma}[Theorem]{Lemma}
\newtheorem{lemma}[Theorem]{Lemma}
\newtheorem{Corollary}[Theorem]{Corollary}
\newtheorem{Proposition}[Theorem]{Proposition}
\newtheorem{Definition}{Definition}
\newcommand{\til}{\widetilde}
\newcommand{\wh}{\widehat}
\newcommand{\ra}{\longrightarrow}
\renewcommand{\{}{\lbrace}
\renewcommand{\}}{\rbrace}
\newcommand{\interior}[1]{%
	{\kern0pt#1}^{\mathrm{o}}%
}
\DeclareMathOperator{\Iso}{I}
\DeclareMathOperator{\dd}{d}
\DeclareMathOperator{\hh}{h}
\DeclareMathOperator{\Ker}{Ker}
\DeclareMathOperator{\Stab}{Stab}
\DeclareMathOperator{\dimm}{dim}
\DeclareMathOperator{\vcd}{vcd}
\DeclareMathOperator{\KK}{K}
\newcommand{\cout}[1]{}
\def\be#1\ee{\begin{align}\begin{split} #1 \end{split}\end{align}}
\def\beq#1\eeq{\begin{align*}\begin{split} #1 \end{split}\end{align*}}
\title[Quasi-isometric rigidity in cusp-decomposable manifolds]{On the quasi-isometric rigidity of chambers and walls in cusp-decomposable manifolds}
\author[Hayde\'e Contreras Peruyero]{Hayde\'e Contreras Peruyero}
\address{Instituto de Matem\'aticas,
	Universidad Nacional Aut\'onoma de M\'exico,
	Coyoac\'an,
	04510. CDMX, M\'exico}
\email{haydeeperuyero@im.unam.mx}
\date{\today}
\begin{document}
	
	\begin{abstract}
		A cusp-decomposable manifold is a manifold constructed from a finite number of complete, negatively curved, finite volume manifolds and identifying the boundaries of truncated cusps by diffeomorphisms. 
		Using properties of the electric space of the universal cover of cusp-decomposable manifolds, we show that the inclusion of walls and pieces induces quasi-isometric embeddings. We also show that isomorphisms between fundamental groups of higher graph manifolds preserve the decomposition into pieces.
	\end{abstract}

	\maketitle
	
\section{Introduction}

In the theory of 3-manifolds, the first definition of a graph manifold is due to Waldhausen in 1967 \cite{Waldhausen}. Consider the following family of manifolds which extend the ideas of Frigerio, Lafont and Sisto \cite{FLS}. 
\begin{Definition}\cite[Definition 1]{BJS}\label{HGM}
	\begin{enumerate}
		\item For every $i=1,...,n$ with $2\leq n_{i}\leq n$, let $V_{i}$ a finite volume, complete, non-compact, pinched negatively curved $n_{i}-$manifold.
		\item Denote by $B_{i}$ the compact smooth manifold with boundary obtained by truncating the cusps of $V_{i}$, i.e., by removing from $V_{i}$ a non-maximal horospherical open neighbourhood of each cusp.
		\item Take fiber bundles $Z_{i}\ra B_{i}$ with fiber $N_{i}$ a compact quotient of an aspherical nilpotent simply connected Lie group $\til{N_{i}}$ by the action of a uniform lattice $\Gamma_{i}$ of dimension $n-n_{i}$, i.e., $N_{i}$ is diffeomorphic to $\til{N_{i}}/\Gamma_{i}$.
		\item Fix a pair of diffeomorphic boundary components between different $Z_{i}$, provided one exists, and identify the paired boundary components using diffeomorphism, so that we obtain a connected $n-$manifold.  
	\end{enumerate}
	A manifold constructed in this way is called a higher dimension graph manifold. Each $Z_{i}$ is called a piece of $M$, when $\dimm(B_{i})=n$ we have $Z_{i}=B_{i}$ and we say that the piece is a pure piece. The boundary components of the pieces $Z_{i}$ are called \textit{walls} and we denote them by $W$.
\end{Definition}

These manifolds were introduced by B\'arcenas, Juan-Pineda and Su\'arez-Serrato and have been shown to satisfy the Borel Conjecture for $n\geq 6$ \cite{BJS}. This definition also includes the manifolds described by Connell and Su\'arez-Serrato \cite{CSS}.\\
\indent Tam Nguyen Phan described a family of graph manifolds called \textit{cusp decomposable manifolds} \cite{Tam}. This is a subfamily of the manifolds constructed as in Definition \ref{HGM} where the pieces are all pure pieces. This is analogous to the JSJ decomposition of 3-manifolds.  The family of {\em twisted-doubles} described by Aravinda and Farrell is also included in the cusp-decomposable manifolds \cite{Aravinda-Farrel}. 

\indent We can associate to the universal cover of a higher graph manifold (and a cusp decomposable manifold) a Bass-Serre tree \cite{Serre}. The preimage of a vertex in the Bass-Serre tree will be called a \textit{chamber}.  The preimage of an edge in the Bass-Serre tree will be called a \textit{wall}, (see Definition \ref{def-walls-chambers}).\\

\indent We say that a piece $Z$ of $M$ is a \textit{surface piece} if the base of the fiber bundle which defines $Z$ is a hyperbolic surface. \\

\indent Our two main results are:
\begin{Theorem}\label{Isomorphism-preserve-pieces}
	Let $M_{1}$,  $M_{2}$ be two higher graph manifolds, as in definition \ref{HGM} and without surface pieces. Let $\Delta_{1}$ be a subgroup of $\pi_{1}(M_1)$ that is conjugate to the fundamental group $\pi_{1}(Z_{1,j})$ of a piece $Z_{1,j}$ in $M_{1}$, and $\varphi:\pi_{1}(M_1)\longrightarrow \pi_1(M_2)$ be an isomorphism. Then $\varphi(\Delta_{1})$ is conjugate to the fundamental group $\pi_{1}(Z_{2,\sigma(j)})$ in  $\pi_1(M_2)$ of a piece $Z_{2,\sigma(j)}$ in $M_{2}$, for some permutation $\sigma$ of the boundary components of the pieces.
\end{Theorem}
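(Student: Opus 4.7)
The plan is to promote the group isomorphism $\varphi$ to an equivariant quasi-isometry between the universal covers and then use the quasi-isometric rigidity of the chamber decomposition established in the first part of the paper. By the Milnor--\v{S}varc lemma, $\varphi$ induces a $(\pi_1(M_1),\pi_1(M_2))$-equivariant quasi-isometry $\til\varphi\colon \til M_1 \to \til M_2$. By Bass--Serre theory, a conjugate of $\pi_1(Z_{1,j})$ is the stabilizer of some chamber $\til Z_{1,j}\subset \til M_1$, and by the quasi-isometric embedding result of this paper the inclusion $\til Z_{1,j}\hookrightarrow \til M_1$ is a quasi-isometric embedding; the same holds for chambers of $M_2$.

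The main step will be an intrinsic quasi-isometric characterization of chambers inside the universal cover. I would characterize each chamber as a maximal quasi-isometrically embedded subspace that is quasi-isometric to the universal cover of a piece, i.e., to a pinched negatively curved finite-volume manifold cover extended by a nilpotent fiber. The no--surface-piece hypothesis enters at this point: with it, each chamber is genuinely ``fatter'' than any wall, since the peripheral wall subgroups are virtually nilpotent of strictly smaller rank than the enclosing piece subgroup. One can make this precise using, for example, the relative hyperbolicity of the piece subgroup with respect to its nilpotent peripheral subgroups, which distinguishes chambers both from walls and from unions of chambers crossing a wall.

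With this characterization in hand, $\til\varphi(\til Z_{1,j})$ is a maximal quasi-isometrically embedded subset of $\til M_2$ of the required type, and a standard coarse-geometry argument places it within bounded Hausdorff distance of a unique chamber $\til Z_{2,k}\subset \til M_2$. Equivariance of $\til\varphi$ forces $\varphi(\pi_1(Z_{1,j}))$ to coarsely stabilize $\til Z_{2,k}$, and since the stabilizer of a chamber in $\til M_2$ is precisely a conjugate of $\pi_1(Z_{2,k})$, we conclude that $\varphi(\Delta_1)$ is conjugate to $\pi_1(Z_{2,k})$. Setting $\sigma(j)=k$ and repeating the argument piece-by-piece, while applying the same analysis to $\varphi^{-1}$ to obtain bijectivity, produces the claimed permutation.

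The principal obstacle will be the intrinsic characterization of chambers: one must rule out the possibility that $\til\varphi$ smears a chamber across a wall into a neighboring chamber or collapses a chamber into a wall. The no--surface-piece hypothesis is essential here, because a hyperbolic surface piece would share coarse growth and rank invariants with some of its bounding walls, and in that case chambers could not be separated from walls by peripheral growth or asymptotic dimension alone; excluding surface pieces is precisely what makes the maximality in the characterization unambiguous.
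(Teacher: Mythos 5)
Your outline reproduces the outer shell of the argument (Milnor--\v{S}varc, chambers mapped near chambers, stabilizers therefore conjugate), but the step you defer --- an ``intrinsic quasi-isometric characterization of chambers'' that forces $\til\varphi(\til Z_{1,j})$ to lie within bounded Hausdorff distance of a chamber --- is precisely the hard content of the theorem, and you do not supply it. As stated it amounts to asserting quasi-isometric rigidity of the chamber decomposition for an arbitrary quasi-isometry, which is a strictly stronger statement than what is needed and is not proved in this paper: Proposition \ref{qi-cubrientes} does \emph{not} say that a quasi-isometry preserves chambers, it takes as a hypothesis that walls are sent near walls and only then upgrades this to chambers (via Lemma \ref{third-wall}). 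The paper verifies that hypothesis not by coarse geometry but algebraically: wall stabilizers are characterized inside $\pi_{1}(M)$ as the maximal nilpotent subgroups of Hirsch length $n-1$ (the set $\mathcal{N}(\pi_{1}(M))$, Propositions \ref{NG1} and \ref{NG2}), a purely group-theoretic property that any isomorphism $\varphi$ automatically preserves (Corollary \ref{Estab-isomorfismo}). This is also where the no-surface-piece hypothesis genuinely enters: a surface piece would contain maximal nilpotent subgroups of Hirsch length $n-1$ (the fiber group extended by a hyperbolic element of the surface base) that are \emph{not} wall stabilizers, so the algebraic characterization would fail --- a more specific obstruction than the ``shared coarse growth and rank invariants'' you gesture at. Your suggestion to distinguish chambers by relative hyperbolicity of the piece groups is only a heuristic; no argument is given that rules out $\til\varphi$ smearing a chamber across a wall, which you yourself identify as the principal obstacle.

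A second, smaller problem: you invoke the quasi-isometric embedding of chambers and walls (Theorem \ref{quasi-isometric-embeding}) for $\til Z_{1,j}\hookrightarrow\til M_{1}$, but that theorem is proved only for cusp-decomposable manifolds, whereas Theorem \ref{Isomorphism-preserve-pieces} concerns general higher graph manifolds without surface pieces; the embedding result is therefore not available here (and the paper's proof does not use it). The workable repair is to replace your unproven chamber characterization by the paper's chain: uniqueness of fixed vertices and stabilized walls (Lemmas \ref{estabi-vertex} and \ref{unicidad-estabilizador-paredes}, which rest on the uniformly exponential growth of Lemma \ref{exp-growth} versus polynomial growth of nilpotent groups), the Hirsch-length characterization of wall stabilizers, Corollary \ref{Estab-isomorfismo}, and then Proposition \ref{qi-cubrientes} together with Corollary \ref{uni-chambers} to pass from ``coarsely stabilizes'' to ``stabilizes'' and conclude $\varphi(\Delta_{1})$ is conjugate to a piece group.
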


To prove this Theorem, we begin by explaining the properties of the universal cover $\til{M}$ as a tree of spaces and we analyse the Bass-Serre tree $T$ associated to the decomposition into pieces. With these properties and the fact that the fundamental group of a higher graph manifold has uniformly exponential growth (Lemma \ref{exp-growth}), we show in Lemma \ref{estabi-vertex} that the stabilizer of a vertex $v$ in $T$ can not fix any other vertex. Also, in Lemma \ref{unicidad-estabilizador-paredes} we prove the analogous property for walls. In Lemma \ref{wall1-subset-nbh-wall2}, Corollary \ref{wall-adjacent-to-chamber1} and Corollary \ref{uni-chambers}, we describe certain metric properties of walls and chambers that allow us to understand when two walls or two chambers are the same. Then, we prove that the maximal nilpotent subgroups of $\pi_{1}(M)$ with Hirsch length $n-1$ are the stabilizers of walls. This characterization of the stabilizers of walls and chambers, together with the Milnor-Svar$\check{\textrm{c}}$ Lemma enable us to proof Theorem \ref{Isomorphism-preserve-pieces}, the details are found in Section \ref{section3}.  

\indent Let $(X_{1},\dd_{1})$ and $(X_{2},\dd_{2})$ be two metric spaces. A map $f: X_{1} \longrightarrow X_{2}$ is called a $(c,d)-$quasi-isometric embedding if there exist constants $c\geq 1$ and $d\geq 0$ such that for all $x,y\in X_{1}$
	$$\frac{1}{c}\dd_{1}(x,y) - d \leq \dd_{2}(f(x),f(y))\leq c\dd_{1}(x,y)+d $$
If, in addition, there exist a constant $k\geq 0$ such that every point in $X_{2}$ lies in the $k-$neighbourhood of the image of $f$, then $f$ is called a $(c,d)-$quasi-isometry and we say that $X_{1}$ and $X_{2}$ are quasi-isometric.\\

Our second main result shows that in a cusp-decomposable manifold $M$ the fundamental groups of walls and pieces are quasi-isometrically embedded in $\pi_{1}(M)$.
\begin{Theorem}\label{quasi-isometric-embeding}
	Let $M$ be a cusp decomposable manifold. Then, the inclusion of each piece and wall induces quasi-isometric embeddings of their fundamental groups into their image in $\pi_{1}(M)$.
\end{Theorem}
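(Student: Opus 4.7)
The plan is to reduce the statement, via the Milnor-Schwarz lemma, to a geometric question about inclusions into the universal cover $\til M$, and then exploit the tree-of-spaces structure together with the electric (coned-off) metric to establish the required quasi-isometric embeddings.

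Applying Milnor-Schwarz to the geometric actions of $\pi_{1}(M)$ on $\til M$, of $\pi_{1}(Z_{i})$ on $\til Z_{i}$, and of $\pi_{1}(W)$ on $\til W$, the statement is equivalent to showing that the inclusions $\til W \hookrightarrow \til M$ and $\til Z_{i} \hookrightarrow \til M$ are quasi-isometric embeddings, where the source carries its intrinsic Riemannian metric and $\til M$ carries the metric induced from its lifted Riemannian metric. I would first prove that each wall is quasi-isometrically embedded in each adjacent chamber. A chamber $\til Z_{i}$ is the universal cover of $V_{i}$ with an equivariant family of open horoballs removed, and a wall is a bounding horosphere. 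Although horospheres are exponentially distorted in the full pinched negatively curved space $\til V_{i}$, once the open horoballs are excised the exponential shortcuts through them are no longer available: any path in $\til Z_{i}$ between two points on a horosphere must detour around the excised horoball. Combined with the relative hyperbolicity of $\pi_{1}(V_{i})$, this yields that horospheres are undistorted in $\til Z_{i}$, with uniform constants depending only on the curvature bounds.

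To upgrade this to a quasi-isometric embedding of chambers into $\til M$, I would pass to the electric space $\til M^{e}$ obtained by coning off each wall to bounded diameter. By Farb's work each coned-off chamber is Gromov hyperbolic, and since $\til M^{e}$ is a tree of Gromov-hyperbolic vertex spaces with point-like edge spaces (modelled on the Bass-Serre tree $T$), it is itself Gromov hyperbolic, and each chamber is quasi-convex in $\til M^{e}$. Concretely, given $x, y \in \til Z_{j}$ and a geodesic $\gamma$ from $x$ to $y$ in $\til M$, the tree structure of $T$ forces each maximal excursion of $\gamma$ out of $\til Z_{j}$ to enter and exit through a single wall $W$, at points $p, q \in W$; the quasi-isometric embedding of $W$ in the adjacent chamber provides a lower bound on the length of the excursion in terms of $\dd_{W}(p,q)$, so that replacing it by a geodesic segment along $W$ from $p$ to $q$ produces a path in $\til Z_{j}$ from $x$ to $y$ whose length is controlled by $\dd_{\til M}(x,y)$. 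This yields $\til Z_{j} \hookrightarrow \til M$ as a quasi-isometric embedding, and the corresponding embedding $\til W \hookrightarrow \til M$ follows by composing with $\til W \hookrightarrow \til Z_{i}$.

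The main obstacle is controlling the additive error introduced by many short excursions when replacing $\gamma$ segment by segment: each wall crossing nominally costs an additive constant, and there is no a priori positive lower bound on the length of an individual excursion, so a naive count could make the additive constant blow up. The electric space handles precisely this issue. In the coned-off metric, walls have uniformly bounded diameter, so short excursions are absorbed into the coning cost, and the global Gromov-hyperbolic structure of $\til M^{e}$ gives quasi-convexity of chambers independently of the number of wall crossings. Translating this quasi-convexity back to the original metric by means of the quasi-isometric embedding of walls into chambers established above delivers the theorem.
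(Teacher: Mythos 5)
Your overall skeleton is the same as the paper's (reduce via Milnor--\v{S}varc to showing walls and chambers are undistorted in $\widetilde{M}$; first embed walls in adjacent chambers using convexity of horospheres in the truncated pieces; then use the electric space to tame excursions and backtracking), but the central quantitative step is not justified and, as stated, does not work. You claim that for a maximal excursion of a geodesic $\gamma\subset\widetilde{M}$ that leaves the chamber $\widetilde{Z}_{j}$ through a wall $W$ at $p$ and returns at $q$, ``the quasi-isometric embedding of $W$ in the adjacent chamber provides a lower bound on the length of the excursion in terms of $\dd_{W}(p,q)$.'' The excursion is in no way confined to the adjacent chamber: it may travel arbitrarily far into the half-tree of chambers and walls beyond $W$ before returning. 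Undistortion of $W$ in a single adjacent chamber therefore gives no bound on its length in terms of $\dd_{W}(p,q)$; what is needed is undistortion of $W$ in all of $\widetilde{M}$ (equivalently in the half-space beyond $W$), which is essentially the statement you are trying to prove, so the argument is circular at exactly the hard point.

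The electric space, as you use it, does not repair this. In $\widetilde{M}^{e}$ every wall is coned to bounded diameter, so the quantity $\dd_{W}(p,q)$ is invisible there: hyperbolicity of the tree of coned-off chambers and quasi-convexity of chambers in it (weak relative hyperbolicity) control only electric lengths and say nothing about intrinsic distances along walls, nor about how many times a path may leave and re-enter the same thin wall. To ``translate back to the original metric'' one needs a bounded-penetration type statement --- uniform control, in the wall metric, of entry and exit points of (quasi)geodesics into walls, together with a bound on the number and cost of backtracks --- and this is precisely what the paper supplies: Farb's Lemmas \ref{Lemma4.4Farb}, \ref{electric4.8}, \ref{electric4.9} and Osin's estimate (Proposition \ref{paths-horospheres1}) are packaged into the construction of good paths (Lemma \ref{auxiliar-goodpath}, Proposition \ref{good-path}), where the separation constant $R$ between horospheres bounds the number of modifications and the visual-size bound $\Delta$ bounds the cost of each, and then into the replacement argument of Lemma \ref{chambers-then-wals-bilipschitz} and Propositions \ref{walls-bilipschitz}, \ref{chambers-bilipschitz}. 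Without an argument of this kind (or, alternatively, an appeal to genuine relative hyperbolicity of $\pi_{1}(M)$ with respect to the wall subgroups, with bounded coset penetration, plus undistortion of peripheral subgroups), your final sentence asserting that quasi-convexity in $\widetilde{M}^{e}$ ``delivers the theorem'' is a claim rather than a proof.
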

This result generalizes Theorem 0.16 of Frigerio, Lafont and Sisto \cite{FLS}. They mention that their strategy to prove the quasi-isometric embedding can not be generalized to the case of cusp-decomposable manifolds. In order to circumvent these difficulties we use properties of the {\it electric space}. The electric space is the space obtained by contracting the boundary of the removed horospheres in the base manifolds $B_{i}$ to a point.\\

\indent As a consequence of Lemma 3.2 of Osin \cite{Osin}, in Proposition \ref{paths-horospheres1} we prove that the length of geodesics lying on the horoboundaries of the pieces are bounded above by the lengths of paths that connect different horoboundaries. Relying on previous results by Farb \cite{Farb}, in Lemma \ref{auxiliar-goodpath} we prove that there exists a positive constant such that the length of a good path between any near two points in a wall is bounded by above by the product of this constant and the distance between the points. Also, using the properties electric spaces, in Proposition \ref{good-path} we construct a non-backtracking good path between two points in the same internal wall, which is the analogous result for cusp-decomposable manifolds to Lemma 7.8 of \cite{FLS}. This allows us to show that the inclusion of walls and chambers endowed with the path metric are quasi-isometrically embedded in the universal cover $\widetilde{M}$ of $M$. \\

The concept of quasi-isometry plays a fundamental role. One of the central questions is to understand what algebraic properties are invariant under quasi-isometries. Stallings shows that the property of splitting over a finite group  for a finitely generated group is invariant under quasi-isometries \cite{Stallings-1968, Stallings-1971}. Papasoglu showed that for a finitely presented group, admitting a splitting over a virtually infinite cyclic group is also invariant under quasi-isometries \cite{Papasoglu-2005}. Papasoglu also showed that if the vertex groups are fundamental groups of aspherical manifolds and the edge groups are smaller than the vertex groups, then the splitting is preserved under quasi-isometries \cite{Papasoglu-2007}. As a consequence of this result,  if a group $G$ is an amalgamated product of two aspherical 3-manifolds over a surface group, then any group quasi-isometric to $G$ also splits over a virtual surface group. Bowditch shows that a one-ended hyperbolic group that is not a triangle group splits over a two-ended group if and only if its Gromov boundary has local cut points. Therefore admitting such splitting is invariant under quasi-isometries for hyperbolic groups \cite{Bowditch-1998, Bowditch-1998-2, Bowditch-1999}. Another fundamental concept is the JSJ decomposition of a manifold $M$. This canonical decomposition is unique up to isotopy and its corresponds to the decomposition of the fundamental group of $M$ as a graph of groups. Kapovich and Leeb proved that this canonical decomposition is invariant under quasi-isometries and as a consequence it is a geometric invariant of the fundamental group \cite{Kapovich-Leeb}. In this paper, we contribute to the understanding of properties that are invariant under quasi-isometries for higher graph manifolds and cusp-decomposable manifolds. \\

\indent In Section \ref{section2}, we start with a review fundamental notions of Hadamard manifolds, electric spaces, nilpotent groups, and growth type of groups. In Section \ref{section3}, we begin with some metric properties of the universal cover of a higher graph manifold $M$ and we completely characterize the set of wall stabilizers. With this, and the Milnor-Svar$\check{\textrm{c}}$ Lemma we prove that an isomorphism between the fundamental groups of two higher graph manifolds preserves the decomposition into pieces. In Section \ref{section4}, we focus on cusp-decomposable manifolds and we prove Theorem
\ref{quasi-isometric-embeding}. 

\section*{Acknowledgments}

This work was supported by a doctoral CONACYT fellowship and the grant IN104819 from PAPIIT-DGAPA-UNAM. The author is very grateful to Pablo Su\'arez-Serrrato for proposing the problem and helping through the elaboration of this paper. She also wants to thank No\'e B\'arcenas, Jean-Fran\c cois Lafont and Pierre Py for their valuable comments on this work.

\section{Preliminaries}\label{section2}

\subsection{Hadamard manifolds}

For the analysis of complete, finite volume Riemannian manifolds $M$ with pinched negative sectional curvature, we follow Eberlein \cite{Eberlein}. \\
\indent Let V be a Hadamard manifold, i.e., a complete, simply connected Riemannian manifold of non-positive sectional curvature. Denote by $\dd(x , y)$ the Riemannian metric on $V$ and assume that all geodesics in $V$ have unit speed. We say that two geodesics $\alpha,\gamma$ of $V$ are \textit{asymptotes} if there exists a constant $C > 0$ such that $\dd(\alpha(t),\gamma(t))\leq C$ for all $t\geq 0$. A \textit{point at infinity} of $V$ is an equivalence class of asymptotic geodesics. The set of all points at infinity will be denoted by $V(\infty)$.\\ 
\indent Let $\Iso(V)$ be the group of isometries of $V$. Associate to each isometry $\varphi$ a displacement function $\dd_{\varphi}: p \rightarrow \dd(p,\varphi p)$. An isometry $\varphi$ is \textit{elliptic} if $\dd_{\varphi}$ has zero minimum, is \textit{hyperbolic} if $\dd_{\varphi}$ has positive minimum and is \textit{parabolic} if $\dd_{\varphi}$ has no minimum in $V$. \\
\indent Let $V$ be a finite volume Hadamard manifold with sectional curvature $K$, and $\Lambda$ be a subgroup of the isometries of $V$. We say that $\Lambda$ is a lattice of $V$ if $\Lambda$ acts freely and properly discontinuously on $V$ and the quotient manifold $M=V/ \Lambda$ has finite volume. Moreover $M$ only has finitely many ends, each end is parabolic and Riemannian collared, and $\pi_{1}(M)$ is finitely presented.\\
\indent Let $\pi: V \rightarrow M$ be the projection map and denoted by $\tilde{\gamma}$ a geodesic in $H$ that determines a point at infinity fixed by some parabolic element $\varphi$ of $\Gamma$. We say that an end $E$ of $M$ is \textit{parabolic} if there exists a divergent geodesic ray $\gamma:\left[ 0, \infty \right) \rightarrow M $ that converges to $E$ and can be expressed as $\pi \circ \tilde{\gamma}$. \\
\indent Suppose that the sectional curvature of $V$ satisfies the condition $-b\leq K \leq -a < 0$ for some positive constants $a,b$. Then the maximal almost nilpotent subgroups of $\Lambda$ are the nonidentity stability groups $\Lambda_{x}$, and each group $\Lambda_{x}$ is finitely generated. Here $x$ is a point at infinity in $V$ fixed by some parabolic element of $\Lambda$. Also each nontrivial almost nilpotent subgroup of $\Lambda$ is contained in a unique maximal almost nilpotent subgroup. \\
\indent We will recall some of the combinatorial properties of the fundamental group of such manifolds, following Farb \cite{Farb}.\\  
\indent Let $\widetilde{M}$ be the universal cover of a complete, finite volume, pinched negatively curved Riemannian manifold $M$, i.e., there exist constants $a,b>0$ such that $-b^{2}\leq \KK(M) \leq -a^{2}< 0$. In particular, $\widetilde{M}$ is a Hadamard manifold. \\
\indent Let $x\in \widetilde{M}$, $z$ is a point at infinity and $\gamma$ is the geodesic ray from $x$ to $z$. A horosphere through $x$ with center $z$ is the limit as $t\rightarrow \infty$ of the sphere in $\widetilde{M}$ with center in $\gamma(t)$ and radius $t$. Let $\dd_{S}(x,y)$ denote the path metric in a horosphere $S$. That is, if $x,y\in S$ then $\dd_{S}(x,y)$ is the infimum of the lengths of all paths in $S$ from $x$ to $y$.\\
\begin{Theorem}(Heintze-Im Hof \cite[Theorem 4.9]{Heintze-Hof})\label{distance-in-horospheres}
	Let $\gamma$ be a geodesic tangent to a horosphere $S$ in a pinched Hadamard manifold $Y$ where the sectional curvature satisfies the conditions $-b^{2}\leq \KK(M) \leq -a^{2}< 0$. If $p,q$ are the projections of $\gamma(\pm \infty)$ onto $S$, then 
	\begin{equation*}
		\frac{2}{b} \leq \dd_{S}(p,q) \leq \frac{2}{a}.
	\end{equation*}
\end{Theorem}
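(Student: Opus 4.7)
The plan is to base everything on the Busemann function $f$ centered at the center $c\in Y(\infty)$ of $S$, normalized so that $f(x_{0})=0$ at the tangency point $x_{0}=\gamma(0)$. Then $S=f^{-1}(0)$, $|\nabla f|\equiv 1$, and the horoball $\{f\leq 0\}$ is convex, so $\gamma$ lies in $\{f\geq 0\}$; the projections $p,q$ are the intersections with $S$ of the asymptotic rays from $\gamma(\pm\infty)$ to $c$. Since $|\nabla f|\equiv 1$ forces $\Hess f(\nabla f,\cdot)=0$, the height function $h(t):=f(\gamma(t))$ satisfies $h(0)=h'(0)=0$ and $h''(t)=\Hess_{f}(\gamma'(t)^{T},\gamma'(t)^{T})$, where $\gamma'(t)^{T}$ is the horospherical component of $\gamma'(t)$, of squared norm $1-h'(t)^{2}$.

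The essential analytic input I would borrow from the first half of the Heintze--Im Hof paper is the Riccati-type Hessian pinching
\[
a\bigl(g-df\otimes df\bigr)\leq \Hess f\leq b\bigl(g-df\otimes df\bigr),
\]
valid for any Busemann function when $-b^{2}\leq\KK\leq -a^{2}$. Feeding this into the formula for $h''$ produces the differential inequality $a(1-h'(t)^{2})\leq h''(t)\leq b(1-h'(t)^{2})$. From $h'(0)=0$ this integrates to $\tanh(at)\leq h'(t)\leq \tanh(bt)$ for $t\geq 0$, with symmetric bounds for $t\leq 0$; in particular $h(t)\geq a^{-1}\log\cosh(at)$.

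For the upper bound I introduce the horospherical projection $\rho(t)=\pi_{S}(\gamma(t))$ obtained by flowing along $-\nabla f$ for time $h(t)$, so $\rho(0)=x_{0}$, $\rho(t)\to p$ and $\rho(-t)\to q$ as $t\to\infty$. The same Hessian bound controls the linearized asymptotic flow: a vector tangent to $S_{h(t)}:=\{f=h(t)\}$ is contracted by a factor at most $e^{-ah(t)}$ when pulled back to $S$, by Rauch comparison for stable asymptotic Jacobi fields. Combined with the tangential-norm bound,
\[
|\rho'(t)|_{S}\leq e^{-ah(t)}\sqrt{1-h'(t)^{2}}\leq \operatorname{sech}^{2}(at),
\]
so $\operatorname{length}(\rho|_{[0,\infty)})\leq \int_{0}^{\infty}\operatorname{sech}^{2}(at)\,dt=1/a$. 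Adding the analogous estimate on $(-\infty,0]$ and applying the triangle inequality yields $\dd_{S}(p,q)\leq 2/a$.

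The lower bound $\dd_{S}(p,q)\geq 2/b$ is the delicate step, because lower-bounding the length of the explicit curve $\rho$ does not by itself lower-bound $\dd_{S}(p,q)$: the projection $\rho$ need not be a minimizer inside $S$. My strategy would be the dual of the above: reproduce the configuration in the constant-curvature model $\HH^{n}_{-b^{2}}$, where the corresponding quantity equals exactly $2/b$, and transfer the sharp inequality using the opposite curvature bound $\KK\geq -b^{2}$. Concretely, along a minimizing horospherical geodesic $\eta:[0,L]\to S$ from $p$ to $q$ I would apply a Jacobi-field/Rauch comparison against the $-b^{2}$ model, or invoke the fact that a pinched Hadamard manifold is CBB$(-b^{2})$ and use Toponogov-type estimates to bound the length of any $S$-path from $p$ to $q$ from below. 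The main obstacle is precisely this passage from the extrinsic Jacobi-field comparison to intrinsic distance inside the horosphere; controlling the two sharp endpoints simultaneously is the technical heart of the Heintze--Im Hof argument.
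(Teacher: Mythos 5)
First, a remark on the ground truth: the paper does not prove this statement at all — it is quoted from Heintze--Im Hof \cite[Theorem 4.9]{Heintze-Hof} — so there is no internal proof to compare yours against; I can only assess your sketch on its own merits. Your upper bound is essentially correct and is the standard comparison argument: the Busemann--Hessian pinching $a(g-df\otimes df)\le \Hess f\le b(g-df\otimes df)$, the resulting bounds $h'(t)\ge \tanh(at)$ and $h(t)\ge a^{-1}\log\cosh(at)$, and the $e^{-ah(t)}$-contraction of the horospherical projection do combine to give $|\rho'(t)|_{S}\le \operatorname{sech}^{2}(at)$, hence a rectifiable curve in $S$ from $p$ to $q$ of length at most $2\int_{0}^{\infty}\operatorname{sech}^{2}(at)\,dt=2/a$, so $\dd_{S}(p,q)\le 2/a$ (modulo the routine verification that $\rho(t)$ indeed converges to $p$ and $q$).

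The lower bound, however, is a genuine gap, and you concede it yourself: you only name candidate tools (Rauch against the $-b^{2}$ model, CBB/Toponogov) and explicitly defer ``the technical heart,'' which is precisely the content of the inequality $\dd_{S}(p,q)\ge 2/b$. The two natural implementations of your plan do not close it. Running your projection estimate in the opposite direction (curvature $\ge -b^{2}$ gives $|\rho'(t)|_{S}\ge \operatorname{sech}^{2}(bt)$ and length$(\rho)\ge 2/b$) bounds the length of the particular curve $\rho$, not $\dd_{S}(p,q)$, since $\rho$ need not be minimizing in $S$ — as you note. And even granting the chord comparison $\dd_{S}(x,y)\ge \tfrac{2}{b}\sinh\bigl(\tfrac{b}{2}d(x,y)\bigr)$, the triangle-inequality route through $d(p,q)\ge \lim_{s}\bigl(2s-h(s)-h(-s)\bigr)\ge \tfrac{2}{b}\log 2$ only yields $\dd_{S}(p,q)\ge \tfrac{2}{b}\sinh(\log 2)=\tfrac{3}{2b}$, which misses the sharp constant. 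So to establish the statement you must actually carry out the sharp comparison for an arbitrary competitor path inside the horosphere (this is what Heintze--Im Hof do), or else cite their theorem, as the paper itself does.
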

The following Lemma is a well known fact from Riemannian geometry. For completeness we include a proof.
\begin{lemma}\label{geodesic bi inv}
	Let $M$ be a Riemannian manifold of $\dimm k$ with $\gamma\subseteq M$ a bi-infinite geodesic and $G$ acting on $M$ isometrically. If $G\gamma=\gamma$ then $G\leq \mathbb{R}\times H$, where $H\leq O(k-1)\times \mathbb{Z}_{2}$.
\end{lemma}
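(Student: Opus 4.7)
Fix a basepoint $p \in \gamma$. My plan is to decompose every $g\in G$ as a commuting product of three isometries — a translation along $\gamma$, an orthogonal rotation of the normal directions at $p$, and a possible reflection of $\gamma$ through $p$ — and show that this decomposition embeds $G$ into $\mathbb{R}\times O(k-1)\times\mathbb{Z}_2$.

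Restriction to $\gamma$ defines a homomorphism $\rho\colon G \to \mathrm{Isom}(\gamma)$. Since $\gamma$ is isometric to $\mathbb{R}$, one has $\mathrm{Isom}(\gamma) \cong \mathbb{R}\rtimes\mathbb{Z}_2$, with the $\mathbb{R}$ factor recording the translation length along $\gamma$ and the $\mathbb{Z}_2$ factor recording whether $g$ reverses the orientation of $\gamma$ about $p$. The kernel $K = \ker \rho$ consists of isometries of $M$ that fix $\gamma$ pointwise; for $g\in K$, the differential $dg_p$ fixes $T_p\gamma$ pointwise and preserves its orthogonal complement $N_p\gamma\cong\mathbb{R}^{k-1}$, on which it acts as an orthogonal transformation. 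This yields a map $K \to O(N_p\gamma)\cong O(k-1)$, injective because a Riemannian isometry of a connected manifold is determined by its value and differential at a single point.

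To assemble these data into a group embedding, I would use parallel transport along $\gamma$ to identify the normal spaces $N_q\gamma$ with $N_p\gamma$ canonically for every $q\in\gamma$. Because parallel transport along a geodesic is itself an isometry that commutes with the $G$-action, the orthogonal component of any $g \in G$ becomes independent of its translation component; and the reflection component, always measured with respect to the fixed basepoint $p$, commutes with the orthogonal component by construction. Defining $H$ to be the image of $G$ under the projection onto the $O(k-1)\times\mathbb{Z}_2$ factors then yields the desired inclusion $G \le \mathbb{R}\times H$ with $H \le O(k-1)\times\mathbb{Z}_2$. The main obstacle I expect is precisely this last bookkeeping step: since $\mathrm{Isom}(\gamma)$ is naturally the semidirect product $\mathbb{R}\rtimes\mathbb{Z}_2$ rather than a direct product, the choice of basepoint and the parallel-transport identification must be made with care so that the translation parameter in $\mathbb{R}$ genuinely commutes with the reflection factor in $H$ inside the ambient group.
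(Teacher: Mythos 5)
Your construction is essentially the one in the paper: fix a parallel orthonormal frame along $\gamma$ whose first vector is $\gamma'$, record for each $g\in G$ its action on $\gamma$ together with the matrix of $dg$ in the parallel frames, and note that the latter lies in $O(k-1)\times\mathbb{Z}_{2}$ because $dg$ preserves the splitting of $T_{\gamma(t)}M$ into the tangent and normal directions of $\gamma$. So the route is the same; the difference is that you explicitly flag the step the paper passes over in silence, and that step is a genuine gap, not bookkeeping. Restriction to $\gamma$ is a homomorphism to $\mathrm{Isom}(\mathbb{R})\cong\mathbb{R}\rtimes\mathbb{Z}_{2}$, and it cannot be straightened into a direct $\mathbb{R}$-factor: the paper's map $\rho\colon G\to\mathbb{R}$ defined by $g\cdot\gamma(0)=\gamma(\rho(g))$ satisfies $\rho(gh)=\rho(g)-\rho(h)$ whenever $g$ reverses the orientation of $\gamma$, so it is not a morphism unless $G$ preserves orientation. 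Concretely, for $M=\mathbb{R}^{2}$ with $\gamma$ the $x$-axis, the group generated by the unit translation along $\gamma$ and the reflection $(x,y)\mapsto(-x,y)$ preserves $\gamma$ and is infinite dihedral, hence nonabelian, while $\mathbb{R}\times O(1)\times\mathbb{Z}_{2}$ is abelian; so no choice of basepoint or parallel-transport identification can produce the direct-product conclusion in dimension $k=2$, and in general the natural maps only ever yield the semidirect version.

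What your argument does prove, and what should be taken as the content of the lemma, is that $g\mapsto\bigl(g|_{\gamma},\, dg|_{\gamma^{\perp}} \text{ in the parallel frame}\bigr)$ is an injective homomorphism $G\hookrightarrow\mathrm{Isom}(\mathbb{R})\times O(k-1)\cong(\mathbb{R}\rtimes\mathbb{Z}_{2})\times O(k-1)$: it is injective because an isometry of a connected manifold is determined by its value and differential at one point, and it is a homomorphism because parallel transport along $\gamma$ is natural under isometries preserving $\gamma$ (this is the correct use of your remark that parallel transport commutes with the $G$-action). That semidirect form carries the information actually invoked later, e.g.\ in Lemma \ref{estabilizador-paredes}, where one only needs to control how an element preserving $\gamma$ acts on $\gamma$ itself. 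In short: same approach as the paper, but the final ``commuting'' step you worry about cannot be carried out as stated, and it is equally absent from the paper's own proof; the fix is to replace $\mathbb{R}\times O(k-1)\times\mathbb{Z}_{2}$ by $(\mathbb{R}\rtimes\mathbb{Z}_{2})\times O(k-1)$.
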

\begin{proof}
	Let $\gamma: G\rightarrow M$ be an infinite geodesic in $M$, such that $G\gamma = \gamma$. Consider an orthonormal basis $\{\gamma'(0), e_{1}, ... , e_{k-1} \}$ of the tangent space at $\gamma(0)$. By parallel transport we have an orthonormal basis at every $\gamma(t)$.\\
	Let $\rho : G \rightarrow \mathbb{R}$ be a morphism defined by $g\cdot \gamma(0) = \gamma (\rho(g))$, for each $g\in G$. \\
	We will use the parallel transport along $\gamma$, with respect to the action of $G$ to define a morphism $\psi$ as follows: 
	\begin{equation*}
		\begin{array}{c c c l}
		\psi : & G & \rightarrow & O(k) \\
		& g & \mapsto & [T_{\gamma(0)}M \xrightarrow{\dd_{g}} T_{\gamma(\rho(g))}M]
		\end{array}
	\end{equation*}
	Finally, as the tangent vectors can go to $\gamma'(-t)$ or $\gamma'(t)$ and they are all unitary vectors, then the image of $\psi$ is $O(k-1)\times \mathbb{Z}_{2}$. Therefore, $G\leq \mathbb{R}\times H$, where $H\leq O(k-1)\times \mathbb{Z}_{2}$.
\end{proof}
	
\subsection{The electric space}
Let $\til{M}$ be a pinched Hadamard manifold on which $ \pi_1(M)$ acts freely, properly and discontinuous by isometries. Let $\Gamma$ be the Cayley graph associated to $\pi_{1}(M)$ and  $\wh{\Gamma}$ be the coned-off Cayley graph of $\Gamma$ with respect to the cusp subgroup $H$. Choose a $\Gamma_{i}$ invariant set of disjoint horoballs centered on the parabolic fixed points. Denote by $Z$ the resultant space of deleting the interior of all these horoballs and endow $Z$ with the path metric. Each boundary component of $Z$ is a totally geodesic horosphere, and $\Gamma$ acts freely and cocompactly by isometries on $Z$. Choosing a base point $x\in Z$ on a horosphere, we obtain a quasi-isometry of $\Gamma$ with $Z$ given by $\gamma \mapsto \gamma \cdot x$ \cite{Farb}. \\
\indent The \emph{electric space} $\wh{Z}$ is the quotient space obtained from $Z$ by identifying points which lie in the same horospherical boundary component of $Z$. The path metric $\dd_{Z}$ of $Z$ induces a  path pseudo-metric $\dd_{\wh{Z}}$ on $\wh{Z}$ as follows. Let 
\begin{equation*}
	\dd_{Y}(x,y) = \begin{cases} 
	0   , &  \textit{ if  } x,y\in S \textrm{ for some horoboundary component of } Z\\
	\dd_{Z}(x,y) , & \textrm{  otherwise.}
	\end{cases}
\end{equation*}  
Then $\dd_{\wh{Z}}(x,y)$ is equal to the infimum of $\, \sum \dd_{Y}(x_{i},x_{i+1})\, $ over all sequences of points $x=x_1,x_2,\dots,x_n=y$. Observe that $\dd_{\wh{Z}}$ locally agrees with the path metric $\dd_{\til{M}}$ outside the horospheres.\\
\indent A path $\gamma$ in $\wh{Z}$ is called an \emph{electric path}. The \emph{electric length} of an electric path $\gamma$ is denoted by $l_{\wh{Z}}(\gamma)$, it is the sum of the lengths in $Z$ of the subpaths of $\gamma$ lying outside every horosphere. An \emph{electric geodesic} from $x$ to $y$ in $\wh{Z}$ is a path from $x$ to $y$ such that $l_{\wh{Z}}(\gamma)$ is minimal. An \emph{electric P-quasi-geodesic} is a $P$-quasi-geodesic in the pseudo-path-metric space $\wh{Z}$.\\
\indent Let $P, C$ be two positive constants. The $(P,C)-$quasi isometry $f$ from $\Gamma$ to $Z$ induces a quasi-isometry $\wh{f}:\wh{\Gamma}\longleftrightarrow \wh{Z}$, defined by $\wh{f}(v)=f(v)$ for all $v\in\Gamma$. Hence, we obtain the following commutative diagram: 
\[\begin{tikzcd}
	\Gamma \arrow{r}{f} \arrow{d}{} & Z \arrow{d}{} \\
	\wh{\Gamma} \arrow{r}{\wh{f}} &  \wh{Z} 
	\end{tikzcd}
\]
\indent Observe that $\wh{f}$ is a $(2RP,C+1)-$quasi-isometry, where $R$ is such that the distance in $\til{M}$ between any two horospheres in the collection of removed horospheres is at least $R$.\\
\indent Consider a geodesic $\gamma\in M$ that does not intersect a horosphere $S$. The visual size of $S$ with respect to $\gamma$ is the diameter of the set $T=\{s\in S \textit{ } | \textit{ }	\exists \textit{ } t \textrm{ for which } \overline{\gamma(t)s \cap S} =\{s\}\}$ in the metric $\dd_{S}$. The \emph{visual size} of $S$ is the supremum of the visual size of $S$ with respect to $\gamma$ taken over all geodesics that do not intersect $S$.
\begin{Lemma}(Farb \cite[Lemma 4.4]{Farb})\label{Lemma4.4Farb}
	Horospheres in a pinched Hadamard manifold $Y$ have uniformly bounded visual size.
\end{Lemma}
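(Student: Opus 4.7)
The plan is to use the Heintze--Im Hof theorem (Theorem~\ref{distance-in-horospheres}) applied to tangent geodesics to $S$, combined with convexity of the horoball bounded by $S$, to produce a bound on the $\dd_S$-diameter of $T$ depending only on the lower curvature bound $a$. Throughout, let $B$ denote the horoball bounded by $S$; since $-b^2\le K\le -a^2<0$ and $Y$ is Hadamard, $B$ is convex, and the nearest-point projection $\pi_S\co \overline{Y\setminus B}\to S$ is well-defined and $1$-Lipschitz.

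The first step is, given $s\in T$, to extend the segment $\overline{\gamma(t)s}$ (which meets $S$ only at $s$) to a bi-infinite geodesic $\alpha_s$. Convexity of $B$ forces $\alpha_s$ to be tangent to $S$ at $s$ and to lie in $\overline{Y\setminus B}$, since otherwise the segment would have to re-enter $B$ and meet $S$ again. Applying Theorem~\ref{distance-in-horospheres} to $\alpha_s$ yields $\dd_S(\pi_S(\alpha_s(+\infty)),\pi_S(\alpha_s(-\infty)))\le 2/a$. Because $s$ is the nearest point of $S$ to any point on $\alpha_s$, and the projection $t\mapsto\pi_S(\alpha_s(t))$ is a continuous path in $S$ passing through $s$, the two endpoints $p^{\pm}_s:=\pi_S(\alpha_s(\pm\infty))$ of this path satisfy $\dd_S(s,p^{\pm}_s)\le 2/a$. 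Next, I would show that $p^{\pm}_s$ varies in a bounded region of $S$ as $s$ ranges over $T$: letting $S'$ be the horosphere concentric with $S$ and tangent to $\gamma$ at the closest point $q$ of $\gamma$ to $B$, Theorem~\ref{distance-in-horospheres} applied to $\gamma$ (which is tangent to $S'$) gives $\dd_{S'}(\pi_{S'}(\gamma(+\infty)),\pi_{S'}(\gamma(-\infty)))\le 2/a$; since the radial flow between concentric horospheres contracts distances as one moves toward the cusp center in pinched negative curvature, the analogous bound transfers to $S$. Because each $\alpha_s$ crosses $\gamma$, its endpoints at infinity $\alpha_s(\pm\infty)$ lie in the visual cone of $\gamma$, forcing $p^{\pm}_s$ to lie in a uniformly bounded neighbourhood of $\pi_S(\gamma)$ in the metric $\dd_S$. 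Combining these three estimates via the triangle inequality gives $\dd_S(s_1,s_2)\le C(a)$ for all $s_1,s_2\in T$, with $C(a)$ an explicit constant depending only on $a$.

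The hard part will be cleanly establishing that the radial projections $p^{\pm}_s$ stay in a bounded $\dd_S$-neighbourhood of $\pi_S(\gamma)$ as $s$ varies over $T$, rather than drifting with the intersection point $\alpha_s\cap\gamma$. This requires a careful analysis of the visual shadow on $S$ of the family of all tangent geodesics crossing $\gamma$, for which I anticipate combining the exponential divergence of geodesics in pinched negative curvature (to control how $\alpha_s(\pm\infty)$ can move in the visual boundary as $\alpha_s$ rotates about its intersection with $\gamma$) with the radial contraction of horosphere distances toward the cusp (to convert that visual control into a genuine $\dd_S$ bound on $S$).
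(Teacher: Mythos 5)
Your proposal has a genuine gap, and in fact its first step is false. You claim that, for $s\in T$, convexity of the horoball $B$ forces the bi-infinite extension $\alpha_s$ of the segment $\overline{\gamma(t)s}$ to be tangent to $S$ at $s$, "since otherwise the segment would have to re-enter $B$." The hypothesis $\overline{\gamma(t)s}\cap S=\{s\}$ constrains only the segment, not its extension: beyond $s$ the geodesic is free to enter the open horoball, and generically it does. A concrete counterexample is $s=\pi_S(\gamma(t))$, the nearest point of the convex horoball to $\gamma(t)$: the segment meets $S$ exactly at $s$, so $s\in T$, yet the geodesic through $\gamma(t)$ and $s$ is orthogonal to $S$ at $s$ and its extension runs straight into $B$. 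Consequently Theorem \ref{distance-in-horospheres} cannot be applied to $\alpha_s$ for a general $s\in T$, and the auxiliary claim that $s$ is the nearest point of $S$ to every point of $\alpha_s$ also fails (already in the Euclidean plane the nearest point of a disc to points on a tangent line drifts away from the tangency point). On top of this, the second half of your argument, namely that the projected endpoints $p^{\pm}_s$ stay in a uniformly bounded $\dd_S$-neighbourhood of $\pi_S(\gamma)$, is exactly where the content of the lemma sits, and you leave it as an anticipated combination of exponential divergence and radial contraction rather than a proof.

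For comparison: the paper does not prove this statement at all; it is quoted from Farb, and the sentence following the lemma only records the reduction that if $\dd_S(y,\pi_S(x))\leq C$ for every witness pair $(x,y)$ with $x\in\gamma$, $y\in T$, then the visual size is at most $2/a+2C$. The standard argument behind that reduction is different from yours: (i) for a fixed exterior point $x$, one shows by comparison with the constant-curvature $-a^2$ model that the portion of $S$ visible from $x$ (bounded by the tangency points of geodesics issuing from $x$) lies within uniformly bounded $\dd_S$-distance of the single foot point $\pi_S(x)$; (ii) one bounds the $\dd_S$-diameter of $\pi_S(\gamma)$ by $2/a$, using Theorem \ref{distance-in-horospheres} together with a monotonicity or limiting argument since $\gamma$ is disjoint from the horoball rather than tangent to it; (iii) the triangle inequality then gives the $2/a+2C$ bound. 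As written, your proposal establishes neither input (i) nor input (ii), so it does not yet constitute a proof.
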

\indent Let $S\subseteq \widetilde{M}$ be a horosphere and denote by $\pi_{S}$ the orthogonal projection onto $S$. Let $\gamma$ be a geodesic not intersecting $S$ and suppose another geodesic goes from $x\in \gamma$ to some point $y\in S$. Then, if $\dd_{S}(y,\pi_{S}(x))\leq C$ for some constant $C$, the visual size of  $S$ is bounded by $2/a + 2C$.
\begin{Lemma}(Farb \cite[Lemma 4.5]{Farb})\label{trackMgeodesics}
	There exist $K=K(P)$, $L=L(P)>0$ with $P>0$ given with the following property: let $\beta$ be a electric $P-$quasi-geodesic from $x$ to $y$, and let $\gamma$ be the geodesic in $\til{M}$ from $x$ to $y$. Then any subpath of $\beta$ which lies outside of  $N_{\wh{Z}}(\gamma,K)$  must have electric length at most $L$. In particular, any electric $P-$quasi-geodesic from $x$ to $y$ stays completely inside of $N_{\wh{Z}}(\gamma, K + L/2)$.
\end{Lemma}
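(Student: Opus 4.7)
The plan is to exploit the fact that Riemannian geodesics in $\til{M}$ project to controlled electric quasi-geodesics in $\wh{Z}$, and then compare $\beta$ with $\gamma$ via a shortcut through $\gamma$ itself. The proof is a Morse-style stability argument, made quantitative using the bounded visual size of horospheres.

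First I would show that the image $\wh{\gamma}$ of $\gamma$ in $\wh{Z}$ is an electric $P'$-quasi-geodesic, where $P'=P'(a,b)$ depends only on the curvature pinching constants. The essential input is Lemma \ref{Lemma4.4Farb}: each time $\gamma$ enters and exits a horoball, the entry and exit points lie at uniformly bounded horospherical distance, so collapsing those horoball traversals to single points distorts the natural length by only a bounded amount per horoball, while the electric cost of traveling inside the horoball is zero. A bounded-packing argument (only finitely many horoballs can be crossed per unit Riemannian distance along a geodesic, after the quotient) then gives the quasi-geodesic constants.

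Next, let $\beta'$ be a subpath of $\beta$ lying outside $N_{\wh{Z}}(\gamma,K)$, with endpoints $p$ and $q$. I would construct a shortcut from $p$ to $q$ through $\gamma$: the Riemannian geodesic $\sigma_{p}$ from $p$ to its nearest point $p^{\ast}\in \gamma$, then the subsegment of $\gamma$ from $p^{\ast}$ to $q^{\ast}$, then the Riemannian geodesic $\sigma_{q}$ from $q^{\ast}$ to $q$. Negative curvature makes these nearest-point projections well-defined and non-expanding, and applying Lemma \ref{Lemma4.4Farb} to the horoballs that $\sigma_{p}$ and $\sigma_{q}$ can cross controls their electric length by a function of $K$ alone. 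Combined with the quasi-geodesic estimate for $\wh{\gamma}$ from the previous step, this yields an upper bound for $\dd_{\wh{Z}}(p,q)$ in terms of $K$ and the electric distance from $p^{\ast}$ to $q^{\ast}$ along $\gamma$. Since $\beta$ is a $P$-quasi-geodesic, $l_{\wh{Z}}(\beta')\le P\,\dd_{\wh{Z}}(p,q)+P$; subtracting the contribution along $\gamma$ (which $\beta$ is forced to spend in any event) leaves an excess bounded by a constant $L=L(P)$, as required.

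The ``in particular'' clause is a standard consequence: if some point $w\in \beta$ lay outside $N_{\wh{Z}}(\gamma,K+L/2)$, then by continuity of electric length, the maximal subpath of $\beta$ containing $w$ and lying outside $N_{\wh{Z}}(\gamma,K)$ would have electric length strictly greater than $L$, contradicting the first part. The main obstacle I anticipate is making the control of the shortcut quantitative without circularity: one must bound the electric lengths of the projection legs $\sigma_{p}$ and $\sigma_{q}$ by a function of $K$ only, which requires combining the non-expansion property of nearest-point projection in non-positive curvature with the uniform bound on visual size of horospheres, rather than appealing to the conclusion itself.
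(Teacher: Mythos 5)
The paper does not actually prove this statement---it is quoted verbatim from Farb as background---so your proposal must be judged against what a correct proof of Farb's Lemma 4.5 requires, and as written it has two genuine gaps. First, the justification of your Step 1 rests on a false claim: when the geodesic $\gamma$ of $\til{M}$ penetrates a horoball, its entry and exit points are \emph{not} at uniformly bounded horospherical distance. Already in the upper half-plane with horoball $\{y\geq 1\}$, the geodesic joining $(0,1)$ to $(T,1)$ enters and exits at $\dd_{S}$-distance $T$, which grows exponentially with the penetration depth. Lemma \ref{Lemma4.4Farb} cannot repair this, since visual size is defined only with respect to geodesics that do \emph{not} meet the horosphere, and Theorem \ref{distance-in-horospheres} concerns tangent geodesics. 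The fact you want (that the electrified image of $\gamma$ is an electric quasi-geodesic with constants depending only on the pinching) is true, but it is of essentially the same depth as the lemma itself: the danger is not the number of horoballs crossed per unit length (your packing argument), but that $\dd_{\wh{Z}}(x,y)$ could a priori be far smaller than the length of $\gamma$ outside the horoballs; ruling this out needs convexity of horoballs together with a quantitative estimate that a geodesic cannot run alongside a horosphere from the outside for a long time.

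Second, and fatally, the bookkeeping in Step 2 cannot produce the constant $L$. The electric lengths of the legs $\sigma_{p},\sigma_{q}$ are not bounded by a function of $K$: lying outside $N_{\wh{Z}}(\gamma,K)$ is only a \emph{lower} bound on $\dd_{\wh{Z}}(p,\gamma)$, and even for a maximal excursion, where $\dd_{\wh{Z}}(p,\gamma)\approx K$, the Riemannian nearest-point geodesic may have arbitrarily large electric length (one would at least have to use a nearly optimal electric path instead). More fundamentally, your final chain gives $l_{\wh{Z}}(\beta')\leq P\,\dd_{\wh{Z}}(p,q)+P\leq P\bigl(2F(K)+P'\dd_{\wh{Z}}(p^{*},q^{*})+P'\bigr)+P$, while $\dd_{\wh{Z}}(p^{*},q^{*})$ is itself of the order of $l_{\wh{Z}}(\beta')$ up to the legs; this only yields $l_{\wh{Z}}(\beta')\leq PP'\,l_{\wh{Z}}(\beta')+\mathrm{const}$, which is vacuous because $PP'\geq 1$. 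The phrase ``subtracting the contribution along $\gamma$'' has no justification: a $P$-quasi-geodesic with $P>1$ is permitted an excess proportional to its length, so triangle-inequality comparisons alone can never bound an excursion independently of its length. What is missing is precisely the mechanism that penalizes travel far from $\gamma$: either the exponential contraction of nearest-point projection onto $\gamma$ in pinched negative curvature applied to the portions of $\beta'$ outside horoballs, combined with the visual-size bound to control the free progress through the (necessarily $\gamma$-disjoint) horoballs that $\beta'$ penetrates, or else the $\delta$-hyperbolicity of $\wh{Z}$, which in Farb's development is only established after this lemma. Your argument for the ``in particular'' clause is standard and fine, but it rests entirely on the first part.
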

Another important fact is that the electric space is a $\delta-$hyperbolic pseudometric space for some $\delta> 0$ (Proposition 4.6 of \cite{Farb}). The following lemmata compare the length of an electric quasi geodesic $\beta$ that penetrates a sequence of horospheres with the length of a geodesic $\alpha$ in $\til{M}$ with the same end points. We will say that $\beta$ electrically tracks $\alpha$. 
\begin{Lemma}(Farb \cite[Lemma 4.7]{Farb})\label{projectionontohoroesferes}
	Let $\beta$ be an electric $P-$quasi-geodesic that does not penetrate $S$. Then, there exists a constant $D=D(P)$ such that the projection of $\beta$ onto $S$ has a length in $S$ of at most $D\cdot l_{\wh{Z}}(\beta)$.
\end{Lemma}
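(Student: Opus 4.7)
The plan is to decompose the electric $P$-quasi-geodesic $\beta$ into ``outside-horoball'' and ``along-horoboundary'' pieces, and then bound the $\dd_S$-length of the projection onto $S$ of each piece.

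\smallskip

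\textit{Step 1 (decomposition).} First I fix a lift of $\beta$ from $\wh{Z}$ to $Z$ whose excursion on each penetrated horoboundary is a geodesic within that horoboundary, and write
\[
\beta \;=\; \beta_{1}\,\eta_{1}\,\beta_{2}\,\eta_{2}\cdots\eta_{N-1}\,\beta_{N},
\]
where each $\beta_i$ is a maximal subarc lying in the interior of $Z$ and each $\eta_j$ lies on a single horoboundary $\partial B_j$. Since $\beta$ does not penetrate $S$, every $B_j$ differs from the horoball $B(S)$ bounded by $S$.

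\smallskip

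\textit{Step 2 (interior pieces).} Horoballs are convex subsets of the Hadamard manifold $\til{M}$, so the orthogonal projection onto $\ol{B(S)}$ is $1$-Lipschitz in the ambient metric of $\til{M}$. For points of $\til{M}\setminus B(S)$ this projection equals $\pi_{S}$, and because Riemannian length of a curve in $S$ agrees with its $\dd_{S}$-length, I obtain
\[
l_{S}\bigl(\pi_{S}(\beta_i)\bigr) \;\leq\; l(\beta_i) \;=\; l_{\wh{Z}}(\beta_i).
\]

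\smallskip

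\textit{Step 3 (excursions).} For each excursion $\eta_j$ along $\partial B_j$, because $B_j$ and $B(S)$ are disjoint horoballs with distinct centers at infinity, I apply Lemma \ref{Lemma4.4Farb} to bi-infinite geodesics asymptotic to the parabolic fixed point of $B_j$. This bounds the $\dd_{S}$-diameter of $\pi_{S}(\partial B_{j})$ by a universal constant $V_{0}$ depending only on the pinching constants. Combined with the Heintze--Im Hof bound (Theorem \ref{distance-in-horospheres}) and the geodesic choice of the lift of $\eta_j$, this yields a uniform bound $l_{S}\bigl(\pi_{S}(\eta_j)\bigr)\leq V_{1}$.

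\smallskip

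\textit{Step 4 (counting and assembly).} The $\Gamma$-invariant collection of removed horoballs is uniformly separated by some $R>0$ in $Z$, so the number of excursions satisfies $N-1\leq l_{\wh{Z}}(\beta)/R$. Summing Steps 2 and 3,
\[
l_{S}\bigl(\pi_{S}(\beta)\bigr) \;\leq\; \sum_{i=1}^{N} l_{\wh{Z}}(\beta_{i}) + (N-1)\,V_{1} \;\leq\; \Bigl(1+\tfrac{V_{1}}{R}\Bigr) l_{\wh{Z}}(\beta),
\]
and taking $D=D(P)=1+V_{1}/R$ (absorbing any residual additive constant using the $P$-quasi-geodesic hypothesis on $\beta$) completes the bound.

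\smallskip

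The main obstacle I anticipate is Step 3: one must pin down a canonical choice of excursion lift and upgrade the visual-size diameter bound into a genuine $\dd_{S}$-length bound for the projection, since projection onto a horosphere is not globally Lipschitz in $\dd_{S}$. Resolving this requires carefully combining Lemma \ref{Lemma4.4Farb} with the horospherical geometry encoded in Theorem \ref{distance-in-horospheres}, and relies crucially on the disjointness of each $B_{j}$ from $B(S)$ and on the $P$-quasi-geodesic hypothesis on $\beta$ to rule out pathological excursions.
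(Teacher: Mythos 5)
You should know at the outset that the paper itself gives no proof of this statement: it is quoted verbatim from Farb (his Lemma 4.7), so there is no in-paper argument to compare against. Your outline is indeed the standard one behind Farb's proof (split $\beta$ into subpaths lying outside all horoballs and excursions along other horospheres, project the outside subpaths $1$-Lipschitzly using convexity of the horoball bounded by $S$, control the excursions via disjointness and the uniform visual bound of Lemma \ref{Lemma4.4Farb}, then count excursions). But as written there are two genuine gaps. The first is in Step 3: you pass from a bound on the $\dd_{S}$-\emph{diameter} of $\pi_{S}(\overline{B_{j}})$ to a bound on the $\dd_{S}$-\emph{length} of the projected excursion $\pi_{S}(\eta_{j})$. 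You flag this yourself, but ``carefully combining'' Lemma \ref{Lemma4.4Farb} with Theorem \ref{distance-in-horospheres} is not an argument: an excursion has zero electric length yet can be arbitrarily long in $Z$, and a curve of bounded diameter can have arbitrarily large length, so the asserted $l_{S}(\pi_{S}(\eta_{j}))\leq V_{1}$ is exactly the point at issue and is never established. The clean way out is not to project the excursion pointwise at all: since $\overline{B_{j}}$ is convex and disjoint from $S$, the geodesic between any two of its points misses $S$, so the visual-size bound gives $\dd_{S}(\pi_{S}(p),\pi_{S}(q))\leq V_{0}$ for all $p,q\in\overline{B_{j}}$, and one replaces the projected excursion by a $\dd_{S}$-geodesic of length at most $V_{0}$ between the projections of its entry and exit points; this ``bridged'' projection is what the downstream statements (Lemmas \ref{electric4.8} and \ref{electric4.9}) actually require.

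The second gap is the count $N-1\leq l_{\wh{Z}}(\beta)/R$ in Step 4. The $R$-separation of the horoballs only forces the portion of $\beta$ between excursions into two \emph{distinct} horoballs to have electric length at least $R$; nothing prevents $\beta$ from leaving a horosphere $\partial B_{j}$ and re-entering the same $\partial B_{j}$ after an arbitrarily short outside hop, producing many excursions while contributing almost nothing to $l_{\wh{Z}}(\beta)$, and the $P$-quasi-geodesic hypothesis in the electric pseudo-metric (where such wiggles cost essentially nothing) does not exclude this --- that is precisely what the separate ``without backtracking'' hypothesis in Lemmas \ref{electric4.8} and \ref{electric4.9} is for. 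For the same reason your final parenthesis, absorbing an additive constant ``using the $P$-quasi-geodesic hypothesis,'' does not work: a single excursion flanked by two very short outside segments is a legitimate electric $P$-quasi-geodesic with $l_{\wh{Z}}(\beta)$ arbitrarily small but with projection of definite size. To complete the argument you need an extra device --- for instance, group consecutive excursions into the same horoball into a single visit and bridge it only once, or reduce to the non-backtracking case --- before the multiplicative bound (or a multiplicative-plus-additive version, which is what the applications use) can be asserted.
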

We will say that an electric quasi-geodesic $\beta\in \wh{Z}$ is a \emph{quasi-geodesic without backtracking} if for each horosphere $S\in \wh{Z}$ which $\beta$ penetrates, $\beta$ never returns to $S$ after leaving $S$. The image under $\wh{f}:\wh{\Gamma}\rightarrow \wh{Z}$ of every geodesic in $\wh{\Gamma}$ is a quasi-geodesic without backtracking in $\wh{Z}$.
\begin{Lemma}(Farb \cite[Lemma 4.8]{Farb})\label{electric4.8}
	Let $\beta$ be an electric $P-$quasi-geodesic without backtracking from $x$ to $y$ and let $\gamma$ be the geodesic in  $\til{M}$ from $x$ to $y$. Then, there exists a constant $D=D(P)$ such that if any of $\beta$ or $\gamma$ penetrates $S$, then the distance in $S$ from the point of entry of this path into $S$ to its exit point is at most $D$.
\end{Lemma}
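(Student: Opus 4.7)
The strategy is to split on which of the two paths penetrates $S$, and in each case to combine the bounded visual-size principle (Lemma \ref{Lemma4.4Farb}), the Heintze--Im Hof estimate (Theorem \ref{distance-in-horospheres}), the tracking property (Lemma \ref{trackMgeodesics}), and the projection bound (Lemma \ref{projectionontohoroesferes}).

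First I would handle the case where the genuine geodesic $\gamma$ enters the horoball bounded by $S$ at a point $p'$ and exits at $q'$. The two subarcs of $\gamma$ lying outside this horoball do not meet $S$, and so by the bounded visual-size Lemma \ref{Lemma4.4Farb} each of them projects onto $S$ in a set of $\dd_{S}$-diameter at most a universal constant $v$. Passing to the tangent asymptotic geodesic that shares endpoints at infinity with $\gamma$ and invoking Theorem \ref{distance-in-horospheres} then gives a uniform estimate $\dd_{S}(p',q') \leq 2/a + 2v =: D_{0}$.

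Next I would treat the case where the electric quasi-geodesic $\beta$ enters $S$ at $p$ and exits at $q$. Because of the no-backtracking hypothesis, this is the only visit of $\beta$ to $S$, so the subpaths $\beta_{1}$ from $x$ to $p$ and $\beta_{2}$ from $q$ to $y$ do not penetrate $S$. Lemma \ref{trackMgeodesics} furnishes $\wh p, \wh q \in \gamma$ with $\dd_{\wh Z}(p,\wh p),\,\dd_{\wh Z}(q,\wh q) \leq K+L/2$. If $\gamma$ also penetrates $S$ with entry/exit $p',q'$, then the previous paragraph bounds $\dd_{S}(p',q')$; I would then bound $\dd_{S}(p,p')$ and $\dd_{S}(q,q')$ by applying Lemma \ref{projectionontohoroesferes} to $\beta_{1}$ and $\beta_{2}$, combined with visual-size control applied to the segments of $\gamma$ near $\wh p, \wh q$. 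If $\gamma$ does not penetrate $S$, Lemma \ref{Lemma4.4Farb} applied to $\gamma$ directly shows $\pi_{S}(\gamma)$ has $\dd_{S}$-diameter at most $v$; since $\pi_{S}(p)=p$, $\pi_{S}(q)=q$, and both $p,q$ are $\wh Z$-close to $\gamma$, this forces $\dd_{S}(p,q) \leq D_{1}$ for a universal $D_{1}$. Setting $D = \max\{D_{0},D_{1}\}$ completes the proof.

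\textbf{Main obstacle.} The delicate step is the last subcase, where $\beta$ but not $\gamma$ penetrates $S$: the electric $\wh Z$-neighbourhood of $\gamma$ can swell dramatically across horospheres, so transferring a bound on the $S$-projection of $\gamma$ to a bound on the $S$-location of the entry and exit of $\beta$ is not automatic. The no-backtracking hypothesis enters essentially at exactly this point, ruling out the scenario where $\beta$ drifts along $S$ for a long $\dd_{S}$-distance before leaving: once $\beta$ exits at $q$ it cannot return to $S$ and must resume tracking $\gamma$, forcing $p$ and $q$ to stay close to $\pi_{S}(\gamma)$.
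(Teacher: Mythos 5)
The paper does not actually prove this statement: it is imported verbatim (with attribution) from Farb \cite{Farb}, so there is no internal proof to compare yours against. Judged on its own terms, your proposal has a genuine gap, and the case you dispose of first cannot be repaired, because the statement as transcribed here is stronger than Farb's Lemma 4.8 and is false in that case. In Farb's paper the hypothesis of Lemma 4.8 is that $\beta$ penetrates $S$ while $\gamma$ does \emph{not} (the situation where both penetrate is Lemma 4.9, i.e.\ Lemma \ref{electric4.9} above); the assertion that a genuine $\til{M}$-geodesic $\gamma$ which penetrates $S$ enters and exits at points a uniformly bounded $\dd_{S}$-distance apart fails already in the upper half-plane: for the horoball $\{y\geq 1\}$, the geodesic semicircle of Euclidean radius $R$ enters and exits the horosphere at points whose $\dd_{S}$-distance is $2\sqrt{R^{2}-1}$, which is unbounded as the penetration deepens. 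Your Case 1 argument cannot be correct for exactly this reason: Lemma \ref{Lemma4.4Farb} only bounds the projection onto $S$ of each external subarc of $\gamma$ separately and says nothing relating the entry point to the exit point across the horoball, and Theorem \ref{distance-in-horospheres} applies to geodesics \emph{tangent} to $S$, whereas the geodesic with the same endpoints at infinity as $\gamma$ is $\gamma$ itself, which crosses $S$; so the claimed estimate $\dd_{S}(p',q')\leq 2/a+2v$ does not follow and is in fact false.

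In the case that carries the real content (only $\beta$ penetrates $S$), your quantitative step also does not close. Lemma \ref{projectionontohoroesferes} bounds the projection of a non-penetrating path by $D\cdot l_{\wh{Z}}$ of that path, and the electric lengths of $\beta_{1}$ and $\beta_{2}$ are not uniformly bounded, so applying it to them yields no uniform constant; similarly, knowing that $p$ and $q$ are $\wh{Z}$-close to $\gamma$ is nearly vacuous information about their position along $S$, since the electric pseudometric is blind to travel within a horosphere---this is precisely the ``delicate step'' you flag, but flagging it is not the same as resolving it. The standard argument uses Lemma \ref{trackMgeodesics} to produce connecting paths of \emph{uniformly bounded} electric length from the entry and exit points of $\beta$ to points on $\gamma$, applies Lemma \ref{projectionontohoroesferes} to those short paths (not to $\beta_{1},\beta_{2}$), and then invokes the bounded visual size of the projection of the non-penetrating geodesic $\gamma$ (Lemma \ref{Lemma4.4Farb}) to compare the resulting footpoints on $S$. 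Your sketch names these ingredients but applies them to the wrong objects, so as written the proof has a genuine gap in both cases; the correct target statement to prove (or cite) is Farb's original formulation, with Lemma \ref{electric4.9} covering the case where both paths penetrate $S$.
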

\begin{Lemma}(Farb \cite[Lemma 4.9]{Farb})\label{electric4.9}
	Let $\beta$ be an electric $P-$quasi-geodesic without backtracking from $x$ to $y$ and let $\gamma$ be the geodesic in  $\til{M}$ from $x$ to $y$. Then, there exists a constant $D=D(P)$ such that if $\beta$ and $\gamma$ penetrate some horosphere $S$, then the entry point of $\beta$ into $S$ is at distance $D$ in $S$ from the entry point of $\gamma$ into $S$; similarly for the exit points.
\end{Lemma}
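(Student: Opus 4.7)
The plan is to bound $\dd_S(p_\beta, p_\gamma)$ (and the analogous exit-point distance) by combining the electric tracking estimate of Lemma~\ref{trackMgeodesics} with the single-path entry-exit bound of Lemma~\ref{electric4.8} and the uniform visual-size estimate of Lemma~\ref{Lemma4.4Farb}. I would first set up notation: write $p_\beta, q_\beta$ for the entry and exit of $\beta$ into and out of $S$, and $p_\gamma, q_\gamma$ for those of $\gamma$. By the no-backtracking hypothesis each path enters and leaves $S$ exactly once, so these four points are well defined, and the portions $\beta_1 := \beta|_{[x, p_\beta]}$ and $\beta_3 := \beta|_{[q_\beta, y]}$ (and similarly for $\gamma$) are electric quasi-geodesics that do not penetrate $S$.

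Next, I would apply Lemma~\ref{trackMgeodesics} to the pair $(\beta, \gamma)$, placing $\beta$ inside the electric $(K + L/2)$-neighborhood of $\gamma$. Since the image of $p_\beta$ in $\wh Z$ is identified with a cone point $c_S$ which is also the image of $p_\gamma$, the raw electric estimate gives no direct $\dd_S$-information and must be paired with a projection argument. Concretely, the tracking places $p_\beta$ electrically close to some point $w \in \gamma$, which by Lemma~\ref{Lemma4.4Farb} has projection onto $S$ lying within $\dd_S$-distance $V$ (the visual-size constant) of $p_\gamma$ — because the initial $\til M$-geodesic segment $\gamma_1 = \gamma|_{[x, p_\gamma]}$ does not enter the horoball bounded by $S$, so its projection onto $S$ has diameter at most $V$ and has $p_\gamma$ as one limit. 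The electric proximity of $p_\beta$ to $w$ then transfers to a $\dd_S$-proximity of $p_\beta$ to this projection and hence to $p_\gamma$, with error bounded in terms of $V$ and $P$. The analogous argument for the exit portions yields the companion bound on $\dd_S(q_\beta, q_\gamma)$.

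Finally, to absorb the error terms introduced by the passage through the cone point, I would invoke Lemma~\ref{electric4.8} applied to each of $\beta$ and $\gamma$ individually, controlling $\dd_S(p_\beta, q_\beta)$ and $\dd_S(p_\gamma, q_\gamma)$; combining these with the entry-point bound and the triangle inequality in $(S, \dd_S)$ completes the argument, with $D = D(P)$ absorbing the constants from Lemmas~\ref{Lemma4.4Farb}, \ref{trackMgeodesics}, and \ref{electric4.8}.

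The main obstacle is precisely that the electric metric collapses $S$ to a point, so Lemma~\ref{trackMgeodesics} alone yields no $\dd_S$-information. The bridging role is played by Lemma~\ref{Lemma4.4Farb}: the visual-size bound converts \emph{electrically close to a point on $\gamma$ that approaches $S$ from outside the horoball} into \emph{within bounded $\dd_S$-distance of the entry point $p_\gamma$}. Making this precise — in particular, isolating the auxiliary $\til M$-geodesic segment whose visual size yields the bound, and controlling the discrepancy introduced when the relevant tracking point $w$ lies inside the horoball bounded by $S$ (the complement of a horoball is not geodesically convex) — is the technical heart of the argument.
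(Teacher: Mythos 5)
The paper never proves this statement---it is quoted directly from Farb \cite[Lemma 4.9]{Farb}---so your proposal can only be judged on its own terms, and on its own terms it has a genuine gap at exactly the point you defer as ``the technical heart.'' The tracking input you start from is vacuous here: since $\gamma$ itself penetrates $S$, every point of $S$ lies at electric distance zero from $\gamma$ (all of $S$ is a single point of $\wh{Z}$), so ``$p_\beta$ is electrically close to some $w\in\gamma$'' carries no information about where $p_\beta$ sits in $(S,\dd_S)$. In particular, an electric geodesic from $p_\beta$ to $w$ may first slide arbitrarily far along $S$ at zero electric cost, so no projection argument applied to such a connecting path can convert electric proximity into a bound on $\dd_S(p_\beta,\pi_S(w))$; the asserted ``transfer'' is precisely the content of the lemma and is never established. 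A workable argument (along the lines of Farb's) instead isolates the sub-quasi-geodesic $\beta_1$ of $\beta$ from $x$ to its entry point, which by non-backtracking does not penetrate $S$, and applies the projection and visual-size machinery (Lemmas \ref{Lemma4.4Farb} and \ref{projectionontohoroesferes}, together with Heintze--Im Hof) to $\beta_1$ and to the segment of $\gamma$ preceding its entry, pinning both entry points near a common bounded region of $S$; tracking of the full $\beta$ by $\gamma$ plays no role at the horosphere itself.

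There is also a concrete error in your closing step: you invoke Lemma \ref{electric4.8} to bound $\dd_S(p_\beta,q_\beta)$ and $\dd_S(p_\gamma,q_\gamma)$, but in the situation of the present lemma---both paths penetrate $S$---these quantities are unbounded: already in $\HH^2$ a geodesic crossing deep through a horoball enters and exits at points arbitrarily far apart in the horospherical path metric. Farb's Lemma 4.8 concerns the case in which only one of $\beta$, $\gamma$ penetrates $S$ (the transcription in this paper is ambiguous on that point), so the version you need is false; and even if it were available, bounding each path's own entry-to-exit distance does not relate $\beta$'s entry point to $\gamma$'s, so it cannot absorb the missing estimate.
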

\indent Let $G$ be a finitely generated group and let $\Gamma$ be its Cayley graph. Let $\{H_{1},H_{2},...,H_{r}\}$ be a finite set of finitely generated subgroups of $G$. The coned-off Cayley graph of $G$ with respect to $\{H_{1},H_{2},...,H_{r}\}$ denoted by $\wh{\Gamma}=\wh{\Gamma}(\{H_{1},H_{2},...,H_{r}\})$ is formed as follows: for each $gH_{i}$, $1\leq i\leq r$, add a vertex $v(gH_{i})$ to $\Gamma$ and add an edge $e(gH_{i})$ of length $1/2$ from each $gh_{i}\in gH_{i}$ to the vertex $v(gH_{i})$.\\
\indent Following Farb \cite{Farb} We say that the group $G$ is hyperbolic relative to $\{H_{1},H_{2},...,H_{r}\}$ if the coned-off Cayley graph $\wh{\Gamma}$ of $G$ with respect to $\{H_{1},H_{2},...,H_{r}\}$ is a Gromov $\delta-$hyperbolic space for some $\delta$.
\begin{Theorem}(Farb \cite[Theorem 5.1]{Farb})
	Let $M$ be a complete, noncompact, finite volume Riemannian $n$-manifold with pinched negatively sectional curvature and denote by $\Gamma$ its fundamental group. Let $\{H_{1},H_{2},...,H_{r}\}$ be the cusp subgroups of $\Gamma$, then $\Gamma$ is hyperbolic relative to the set $\{H_{1},H_{2},...,H_{r}\}$ of cusp subgroups.
\end{Theorem}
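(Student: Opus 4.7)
The plan is to deduce the relative hyperbolicity of $\Gamma$ from the $\delta$-hyperbolicity of the electric space $\wh{Z}$ (Proposition 4.6 of Farb, cited in the excerpt), by transporting hyperbolicity across a quasi-isometry to the coned-off Cayley graph $\wh{\Gamma}$.

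First I would set up the geometric model. Since $M$ is complete, finite volume and pinched negatively curved, its universal cover $\til{M}$ is a pinched Hadamard manifold on which $\Gamma=\pi_1(M)$ acts freely, properly discontinuously and by isometries. Because each end of $M$ is parabolic, I can choose a $\Gamma$-invariant collection of pairwise disjoint open horoballs centered at the parabolic fixed points of the $H_i$, with $R>0$ a uniform lower bound on the distance between any two of them. Let $Z\subset \til{M}$ be the complement of the interiors of these horoballs, with the induced path metric. Then $\Gamma$ acts freely, properly discontinuously, cocompactly and by isometries on $Z$, each $H_i$ stabilizing a single horosphere in its orbit and acting cocompactly on it. By the Milnor--\v{S}varc lemma (with base point $x_0$ on a horospherical boundary component), the orbit map $f\colon \Gamma\to Z$, $\gamma\mapsto \gamma\cdot x_0$, is a $(P,C)$-quasi-isometry for some $P\geq 1$, $C\geq 0$.

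Next I would use the commutative diagram displayed in the excerpt to extend $f$ to a map $\wh{f}\colon \wh{\Gamma}\to \wh{Z}$ between the coned-off Cayley graph and the electric space, by sending each cone vertex $v(gH_i)$ to any point on the corresponding collapsed horosphere $gS_i$. The cocompactness of the $H_i$-action on $S_i$ implies that the orbit $H_i\cdot x_0$ is uniformly dense in a fixed neighbourhood of $S_i$, so each half-edge of length $1/2$ in $\wh{\Gamma}$ corresponds, up to a bounded error depending only on the cocompactness constant, to entering or exiting the collapsed horosphere in $\wh{Z}$. Checking both inequalities of the quasi-isometric embedding then amounts to splitting an electric path in $\wh{Z}$ into segments lying outside horospheres and entry/exit pairs at horospheres, and matching each such pair with a pair of edges through a cone vertex in $\wh{\Gamma}$ of total length $1$. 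This gives the explicit $(2RP,\,C+1)$-quasi-isometry mentioned in the excerpt. Since $\wh{Z}$ is $\delta$-hyperbolic, transferring hyperbolicity across $\wh{f}$ yields that $\wh{\Gamma}$ is $\delta'$-hyperbolic for some $\delta'>0$, which is by definition the relative hyperbolicity of $\Gamma$ with respect to $\{H_1,\dots,H_r\}$.

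The main obstacle is precisely this last transfer: $\wh{Z}$ is only a pseudometric space, so points on a single collapsed horosphere have mutual distance zero, and one cannot invoke the quasi-isometric invariance of Gromov hyperbolicity as a black box. To handle this I would either pass to the genuine metric quotient by identifying each horosphere to a point (and check that the map to $\wh{\Gamma}$ still distorts distances by a bounded amount using the cocompactness of $H_i$ on $S_i$), or work directly with the thin-triangle condition in the pseudo-setting, using Lemmas \ref{trackMgeodesics}--\ref{electric4.9} of Farb to control how electric quasi-geodesics enter, exit and fail to re-enter horospheres, so that degenerate fibers of $\wh{f}$ do not distort the geometry. Once the hyperbolicity of $\wh{\Gamma}$ is secured, the theorem follows immediately from the definition of relative hyperbolicity adopted in the excerpt.
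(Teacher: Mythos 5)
Your proposal is correct and follows essentially the same route as the cited source: the paper itself gives no proof of this statement (it is quoted from Farb), and Farb's own argument is precisely the one you outline, namely that the coned-off Cayley graph $\wh{\Gamma}$ is quasi-isometric to the electric space $\wh{Z}$ via $\wh{f}$ and that $\wh{Z}$ is $\delta$-hyperbolic (Farb's Proposition 4.6), so hyperbolicity transfers to $\wh{\Gamma}$. Your attention to the pseudometric subtlety in this transfer is the right point to flag and is handled adequately, so there is nothing substantively different from the standard proof.
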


\subsection{Nilpotent groups} 
A group $G$ is \textrm{solvable} if it has an abelian series, i.e., a series  $1=G_{0}\triangleleft G_{1}\triangleleft ...\triangleleft G_{n}=G$ in which each factor $G_{i+1}/G_{}i$ is abelian. A group $G$ is called nilpotent if it has a central series, i.e., a normal series $1=G_{0}\leq G_{1}\leq ...\leq G_{n}=G$ such that $G_{i+1}/G_{i}$ is contained in the center  of $G/G_{i}$ for all $i$. A nilpotent group is solvable. A group $G$ is said to be \textrm{polycyclic} if it has a cyclic series, i.e., if it has a series with cyclic factors. Polycyclic groups are solvable. Observe that if $G$ is nilpotent then G is polycyclic. In a polycyclic group $G$, the number of infinite factors in a cyclic series is independent of the series and hence it is an invariant of $G$, known as the Hirsch length \cite[5.4.13]{Robinson}. We will denote by $\hh(G)$ the Hirsch length of $G$, i.e., $\hh(G) := \# \{\textrm{ }i\textrm{ }|\textrm{ }  G_{i}/G_{i+1} \cong \mb{Z}\}$.\\
\indent We will use the following results about the Hirsch length in our proofs below.
\begin{Theorem}(Robinson \cite{Robinson})\label{Hirsch-suma}
	For a short exact sequence of polycyclic groups 
	$$1 \ra H \ra G \ra G/H \ra 1$$ 
	the following is true
	\begin{equation*}
		\hh(G)=\hh(H)+\hh(G/H).
	\end{equation*}
\end{Theorem}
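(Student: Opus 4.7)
The strategy is to concatenate a cyclic series for $H$ with the pull-back of a cyclic series for $G/H$ to produce a single cyclic series for $G$, and then just count infinite factors. The one nontrivial input is the invariance of the Hirsch length under change of cyclic series, which is stated right before the theorem and which I would cite from Robinson rather than reprove.

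Concretely, first I would pick a cyclic series $1 = H_0 \triangleleft H_1 \triangleleft \cdots \triangleleft H_m = H$ of the polycyclic group $H$ in which exactly $\hh(H)$ of the successive quotients $H_{i+1}/H_i$ are infinite cyclic and the rest are finite cyclic. Similarly I would pick a cyclic series $1 = \bar{K}_0 \triangleleft \bar{K}_1 \triangleleft \cdots \triangleleft \bar{K}_n = G/H$ whose number of infinite factors is $\hh(G/H)$. Let $\pi \co G \ra G/H$ denote the projection and set $K_j := \pi^{-1}(\bar{K}_j)$. By the subgroup correspondence one has $K_j \triangleleft K_{j+1}$, and by the third isomorphism theorem $K_{j+1}/K_j \cong \bar{K}_{j+1}/\bar{K}_j$, which is cyclic of the same order (finite or infinite). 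Splicing at $H = K_0$ gives the cyclic series
\[
1 = H_0 \triangleleft H_1 \triangleleft \cdots \triangleleft H_m = K_0 \triangleleft K_1 \triangleleft \cdots \triangleleft K_n = G
\]
of $G$, whose infinite factors are precisely the $\hh(H)$ infinite factors coming from the $H$-block together with the $\hh(G/H)$ infinite factors coming from the $G/H$-block.

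Finally, because $\hh$ is an invariant of the polycyclic group (independent of the chosen cyclic series), the count above computes $\hh(G)$, giving $\hh(G) = \hh(H) + \hh(G/H)$. The main (and essentially only) obstacle is establishing that invariance in the first place; this is the content of the Schreier-type refinement theorem for polycyclic series, which guarantees that any two cyclic series have isomorphic refinements, so the multiset of infinite cyclic factors (and in particular its cardinality) depends only on $G$. Since the excerpt explicitly invokes Robinson \cite{Robinson} for the well-definedness of $\hh$, I would invoke the same reference and not reprove it here.
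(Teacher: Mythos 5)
Your argument is correct: pulling back a cyclic series of $G/H$ along $\pi$ and splicing it onto a cyclic series of $H$ at $H=K_0$ does produce a cyclic series of $G$, the third isomorphism theorem identifies the upper factors with those of $G/H$, and counting infinite factors gives $\hh(H)+\hh(G/H)$, with the independence of that count from the chosen series supplied by Robinson (5.4.13). Note that the paper itself offers no proof of this statement at all --- it is quoted directly from Robinson --- so your splice-and-count argument is simply the standard proof of the cited fact, and your decision to invoke Robinson only for the well-definedness of $\hh$ is consistent with (indeed slightly more self-contained than) the paper's treatment.
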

\begin{Lemma}(Robinson \cite{Robinson})\label{Hirsch-subgrupos}
	If $H\leq G$ are polycyclic groups, then $\hh(H)\leq \hh(G)$.
\end{Lemma}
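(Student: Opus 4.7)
The plan is to start from a cyclic series witnessing $\hh(G)$ and intersect it with $H$ to produce a cyclic series for $H$ whose infinite factors are a subset of those of $G$. Concretely, fix a series $1 = G_{0} \triangleleft G_{1} \triangleleft \cdots \triangleleft G_{n} = G$ in which every quotient $G_{i+1}/G_{i}$ is cyclic, so that $\hh(G)$ equals the number of indices $i$ with $G_{i+1}/G_{i} \cong \mathbb{Z}$. Setting $H_{i} := H \cap G_{i}$, I would first check that $H_{i} \triangleleft H_{i+1}$ (which is immediate because $G_{i} \triangleleft G_{i+1}$) and that $H_{0} = 1$, $H_{n} = H$, giving a subnormal series in $H$.

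Next, the key identification is the second isomorphism theorem: the map $h \mapsto hG_{i}$ induces an injection
\begin{equation*}
H_{i+1}/H_{i} \;=\; (H \cap G_{i+1})/(H \cap G_{i}) \;\hookrightarrow\; G_{i+1}/G_{i}.
\end{equation*}
Since subgroups of a cyclic group are cyclic, each factor $H_{i+1}/H_{i}$ is cyclic, so the series for $H$ is itself a cyclic series. Moreover, whenever $G_{i+1}/G_{i}$ is finite, $H_{i+1}/H_{i}$ is finite as well; and whenever $G_{i+1}/G_{i} \cong \mathbb{Z}$, the quotient $H_{i+1}/H_{i}$ is either trivial or infinite cyclic.

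To finish, I would combine this with Theorem \ref{Hirsch-suma}. An easy induction on the length of the series, applied to the short exact sequences $1 \to H_{i} \to H_{i+1} \to H_{i+1}/H_{i} \to 1$, shows that $\hh(H)$ equals the number of indices $i$ for which $H_{i+1}/H_{i} \cong \mathbb{Z}$. By the previous paragraph this count is bounded above by the number of indices $i$ with $G_{i+1}/G_{i} \cong \mathbb{Z}$, which is $\hh(G)$. Hence $\hh(H) \leq \hh(G)$, as desired.

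There is no real obstacle here beyond the bookkeeping; the only point that deserves a moment of care is confirming that the intersected series is genuinely subnormal and that the injection into $G_{i+1}/G_{i}$ really does send infinite cyclic factors to infinite cyclic factors and finite factors to finite factors, which is exactly what the second isomorphism theorem plus the classification of subgroups of $\mathbb{Z}$ provides.
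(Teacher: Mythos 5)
Your proof is correct: intersecting a cyclic series of $G$ with $H$, embedding each factor $(H\cap G_{i+1})/(H\cap G_{i})$ into $G_{i+1}/G_{i}$ via the second isomorphism theorem, and counting the infinite cyclic factors (using Theorem \ref{Hirsch-suma} inductively, or simply the invariance of the Hirsch number) gives exactly $\hh(H)\leq \hh(G)$. The paper itself offers no proof of this lemma, citing Robinson instead, and your argument is the standard one found there, so there is nothing to reconcile.
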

Let $G$ be a group with torsion free, finite index subgroups, then all such subgroups have the same cohomological dimension. This common dimension is called the virtual cohomological dimension of $G$ and it is denoted by $\vcd (G)$. 
\begin{Theorem}(Brown \cite{Brown})\label{Hirsch-vcd}
	If $G$ is a polycyclic group, then $\hh(G)=\vcd(G)$, where $\vcd$ denotes the cohomological virtual dimension. Let $M$ be closed aspherical manifold of dimension $n$ and $G=\pi_{1}(M)$. Then $\vcd(G)=n$.
\end{Theorem}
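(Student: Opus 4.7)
The plan is to treat the two assertions separately. The first follows by induction on the Hirsch length, combined with the Lyndon--Hochschild--Serre spectral sequence and the standard fact that a polycyclic group has a torsion-free subgroup of finite index. The second is a direct application of Poincar\'e duality once asphericity is used to identify group cohomology with singular cohomology.

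First I would reduce the polycyclic case to torsion-free polycyclic groups: any polycyclic $G$ contains a torsion-free subgroup $G'$ of finite index, and $\hh(G)=\hh(G')$ by Theorem \ref{Hirsch-suma} applied to $1 \to G' \to G \to G/G' \to 1$ together with $\hh(G/G')=0$, while by definition $\vcd(G)=\op{cd}(G')$. For torsion-free polycyclic $G$ with $\hh(G)=n>0$, I would use a cyclic series with infinite cyclic factors (finite factors being ruled out by torsion-freeness) to obtain a short exact sequence $1 \to N \to G \to \mb{Z} \to 1$ with $\hh(N)=n-1$ by Theorem \ref{Hirsch-suma}. By induction $\op{cd}(N)=n-1$, and since $\op{cd}(\mb{Z})=1$, the Lyndon--Hochschild--Serre spectral sequence $H^p(\mb{Z};H^q(N;-))\Rightarrow H^{p+q}(G;-)$ yields $\op{cd}(G)\leq n$. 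For the matching lower bound, I would use that $G$ acts freely, properly, and cocompactly on a contractible manifold of dimension $n$ (the standard realization of torsion-free polycyclic groups as fundamental groups of closed aspherical $n$-manifolds), so that the orientation module produces a non-trivial class in $H^n(G;-)$.

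For the second assertion, asphericity makes $M$ a $K(\pi_1(M),1)$, and consequently $H^{*}(\pi_1(M);A)\cong H^{*}(M;A)$ for every $\mb{Z}[\pi_1(M)]$-module $A$. Since $\pi_1(M)$ acts freely on $\til{M}$ it is torsion-free, hence $\vcd(\pi_1(M))=\op{cd}(\pi_1(M))$. The upper bound is immediate from the vanishing of $H^k(M;A)$ for $k>n$, and the lower bound follows from Poincar\'e duality with coefficients in the orientation local system, which supplies a non-vanishing class in degree $n$ regardless of whether $M$ is orientable.

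The main obstacle is the lower bound $\vcd(G)\geq \hh(G)$ in the polycyclic case, since the spectral sequence only delivers the upper bound. One route is the geometric realization invoked above, after which the second assertion closes the loop; a more self-contained alternative is to verify inductively that the orientation character of the abelianization of $G$ yields a module $V$ with $H^n(G;V)\neq 0$, using the five-term exact sequence at each step and the base computation for $\mb{Z}$.
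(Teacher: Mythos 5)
The paper itself offers no proof of this statement: it is quoted from Brown's book (with the Hirsch-length facts taken from Robinson), so the comparison can only be with the standard arguments in the literature. Your treatment of the second assertion is the standard one and is correct: asphericity identifies $H^{*}(\pi_1(M);A)$ with the cohomology of $M$ with local coefficients, freeness of the deck action gives torsion-freeness so $\vcd=\op{cd}$, the upper bound is the dimension of $M$, and the lower bound is Poincar\'e duality with coefficients in the orientation module.

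The polycyclic half, however, has a genuine gap. You assert that for a torsion-free polycyclic $G$ the finite factors of a cyclic series are ``ruled out by torsion-freeness'' and you base the induction on a resulting extension $1\to N\to G\to \Z\to 1$. Neither claim holds in general: a given cyclic series of a torsion-free group can have finite factors (e.g.\ $1\lhd 2\Z\lhd \Z$ has factor $\Z/2$), and, more seriously, a torsion-free polycyclic group need not surject onto $\Z$ at all. The Hantzsche--Wendt Bieberbach group, the fundamental group of the closed flat $3$-manifold with holonomy $\Z/2\times\Z/2$, is torsion-free and polycyclic but has finite abelianization, so it is not poly-(infinite cyclic) and your inductive step --- as well as the ``more self-contained alternative'' via the five-term sequence, which needs the same quotient $G\to\Z$ --- cannot start. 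The repair is standard: by Hirsch's theorem every polycyclic group has a poly-(infinite cyclic) subgroup of finite index, and both $\hh$ and $\vcd$ are unchanged under passage to finite-index subgroups (the latter by Serre's theorem on the cohomological dimension of finite-index subgroups of torsion-free groups), so the induction should be run on such a subgroup; alternatively, a finite top factor in the series can be absorbed by Serre's theorem directly. Note also that your lower bound already invokes the realization of torsion-free polycyclic groups as fundamental groups of closed aspherical manifolds (Mostow, Auslander--Johnson); granting that theorem, the second assertion alone yields $\op{cd}=\hh$ for such groups, and the spectral-sequence induction becomes superfluous, whereas Bieri's algebraic proof that these groups are Poincar\'e duality groups of dimension $\hh(G)$ is the route that avoids the geometric input.
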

As a consequence of this Theorem, we have that $\hh(M)=n$ when $M$ is a infranilmanifold. 
\begin{Proposition}(Serre \cite[Proposition 27]{Serre})\label{Nilpotent-groups-serre}
	Let $G$ be a finitely generated nilpotent group acting on a tree $X$. Then only the following mutually exclusive cases are possible:
	\begin{enumerate}
		\item $G$ has a fixed point.
		\item There is a straight path $T$ stable under $G$ on which $G$ acts by translations by means of a non-trivial homomorphism $G\longrightarrow \mathbb{Z}$.
	\end{enumerate}
\end{Proposition}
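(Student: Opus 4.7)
The strategy is induction on the nilpotency class $c$ of $G$, using the standard dichotomy that each isometry of a tree is either elliptic (fixes some point) or hyperbolic (translates along a unique axis $A_g$). Three elementary facts will be invoked throughout. First, if $h$ centralizes a hyperbolic element $g$ with axis $A_g$, then $h(A_g)=A_{hgh^{-1}}=A_g$, so $h$ preserves $A_g$. Second, an elliptic isometry preserving a nonempty subtree $Y$ fixes a point of $Y$, since the closest-point projection to $Y$ of any global fixed point lies in $Y$ and is itself fixed. Third, any finitely generated subgroup $N\leq G$ consisting entirely of elliptic elements has nonempty common fixed-point set: the fixed subtrees of a finite generating set pairwise intersect (by the previous two facts), and Helly's theorem for subtrees of a tree then produces a common point.

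\textbf{Base case} ($c=1$, $G$ finitely generated abelian). If every generator is elliptic, the third fact yields a common fixed point of $G$, giving case (1). Otherwise some generator $g_i$ is hyperbolic with axis $A$; every $h\in G$ centralizes $g_i$ and hence preserves $A$. Since the isometries of the line $A$ are translations and reflections, and a nontrivial reflection cannot commute with the nontrivial translation $g_i|_A$, every element of $G$ acts as a translation on $A$. Signed translation lengths (after barycentric subdivision, if needed, to remove inversions) then define a homomorphism $G\to\mathbb{Z}$, nontrivial because $g_i$ has nonzero translation length, placing us in case (2).

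\textbf{Inductive step} ($c\geq 2$). Let $Z=Z(G)$, which is nontrivial and finitely generated because a finitely generated nilpotent group is polycyclic and hence Noetherian. If $Z$ contains a hyperbolic element $z$, then $G$ centralizes $z$ and preserves $A_z$, and the base-case reflection-versus-translation argument applies verbatim with $z$ in place of $g_i$, putting us in case (2). Otherwise every element of $Z$ is elliptic, so $Y:=\Fix(Z)$ is a nonempty subtree by the third fact, and $Y$ is $G$-invariant because $Z$ is normal in $G$. The action of $G$ on $Y$ factors through $G/Z$, a finitely generated nilpotent group of class $c-1$. Applying the inductive hypothesis to $G/Z$ acting on $Y$ yields either a $G/Z$-fixed point of $Y$ (also $G$-fixed) or a $G/Z$-invariant line in $Y$ with translation action given by a nontrivial homomorphism $G/Z\to\mathbb{Z}$; composing with $G\to G/Z$ gives the required data for $G$. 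Mutual exclusivity of (1) and (2) is immediate, since a line on which $G$ acts by nontrivial translations contains no global fixed point.

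\textbf{Main obstacle.} The delicate point is the elliptic-center case of the inductive step, where the induction is applied to the action of $G/Z$ on the subtree $Y=\Fix(Z)$ rather than on $X$; this hinges on $\Fix(Z)$ being nonempty, which in turn requires $Z$ to be finitely generated so that the Helly argument applies—precisely the role of polycyclicity of finitely generated nilpotent groups. A secondary subtlety is securing $\mathbb{Z}$-valued rather than $\mathbb{R}$-valued translation lengths; this is arranged by assuming, after subdivision if necessary, that the action on the simplicial tree is without inversions, so that vertices map to vertices and translation lengths are integers.
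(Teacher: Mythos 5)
The paper does not actually prove this proposition: it is quoted verbatim from Serre's \emph{Trees} (Proposition 27) and used as a black box, so there is no internal proof to compare against. Your argument is a correct, self-contained proof, and it follows the classical line (elliptic/hyperbolic dichotomy, invariance of the axis of a central hyperbolic element, reduction modulo the center) rather than anything specific to this paper; the induction on nilpotency class, the use of polycyclicity to get a finitely generated center, the passage to the $G$-invariant subtree $\Fix(Z)$, and the handling of inversions by subdivision are all sound, and mutual exclusivity is addressed. One small imprecision: your ``third fact'' (a finitely generated subgroup all of whose elements are elliptic has a global fixed point) is justified ``by the previous two facts,'' but those facts only give pairwise intersection of the fixed trees of \emph{commuting} elliptic elements, since for non-commuting $g,h$ there is no reason for $h$ to preserve $\Fix(g)$; the general statement requires the standard lemma that the product of two elliptic isometries with disjoint fixed trees is hyperbolic. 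This does not create a gap in your proof, because you only ever apply the fact to abelian subgroups (to $G$ itself in the base case and to the center $Z$ in the inductive step), where the commuting argument you describe works verbatim, but you should either restrict the statement to commuting families or cite the product-of-elliptics lemma.
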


\subsection{Injections of fundamental groups}
Denote by $c_{i}$ the number of boundary components of each piece $Z_{i}$, i.e., $c_{i}=\#\pi_{0}(\partial Z_{i})$.  We will denote the set of disjoint boundary components of $Z_{i},Z_{j}$ as follows:
\begin{align*}
	\partial Z_{i}=\coprod\limits_{\alpha=1}^{c_{i}} W_{\alpha}^{i}\\  
	\partial Z_{j}=\coprod\limits_{\alpha=1}^{c_{j}} W_{\alpha}^{j}.
\end{align*}
\indent We denote the gluing diffeomorphism between the pieces $Z_{i},Z_{j}$ by $\dd_{ij,k}$, where the index $k$ denotes the $k-$th wall of the piece $Z_{i}$, that is, $\dd_{ij,k}:W_{k}^{i}\rightarrow W_{\dd(k)}^{j}$. We sometimes omit the index $k$ when it is clear from context. This diffeomorphism induces an isomorphism between the fundamental groups,
\begin{equation*}
	\dd_{ij_{\star}}:\pi_{1}(W^{i})\rightarrow \pi_{1}(W^{j}).
\end{equation*}
\indent An end of $V_{i}$ will be denoted by $E_{i,j}$.\\
\indent  A well known result from Eberlein \cite{Eberlein} tell us the following. Each $V_{i}$ can be retracted to $B_{i}=V_{i}-\cup _{i=1}^{k} S_{i}$. Here $S_{i}$ are non-maximal horospheres removed from $V_{i}$. Then, for each end $E_{i}$ of $V_{i}$ the map  $\pi_{1}(E_{i})\rightarrow \pi_{1}(B_{i})$ is injective. We have the following commutative diagram:
	\[\begin{tikzcd}
	0 \arrow{r}{} & \pi_{1}(N_{i}) \arrow{r}{} \arrow[hook]{d}{} & \pi_{1}(W^{ij}) \arrow{r}{} \arrow[hook]{d}{} & \pi_{1}(E_{i}) \arrow{r}{} \arrow[hook]{d}{}& 0 \\
	0 \arrow{r}{} & \pi_{1}(N_{i}) \arrow{r}{}  & \pi_{1}(Z_{i}) \arrow{r}{} & \pi_{1}(B_{i}) \arrow{r}{} & 0 
	\end{tikzcd}
	\]
\indent Therefore the map $\pi_{1}(W^{ij})\hookrightarrow \pi_{1}(Z_{i})$ is injective. As a consequence of this, the fundamental group of a higher graph manifold $M$ constructed as in Definition \ref{HGM} is isomorphic to the fundamental group of a graph of groups $\mathcal{G}_{M}$. The vertex groups are the fundamental groups of the pieces $\pi_{1}(Z_{i})$ and the edge groups are $\pi_{1}(W^{ij})$ (see \cite{HP,FLS}).

\subsection{Uniformly exponential growth of the fundamental group} Let $\Gamma$ be a group finitely generated by the finite set of generators $S$. The word length $l_{S}(\gamma)$ of an element $\gamma\in \Gamma$ is defined as the smallest integer $n$ for which there exist $s_{1},s_{2},...,s_{n}\in S\cup S^{-1}$ such that $\gamma=s_{1}s_{2}\cdots s_{n}$.  The \textbf{word metric} $\dd_{S}(\gamma_{1},\gamma_{2})$ is defined as the length $l_{S}(\gamma_{1}^{-1}\gamma_{2})$. With this metric the group $\Gamma$ is a metric space.
\begin{Proposition} (Milnor-Svar$\check{\textrm{c}}$ Lemma, \cite{Harpe})\label{lemaMS} 
	Let $X$ be a proper, geodesic metric space. Let $\Gamma$ be a group acting by isometries from the left on $X$. Assume that the action is proper and the quotient $\Gamma\backslash X$ is compact. Then the group $\Gamma$ is finitely generated and quasi-isometric to $X$. Moreover, for every point $x_{0}\in X$, the map $\Gamma\rightarrow X$ given by $\gamma\mapsto \gamma x_{0}$, is a quasi-isometry.
\end{Proposition}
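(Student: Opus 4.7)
The plan is to fix a base point $x_0\in X$ and exploit the cocompactness of the action to produce a concrete finite generating set $S$ for $\Gamma$, then compare the word metric $\dd_S$ on $\Gamma$ with the restriction of $\dd$ to the orbit $\Gamma x_0$. Since $\Gamma\backslash X$ is compact, I would first choose $R>0$ with $\Gamma\cdot \overline{B}(x_0,R)=X$, so that every point of $X$ lies within $R$ of some orbit point. I would then set $S=\{\gamma\in\Gamma\setminus\{e\}:\dd(x_0,\gamma x_0)\le 2R+1\}$. Because $X$ is proper, the closed ball $\overline{B}(x_0,2R+1)$ is compact, and since the $\Gamma$-action is proper only finitely many translates $\gamma x_0$ meet this compact set, forcing $S$ to be finite.

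Next I would show that $S$ generates $\Gamma$ while simultaneously bounding word length. Given $\gamma\in\Gamma$, let $c:[0,L]\to X$ be a geodesic from $x_0$ to $\gamma x_0$ of length $L=\dd(x_0,\gamma x_0)$, available because $X$ is a geodesic space. Subdividing $[0,L]$ as $0=t_0<t_1<\cdots<t_n=L$ with successive gaps at most $1$ (so $n\le \lceil L\rceil$), for each $i$ I pick $\gamma_i\in\Gamma$ with $\dd(c(t_i),\gamma_i x_0)\le R$ and with $\gamma_0=e$, $\gamma_n=\gamma$. The triangle inequality then yields $\dd(\gamma_i x_0,\gamma_{i+1} x_0)\le 2R+1$, so that $\gamma_i^{-1}\gamma_{i+1}\in S\cup\{e\}$; telescoping $\gamma=\prod_i(\gamma_i^{-1}\gamma_{i+1})$ expresses $\gamma$ as a product of at most $\lceil L\rceil$ elements of $S$. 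This shows $\Gamma=\langle S\rangle$ and gives the bound $\dd_S(e,\gamma)\le \dd(x_0,\gamma x_0)+1$.

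For the reverse estimate, setting $D=\max_{s\in S}\dd(x_0,sx_0)$ (finite because $S$ is), the triangle inequality together with the fact that $\Gamma$ acts by isometries yields $\dd(x_0,\gamma x_0)\le D\cdot \dd_S(e,\gamma)$. The two inequalities show that $\gamma\mapsto \gamma x_0$ is a quasi-isometric embedding of $(\Gamma,\dd_S)$ into $(X,\dd)$, and the $R$-density of the orbit $\Gamma x_0$ upgrades this embedding to a quasi-isometry, which in turn gives the quasi-isometry between $\Gamma$ and $X$. The step I expect to require the most care is the finiteness of $S$, as this is the unique point in the argument where the properness of $X$ and the properness of the $\Gamma$-action are used in combination; once $S$ is finite, the geodesic-subdivision chain and the orbit-density check are routine consequences of the triangle inequality.
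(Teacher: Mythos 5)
Your proof is correct: the chain-of-orbit-points argument along a geodesic, with $S=\{\gamma\neq e:\dd(x_0,\gamma x_0)\le 2R+1\}$ made finite by properness of $X$ and of the action, is the standard proof of the Milnor--Svar\v{c} Lemma and is exactly the argument in the cited source \cite{Harpe}. The paper itself does not prove this proposition but only quotes it, so there is nothing further to compare; your write-up fills that citation correctly (the only cosmetic point is the trivial case $S=\emptyset$, where $\Gamma$ is trivial and $X$ compact, which you may dispose of in one line).
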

A useful consequence of this Lemma is that if $M$ is a compact Riemannian manifold with Riemannian universal covering $\widetilde{M}$, then the fundamental group of $M$ is quasi-isometric to $\widetilde{M}$. \\
\indent The \textbf{growth function} $\beta(\Gamma,S;k)$ is the number of elements $\gamma\in \Gamma$ such that $l_{S}(\gamma)\leq k$. The growth type of the pair $(\Gamma,S)$ is classified as follows:
\begin{enumerate}
	\item The group $\Gamma$ is of exponential growth, if there exist constants $A,B> 0$ such that for $n\geq 0$ the growth function satisfies $\beta(\Gamma, S; n) \geq A \exp ^{Bn}$. 
	\item The group $\Gamma$ has polynomial growth, if there exist constants $d,c$ such that for $n\geq 0$ the growth function satisfies $\beta(\Gamma, S; n) \leq c n ^{d}$.
	\item The group $\Gamma$ is of intermediate growth, if it is neither of exponential nor of polynomial growth.
\end{enumerate}
\noindent The growth type of a finitely generated group is a quasi-isometry invariant, i.e., quasi-isometric finitely generated groups have the same growth type \cite[Corollary 6.2.6]{Loh}. If $\Gamma$ is a finitely generated nilpotent group, then $\Gamma$ has polynomial growth \cite[Theorem 6.3.6]{Loh}. \\
\noindent The type of exponential growth of the pair $(\Gamma,S)$ is $\omega(\Gamma,S)= \underset{k\rightarrow \infty}{\lim}\sqrt[k]{\beta(\Gamma,S;k)}$. Denote by $\omega(\Gamma)=\inf \{\omega(\Gamma,S): S \ \text{ is a finite generating set of } \Gamma \}$. We have the following classification:
\begin{enumerate}
	\item The group $\Gamma$ has exponential growth if $\omega(\Gamma,S)>1$.
	\item The group $\Gamma$ has subexponential growth if $\omega(\Gamma,S)=1$.
	\item The group $\Gamma$ has uniformly exponential growth if $\omega(\Gamma)>1$.
\end{enumerate}  

\indent A well known result of de la Harpe and Bucher \cite{Harpe-Bucher} states that if $C$ is a subgroup of two finitely generated groups $A,B$ and they satisfy the condition $([A:C]-1)([B:C]-1)\geq 2$ then the free product with amalgamation $A\ast_{C} B$ has uniformly exponential growth. Using this we obtain the following result (see \cite{HP}).
\begin{Lemma}\label{exp-growth} The fundamental group of a higher graph manifold $M$ as in Definition \ref{HGM} has uniformly exponential growth.
\end{Lemma}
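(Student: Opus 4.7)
The plan is to present $\pi_1(M)$ directly as an amalgamated free product $A *_{C} B$ (or an HNN extension in the non-separating case) to which the de la Harpe--Bucher criterion applies, with edge group $C$ equal to a wall subgroup of infinite index in both adjacent vertex groups. The whole argument then reduces to the classical fact that a cusp (parabolic) subgroup of a lattice in pinched negative curvature has infinite index.

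Choose any edge $e$ of $\mathcal{G}_M$, corresponding to a wall $W^{ij}$ identifying boundary components of pieces $Z_i, Z_j$. Standard Bass--Serre theory presents $\pi_1(M)$ either as $\pi_1(\mathcal{G}_1) *_{\pi_1(W^{ij})} \pi_1(\mathcal{G}_2)$, where $\mathcal{G}_1, \mathcal{G}_2$ are the two components of $\mathcal{G}_M \setminus e$ in the separating case, or as an HNN extension of $\pi_1(\mathcal{G}_M \setminus e)$ with associated subgroup $\pi_1(W^{ij})$ in the non-separating case. It therefore suffices to verify that $C=\pi_1(W^{ij})$ has infinite index in the relevant vertex groups. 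Through the tower of inclusions $C \leq \pi_1(Z_i) \leq \pi_1(\mathcal{G}_1)$ (and the corresponding one for $Z_j$), this reduces to showing $[\pi_1(Z_i):C]=\infty$, since enlarging the ambient group can only enlarge the index.

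The commutative diagram of the previous subsection identifies $[\pi_1(Z_i):C]$ with $[\pi_1(B_i):\pi_1(E_i)]$, where $E_i$ is a cusp end of $V_i$. Since $\pi_1(B_i)=\pi_1(V_i)$ is a non-elementary lattice in a pinched negatively curved Hadamard manifold, its parabolic cusp subgroup $\pi_1(E_i)$ is a proper almost nilpotent stability subgroup, hence of infinite index by the Eberlein structure theory recalled in Section \ref{section2}. Both index factors in $([A:C]-1)([B:C]-1)$ are therefore infinite, and the de la Harpe--Bucher theorem of \cite{Harpe-Bucher} (together with its standard HNN analogue) yields $\omega(\pi_1(M))>1$. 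The principal obstacle I anticipate is not any single estimate but the combinatorial step of realising $\pi_1(M)$ as a single amalgam or HNN extension of the correct form rather than transferring uniform growth from a subgroup; this is handled cleanly via the spanning-tree-and-loop presentation of the graph of groups $\mathcal{G}_M$, and avoids the delicate question of inheritance of uniform exponential growth by arbitrary finitely generated overgroups.
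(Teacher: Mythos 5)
Your proof is correct, and it rests on the same key external result as the paper --- the de la Harpe--Bucher theorem on amalgams and HNN extensions of uniformly exponential growth \cite{Harpe-Bucher} --- but you organize the splitting differently. The paper argues by induction on the pieces, repeatedly applying the amalgamation criterion to $\pi_{1}(Z_{i})\ast_{\pi_{1}(W_{i,j})}\pi_{1}(Z_{j})$ and then adjoining one piece at a time, whereas you split $\pi_{1}(M)$ once, over a single edge of $\mathcal{G}_{M}$, as an amalgam or (for a non-separating edge) an HNN extension obtained from a spanning tree. Your route has two genuine advantages: it explicitly covers graphs $\mathcal{G}_{M}$ with cycles, which the paper's piece-by-piece amalgamation handles only implicitly (strictly it needs HNN steps that are never mentioned), and it verifies the index hypothesis $([A:C]-1)([B:C]-1)\geq 2$ of \cite{Harpe-Bucher}, which the paper never checks: you reduce $[\pi_{1}(Z_{i}):\pi_{1}(W^{ij})]$ to $[\pi_{1}(B_{i}):\pi_{1}(E_{i})]$ via the two short exact sequences over the common fiber group $\pi_{1}(N_{i})$, and the latter index is infinite because the cusp subgroup is almost nilpotent while the lattice $\pi_{1}(V_{i})$ is not (it has exponential growth). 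The only point to state a bit more carefully is this last step --- ``proper almost nilpotent subgroup'' alone does not force infinite index; you need that the lattice itself is not almost nilpotent, e.g.\ by its exponential growth or non-elementarity --- and, likewise, the precise hypotheses of the HNN version of \cite{Harpe-Bucher}, which are comfortably met here since both associated copies of the wall group have infinite index in the base. The paper's induction buys brevity; your single-splitting argument buys completeness on exactly the points the paper glosses over.
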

\begin{proof}
	The proof is by induction on the pieces. Let $Z_{i},Z_{j}$ be two adjacent pieces and $W_{i,j}$ be their common wall. We know that the map $\pi_{1}(W_{i,j})\hookrightarrow \pi_{1}(Z_{i})$ is injective and the same for the map $\pi_{1}(W_{i,j})\hookrightarrow \pi_{1}(Z_{j})$. Then by the main Theorem of \cite{Harpe-Bucher} the free product with amalgamation $\pi_{1}(Z_{i})\ast_{\pi_{1}(W_{i,j})}\pi_{1}(Z_{j})$ has uniformly exponential growth. \\
	\indent Let $Z_{k}$ be another piece adjacent to $Z_{j}$ and $W_{j,k}$ their common wall. Observe that $\pi_{1}(W_{k})$ can be seen as a subgroup of $\pi_{1}(Z_{i})\ast_{\pi_{1}(W_{i,j})}\pi_{1}(Z_{j})$ so we can again consider the free product with amalgamation, apply the same result of \cite{Harpe-Bucher} and conclude the proof.  
\end{proof}
	
\section{Isomorphisms preserve pieces}\label{section3}
The objective of this section is to prove Theorem \ref{Isomorphism-preserve-pieces}. Throughout this section we will assume that $M$ is a higher graph manifold as in definition \ref{HGM}, and $\widetilde{M}$ is its universal covering.\\
\indent Let $M$ be a manifold constructed as in definition \ref{HGM}. We will now describe the universal cover of a higher graph manifold $M$.
\indent  Let $X$ be a graph with vertices $V(X)$ and oriented edges $E(X)$. Consider a group $G$ which acts on $X$. An \emph{inversion} is a pair consisting of an element $g\in G$ and an edge $x\in E(X)$ such that $gx=\bar{x}$, here $\bar{x}$ denotes the reverse orientation on $x$. If there is no such pair we say that $G$ acts without inversion. Remember that if $\mathcal{G}$ is a graph of groups, the fundamental group $\pi_{1}(\mathcal{G})$ acts without inversion on the Bass-Serre tree $T$ associated to $\mathcal{G}$ \cite[Section 5.4]{Serre}.\\ 
\indent Suppose that $(\widetilde{M},\phi,T)$ is a tree of spaces, where $\widetilde{M}$ is the universal cover of $M$ and $T$ is the Bass-Serre tree associated to the decomposition of $M$. 
\begin{Definition}\label{def-walls-chambers}
	\begin{enumerate}
		\item \label {def-wall} A wall of $\widetilde{M}$ is the closure of the pre-image under $\phi$ of the interior of an edge of $T$. We will denote by $\dd_{W}$ the path metric induced on $W$ by the restriction to $W$ of the Riemannian structure of $\widetilde{M}$.
		\item \label{def-chamber} A chamber $C\subseteq \widetilde{M}$ is the pre-image under $\phi$ of a vertex of $T$. We will denote by $\dd_{C}$ the path metric induced on $C$ by the restriction of the Riemannian structure of $\widetilde{M}$. 
	\end{enumerate}
\end{Definition}

We say that two chambers are adjacent if their corresponding vertices in $T$ are joined by an edge. A wall $W$ is adjacent to a chamber $C$ if $W\cap C\neq \emptyset$. If $W$ is a wall, then $W$ is adjacent to a chamber $C$ if and only if the vertex corresponding to $C$ is the end point of the edge corresponding to $W$.\\
\indent Every boundary component of $Z_{i}$ is $\pi_{1}-$injective in $Z_{i}$. Hence every piece and every boundary component of a piece is $\pi_{1}-$injective in $M$. In combination with the construction of a Bass-Serre tree of spaces we obtain the following result.
\begin{Corollary}\label{BStree}
	Let $M$ be a higher graph manifolds as in definition \ref{HGM}. If $\widetilde{M}$ is it universal cover and $(\til{M},\phi,T)$ is a tree of spaces, we have the following:
	\begin{enumerate}
		\item If $C$ is a chamber of $\til{M}$, then $C$ is homeomorphic to $\til{Z_{i}}\cong \til{B_{i}}\times\til{N_{i}}$. Here $\til{Z_{i}},\til{B_{i}},\til{N_{i}}$ are the universal covers of $Z_{i}$, $B_{i}$ and $N_{i}$, in that order.
		\item If $W$ is a wall of $\til{M}$, then $W$ is homeomorphic to $\partial\til{B_{i}}\times\til{N_{i}}$ or $\partial \til{B_{j}}\times\til{N_{j}}$.
	\end{enumerate}
\end{Corollary}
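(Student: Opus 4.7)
The plan is to combine the Bass-Serre theory of the graph-of-groups decomposition $\mathcal{G}_M$ with the observation that a fiber bundle over a contractible CW base must be trivial. First, using the $\pi_{1}$-injectivity of walls and pieces established in the previous subsection, Bass-Serre theory tells us that $\pi_{1}(M)$ acts without inversion on the Bass-Serre tree $T$ and that the universal cover $\til{M}$ admits a $\pi_{1}(M)$-equivariant projection $\phi:\til{M}\to T$ realising the standard tree-of-spaces model. In this model, the preimage of a vertex $v$ of type $i$ is a copy of $\til{Z_{i}}$, and the closure of the preimage of the interior of an edge of type $(ij)$ is a copy of $\til{W^{ij}}$. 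The corollary therefore reduces to identifying these universal covers with the claimed products.

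For (1), the piece $Z_{i}$ is the total space of a fiber bundle $N_{i}\to Z_{i}\to B_{i}$. Since $V_{i}$ is a complete, pinched negatively curved manifold, $\til{V_{i}}$ is a Hadamard manifold, and $\til{B_{i}}$ is obtained from $\til{V_{i}}$ by deleting a $\Ga_{i}$-equivariant family of open horoballs; in particular $\til{B_{i}}$ is a contractible smooth manifold with boundary. Pulling back the bundle $Z_{i}\to B_{i}$ along the covering map $\til{B_{i}}\to B_{i}$ yields a fiber bundle over the contractible base $\til{B_{i}}$, which is therefore trivial: $\til{B_{i}}\times_{B_{i}} Z_{i} \cong \til{B_{i}}\times N_{i}$. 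Taking universal covers on both sides, and using that $\til{B_{i}}$ is already simply connected, gives $\til{Z_{i}}\cong\til{B_{i}}\times\til{N_{i}}$, proving (1).

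For (2), a wall $W^{ij}\subset Z_{i}$ is the restriction of the bundle $Z_{i}\to B_{i}$ to one horoboundary component $H\subset\partial B_{i}$. The universal cover of $H$ is a single horosphere of $\til{V_{i}}$, that is, a connected component of $\partial\til{B_{i}}$, diffeomorphic to $\mb{R}^{n_{i}-1}$ and therefore contractible. Running the same trivialisation argument over this contractible base gives $\til{W^{ij}}\cong\partial\til{B_{i}}\times\til{N_{i}}$, where $\partial\til{B_{i}}$ denotes the relevant connected component of the horospherical boundary. The alternative identification $\partial\til{B_{j}}\times\til{N_{j}}$ simply reflects the fact that the same wall can also be regarded as a boundary component of the adjacent piece $Z_{j}$, the two descriptions being related by the gluing diffeomorphism $\dd_{ij,k}$.

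The only nontrivial ingredient, and the place I expect the main technical care to be needed, is the triviality of the pulled-back bundles. This reduces to the standard fact that every fiber bundle over a contractible paracompact base with a well-behaved structure group is isomorphic to a product; both $\til{B_{i}}$ and each horosphere in $\partial\til{B_{i}}$ are manifestly contractible, so the argument is routine once the Bass-Serre tree-of-spaces model has been set up.
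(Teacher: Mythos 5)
Your argument is correct and follows exactly the route the paper intends: the paper states this corollary without proof, as an immediate consequence of the $\pi_1$-injectivity of pieces and walls together with the standard Bass--Serre tree-of-spaces construction, which is precisely your first step. Your additional details (identifying vertex and edge spaces with $\til{Z_i}$ and $\til{W^{ij}}$, then trivialising the pulled-back bundles over the contractible spaces $\til{B_i}$ and the horospheres) are a sound filling-in of what the paper leaves implicit, so there is nothing to correct.
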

Let $(\til{M},\phi, T)$ be the tree of spaces given by Corollary \ref{BStree}. The tree $T$ is called the Bass-Serre tree of $\pi_{1}(M)$ with respect to the isomorphism $\pi_{1}(M)\cong \pi_{1}(\mathcal{G}_{M})$. Let $V(T)$ denote the vertices of $T$ and $E(T)$ denote its edges. The action of $\pi_{1}(M)$ on $\til{M}$ induces an action of $\pi_{1}(M)$ on $T$. The fundamental group of a piece of $M$ coincides with the stabilizer of a vertex of $T$, and the fundamental group of a wall $W_{ij}$ corresponds to the stabilizer of an edge of $T$.
\begin{lemma} \label{estabi-vertex}
	Let $M$ be a higher graph manifolds as in definition \ref{HGM} and $T$ the Bass-Serre tree associated to the decomposition of $M$. For every vertex $v\in V(T)$ and every edge $e\in E(T)$ we denote by $G_{v},G_{e}$ their stabilizers in $\pi_{1}(M)$, in that order. If $v$ is a vertex in $T$, then $v$ is the unique vertex which is fixed by $G_{v}$. 
\end{lemma}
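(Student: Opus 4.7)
The plan is to argue by contradiction. Suppose $G_v$ fixes some other vertex $v'\in V(T)$. Since $\pi_1(M)$ acts on $T$ without inversions, any element fixing two vertices of $T$ must fix pointwise the unique geodesic path between them: such an automorphism preserves the linear order on the path and, having both endpoints fixed, has no room for a non-trivial permutation of intermediate vertices, while absence of inversions upgrades vertex-fixing to edge-fixing. In particular, letting $e$ be the edge of this path incident to $v$, I obtain $G_v\subseteq G_e$. Combined with the standard inclusion $G_e\subseteq G_v$, this yields $G_v=G_e$.

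Next I identify the two groups and derive a growth contradiction. By Corollary \ref{BStree}, the wall $W$ corresponding to $e$ is homeomorphic to $\partial \til{B_i}\times \til{N_i}$, so $G_e\cong\pi_1(W)$ fits in a short exact sequence
\begin{equation*}
1\longrightarrow \pi_1(N_i)\longrightarrow \pi_1(W)\longrightarrow \pi_1(\partial B_i)\longrightarrow 1.
\end{equation*}
The kernel $\pi_1(N_i)$ is nilpotent by construction, and each horospherical cross-section $\partial B_i$ of a finite-volume pinched negatively curved manifold is a compact infranilmanifold, so $\pi_1(\partial B_i)$ is virtually nilpotent. Hence $\pi_1(W)$ is virtually nilpotent and, by Wolf's theorem, has polynomial growth.

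On the other hand, the fiber bundle $Z_i\to B_i$ with connected fiber $N_i$ yields a surjection $\pi_1(Z_i)\twoheadrightarrow \pi_1(B_i)\cong\pi_1(V_i)$, and $V_i$ is a non-compact, finite-volume, pinched negatively curved manifold of dimension $n_i\geq 2$. Its fundamental group is therefore non-elementary, contains a non-abelian free subgroup, and so has exponential growth; the surjection shows $G_v\cong\pi_1(Z_i)$ also has exponential growth. (Alternatively, one can extract this from Lemma \ref{exp-growth} applied to the vertex group itself.) This is incompatible with $G_v = G_e$, giving the required contradiction.

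The main obstacle is the tree-theoretic first step, namely upgrading ``fixes a second vertex'' to ``fixes an adjacent edge'': this is where one must be careful to use the no-inversions hypothesis recorded in the excerpt, since without it a group element could swap the two endpoints of an edge on the path from $v$ to $v'$ without fixing the edge pointwise. Once that step is in place, the growth-type comparison between a virtually nilpotent wall group and the exponentially growing piece group closes the argument cleanly.
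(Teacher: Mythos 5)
Your proof is correct and follows essentially the same route as the paper's: fixing a second vertex forces $G_v$ into the stabilizer of an edge at $v$, and that edge stabilizer is virtually nilpotent, hence of polynomial growth, contradicting the exponential growth of the vertex group. You fill in the details more explicitly (the pointwise-fixed path in the tree, virtual nilpotence of the wall group via the short exact sequence, and exponential growth via the surjection onto $\pi_1(B_i)$), where the paper simply cites Eberlein's Corollary 3.3 and Lemma \ref{exp-growth}, but the underlying argument is the same.
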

\begin{proof}
	Suppose that $G_{v}$ fixes another vertex $v'\neq v$, then $G_{v}$ fixes an edge $e$ that contains $v$. This implies that $G_{v}$ is contained in the stabilizer of the edge $e$. By Corollary 3.3 of \cite{Eberlein}, the stabilizers of edges are virtually nilpotent groups. By a well known result from Gromov we know that virtually nilpotent groups have polynomial growth \cite{Gromov}.\\
	As a consequence of Lemma \ref{exp-growth}, the stabilizer of a vertex has uniformly exponential growth. Therefore, the stabilizer of an edge can not be contained in the stabilizer of a vertex. We conclude that $G_{v}$ only fixes $v$. 
\end{proof}
\begin{lemma}\label{normalizar-vertex}
	Let $M$ be a higher graph manifolds as in definition \ref{HGM}. Let $Z_{1},Z_{2}$ be two pieces of $M$. Let $G_{i}<\pi_{1}(M)$ be groups conjugate to the fundamental groups  of $Z_{i}$ for $i=1,2$. Then:	\begin{enumerate}
		\item The normalizer of $G_{1}$ in $\pi_{1}(M)$ is equal to $G_{1}$.
		\item If $G_{1}$ is conjugate to $G_{2}$ in $\pi_{1}(M)$ then $Z_{1}$ is isometric to  $Z_{2}$.
	\end{enumerate}
\end{lemma}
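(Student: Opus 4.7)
The plan is to recast both statements in terms of the action of $\pi_1(M)$ on the Bass-Serre tree $T$ and then invoke the uniqueness result of Lemma \ref{estabi-vertex}. Recall that the vertex stabilizers in $T$ are exactly the conjugates of the subgroups $\pi_1(Z_i)$ sitting inside $\pi_1(M)$. Thus, for each $i=1,2$, there is a vertex $v_i \in V(T)$ with $G_i = G_{v_i}$, where $v_i$ projects onto the vertex of the graph of groups corresponding to $Z_i$.

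For (1), I would take an arbitrary $g \in N_{\pi_1(M)}(G_1)$ and argue that $gG_1g^{-1} = G_1$ forces $G_{g \cdot v_1} = gG_{v_1}g^{-1} = G_{v_1}$, so $G_1$ stabilizes both $v_1$ and $g\cdot v_1$. By Lemma \ref{estabi-vertex}, $v_1$ is the only vertex fixed by $G_1$, hence $g\cdot v_1 = v_1$ and thus $g \in G_{v_1} = G_1$. Combined with the trivial inclusion $G_1 \subseteq N(G_1)$, this gives the desired equality.

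For (2), suppose $G_1 = hG_2h^{-1}$ for some $h \in \pi_1(M)$. Then $h$, acting on $T$, sends $v_2$ to a vertex stabilized by $hG_2h^{-1} = G_1$; by Lemma \ref{estabi-vertex} again, this vertex must be $v_1$, so $h\cdot v_2 = v_1$. Because $\pi_1(M)$ acts by Riemannian isometries on $\til M$ and permutes chambers, the restriction of $h$ to the chamber $C_{v_2}$ is an isometry onto $C_{v_1}$, and it is equivariant with respect to the conjugation isomorphism $G_2 \to G_1$, $g \mapsto hgh^{-1}$. Passing to quotients, and using that $C_{v_i}/G_i$ is canonically isometric to $Z_i$ (with its induced Riemannian metric from $M$), this descends to a well-defined isometry $Z_2 \to Z_1$. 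There is no real obstacle here beyond carefully identifying $G_i$ with a vertex stabilizer and checking that the isometry of $\til M$ passes to the quotient; the entire argument rests on the uniqueness statement of Lemma \ref{estabi-vertex}, which itself relied on the contrast between uniform exponential growth of vertex stabilizers and polynomial growth of edge stabilizers.
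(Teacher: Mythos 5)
Your proposal is correct and follows essentially the same route as the paper: both parts identify $G_i$ with a vertex stabilizer of the Bass-Serre tree and then invoke the uniqueness of the fixed vertex from Lemma \ref{estabi-vertex}, first to force a normalizing element to fix $v_1$ (hence lie in $G_1$), and then to force the conjugating element to carry $v_2$ to $v_1$, so that the corresponding deck transformation restricts to an isometry between the chambers and descends to an isometry of the pieces. Your write-up just makes the Bass-Serre identification $G_i=G_{v_i}$ and the equivariance/descent step slightly more explicit than the paper does.
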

\begin{proof}
	We will consider the action of $\pi_{1}(M)$ on the Bass-Serre tree $T$.
	\begin{enumerate}
		\item Let $v_{1}\in V(T)$, by Lemma \ref{estabi-vertex}, $v_{1}$ is the only vertex fixed by $G_{v_{1}}$ and therefore fixed by $G_{1}$. Let $g\in \pi_{1}(M)$ and suppose that $g$ normalizes $G_{1}$. So, for all $x\in G_{1}$ there exists $y\in G_{1}$ such that $gx=yg$. This implies $gxv_{1}=ygv_{1}$, because  $xv_{1}=v_{1}$, thus $gv_{1}=ygv_{1}$. Therefore, $G_{1}$ fixes $gv_{1}$ and as $v_{1}$ is the only vertex fixed by $G_{1}$, then $gv_{1}=v_{1}$. Hence, $g\in G_{1}$.
		\item Suppose there exists $g$ such that $gG_{1}g^{-1}=G_{2}$ and let $v_{1},v_{2}$ be the vertices which are fixed by $G_{1}$ and $G_{2}$ in that order. As $G_{1}$ is conjugate to $G_{2}$, $G_{1}$ fixes $v_{2}$ and $g(v_{1})$, hence $v_{2}=g(v_{1})$. Therefore the covering automorphism $g:\tilde{M}\longrightarrow \tilde{M}$ sends the chamber covering $Z_{1}$ to the chamber covering $Z_{2}$, from which $Z_{1}$ is isometric to $ Z_{2}$. \qedhere
	\end{enumerate}
\end{proof}
Remember that the graph manifold $M$ is formed by a finite union of pieces $Z_{i}$ and that each one is defined by a fiber bundle with base $B_{i}$ and fiber $N_{i}$. We will say that an element $g\in\pi_{1}(M)$ corresponds to the fiber direction of $\pi_{1}(Z_{i})$ if $g$ for $\rho: \pi_{1}(M)\ra \pi_{1}(B_{i})$ the morphism induced by the projection $M\rightarrow B_{i}$, we have that $\rho(g)=e \in \pi_1(B_i)$. That is to say, it fixes the direction associated to the base of a piece of $Z_{i}$.
\begin{lemma}\label{estabilizador-paredes} Let $M$ be a higher graph manifolds as in definition \ref{HGM} and $T$ the Bass-Serre tree associated to the decomposition of $M$. Let $W_{1},W_{2}$ be two distinct walls of $\til{M}$ and let $v\in V(T)$ be a vertex so that every path which connects $W_{1}$ and $W_{2}$ intersects the chamber corresponding to $v$. If $g\in \pi_{1}(M)$ is such that  $g(W_{i})=W_{i}$ for $i=1,2$, then $g$ is an element that corresponds to the fiber direction in $\pi_{1}(Z_{i})$, here $Z_{i}$ is the piece of $M$ that corresponds to the vertex $v$ in $T$.
\end{lemma}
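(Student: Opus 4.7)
The plan is to combine the Bass–Serre tree action with the product structure of chambers from Corollary \ref{BStree} and the classification of isometries of a pinched Hadamard manifold.

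First, I would argue in the tree $T$. Let $e_{1},e_{2}\in E(T)$ be the edges corresponding to $W_{1},W_{2}$. Since $g$ stabilizes each $W_{i}$, it fixes each $e_{i}$, and because $\pi_{1}(M)$ acts on $T$ without inversion, the fixed set of $g$ is a connected subtree; hence $g$ fixes the whole geodesic in $T$ from $e_{1}$ to $e_{2}$. By the hypothesis on $v$, this geodesic runs through $v$, so $g$ fixes $v$ and therefore $g\in G_{v}$, a conjugate of $\pi_{1}(Z_{i})$. Let $\hat{e}_{1},\hat{e}_{2}$ be the two distinct edges of $T$ adjacent to $v$ that lie on this tree-geodesic; they are also fixed by $g$. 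Their corresponding walls $\hat{W}_{1},\hat{W}_{2}$ are adjacent to the chamber $C_{v}$ and are stabilized by $g$, so up to replacing $W_{1},W_{2}$ with $\hat{W}_{1},\hat{W}_{2}$ we may assume both walls are adjacent to $C_{v}$.

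Next, I would use the product description of the chamber. By Corollary \ref{BStree}(1), $C_{v}$ is homeomorphic to $\til{B_{i}}\times\til{N_{i}}$, and each wall adjacent to $C_{v}$ takes the form $H\times\til{N_{i}}$ with $H$ a boundary horosphere of $\til{B_{i}}$ (the boundary of a horoball removed from $\til{V_{i}}$). Writing $\hat{W}_{k}=H_{k}\times\til{N_{i}}$ for $k=1,2$, the fact that $\hat{W}_{1}\neq\hat{W}_{2}$ forces $H_{1}\neq H_{2}$; since each removed horoball is centered at a distinct parabolic fixed point of $\partial_{\infty}\til{V_{i}}$, the centers $p_{1},p_{2}$ of $H_{1},H_{2}$ are distinct. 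Let $\rho:\pi_{1}(Z_{i})\to\pi_{1}(B_{i})=\pi_{1}(V_{i})$ be the morphism induced by the bundle projection $Z_{i}\to B_{i}$. Then $\rho(g)$ acts on $\til{V_{i}}$ as an isometry preserving both horospheres $H_{1},H_{2}$, and therefore fixes the two distinct points $p_{1},p_{2}$ at infinity.

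Finally, I would invoke Eberlein's classification of isometries. The action of $\pi_{1}(V_{i})$ on the pinched Hadamard manifold $\til{V_{i}}$ is free, hence torsion-free, which rules out nontrivial elliptic elements. A parabolic element has a unique fixed point at infinity, contradicting $\rho(g)$ fixing $p_{1}\neq p_{2}$. A hyperbolic element has an axis with endpoints $p_{1},p_{2}$ and translates the Busemann function at either endpoint by its translation length, so it shifts every individual horosphere centered at $p_{k}$; this contradicts $\rho(g)(H_{k})=H_{k}$. Hence $\rho(g)=e$, so $g\in\ker(\rho)$, meaning $g$ corresponds to the fiber direction of $\pi_{1}(Z_{i})$.

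The main obstacle I expect is the last step, ruling out the hyperbolic case: one must verify carefully that an axial translation $\rho(g)$ shifts every individual horosphere at its two fixed points at infinity. This is standard from the behavior of the Busemann function under an axial translation, but it is the only step where the pinched negative curvature of $V_{i}$ is used essentially. The rest of the proof is a clean combination of standard Bass–Serre theory with the product description of chambers already provided by Corollary \ref{BStree}.
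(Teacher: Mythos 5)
Your proposal is correct, and its overall plan coincides with the paper's: reduce to the case that $g$ lies in the vertex stabilizer $G_{v}$ and stabilizes two distinct walls adjacent to the chamber $C_{v}$, project via $\rho$ to the base, and conclude $\rho(g)=e$ from the fact that it preserves two boundary horospheres of $\til{B_{i}}$ centered at distinct points at infinity. The difference is in how the last step is closed. The paper takes the bi-infinite geodesic $\gamma$ in the base asymptotic to the two horosphere centers and invokes Lemma \ref{geodesic bi inv} on stabilizers of bi-infinite geodesics, then argues that $\rho(g)$ must fix a point of $\gamma$ (implicitly using that a translation along $\gamma$ would shift the preserved horospheres) and hence, being of finite order in a free action, is the identity. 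You instead bypass the geodesic and apply Eberlein's elliptic/parabolic/hyperbolic trichotomy directly: freeness kills nontrivial elliptics, uniqueness of the parabolic fixed point kills parabolics, and the Busemann-shift of a hyperbolic along its axis kills hyperbolics. Your route has two advantages: it makes explicit, via the connectedness of the fixed subtree in $T$, the reduction that the paper only asserts (namely that $g$ stabilizes $C_{v}$ and two of its adjacent walls), and it avoids the somewhat opaque step in the paper where $\rho(g)\gamma=\gamma$ is said to yield a fixed point on $\gamma$; in your version the horosphere-preservation that really drives that step is used openly. The paper's route, in exchange, reuses its Lemma \ref{geodesic bi inv} and needs no appeal to the classification of isometries beyond freeness of the action. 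Both arguments are sound and rest on the same geometric fact about pinched Hadamard manifolds.
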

\begin{proof}
	Let $\til{Z_{i}} \subset \til{M}$ be the chamber associated to $v$ and denote the piece of $M$ corresponding to $\til{Z_{i}}$ by $Z_{i}$. Let $Z_{i}^{1},Z_{i}^{2}$ be the boundary components of $\partial (\til{Z_{i}})$ such that $g(\til{Z_{i}})=\til{Z_{i}}$, $g(Z_{i}^{1})=Z_{i}^{1}$ and $g(Z_{i}^{2})=Z_{i}^{2}$. Thus we have that $g\in G_{v}$. \\
	Recall that $G_{v}$ is identified with the fundamental group of the piece corresponding to $v$, i.e., with $\pi_{1}(Z_{i})$, and that $\til{Z_{i}}\cong \til{B_{i}}\times \til{N_{i}}$. Let $\rho: \pi_{1}(M)\ra \pi_{1}(B_{i})$ be the morphism induced by the projection $M\rightarrow B_{i}$. The boundary components $\til{Z_{i}}$ are in bijection with the boundary components of $\til{B_{i}}$. \\
	Let $\gamma$ be a bi-infinite geodesic that is completely contained in $B_{i}$. By definition the stabilizer of $\gamma$ satisfies  $\Stab(\gamma)=\gamma$, then by Lemma \ref{geodesic bi inv}, $\Stab(\gamma)\leq \mathbb{R}\times H$. If $\rho(g)\gamma=\gamma$ then there exists $p\in \gamma$ such that $\rho(g)p=p$. This implies $|\rho(g)|<\infty$ and therefore $\rho(g)=\textrm{Id}$. We conclude that $g\in\Ker(\rho)$.
\end{proof}
\begin{lemma} \label{unicidad-estabilizador-paredes}
	Let $M$ be a higher graph manifolds as in definition \ref{HGM} and $T$ the Bass-Serre tree associated to the decomposition of $M$. Let $W$ be a wall in $\til{M}$ and $H$ its stabilizer in $\pi_{1}(M)$. Then $W$ is the unique wall which is stabilized by $H$.
\end{lemma}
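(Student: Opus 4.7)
The plan is to argue by contradiction. Assume that $H$ also stabilizes a wall $W' \neq W$, and let $e$ and $e'$ be the distinct edges of the Bass-Serre tree $T$ corresponding to $W$ and $W'$. Since $\pi_1(M)$ acts on $T$ without inversions, $H$ fixes both $e$ and $e'$ pointwise, and hence fixes every vertex on the unique $T$-geodesic joining them.

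The next step is to locate a vertex $v$ at which two distinct $H$-fixed edges are incident. Taking $v$ to be the endpoint of $e$ that lies on this geodesic, one gets a second edge $\epsilon$ at $v$ which is either $e'$ itself (if $e$ and $e'$ share $v$) or the first edge of the $T$-geodesic from $v$ toward $e'$; in either case $\epsilon \neq e$ and $H$ fixes $\epsilon$. Let $W''$ be the wall corresponding to $\epsilon$. Both $W$ and $W''$ are boundary components of the chamber $\til{Z_v}$, so every path in $\til{M}$ from a point of $W$ to a point of $W''$ intersects $\til{Z_v}$ (both endpoints already lie in $\partial \til{Z_v} \subseteq \til{Z_v}$). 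The hypothesis of Lemma \ref{estabilizador-paredes} is therefore met, and so every element of $H$ corresponds to the fiber direction in $\pi_1(Z_v)$; that is, $H$ is contained in $\pi_1(N_v)$ inside $\pi_1(Z_v)$.

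To conclude I would compare Hirsch lengths. The edge stabilizer $H$ coincides with $\pi_1(W)$, which fits in the short exact sequence
\[
1 \longrightarrow \pi_1(N_v) \longrightarrow \pi_1(W) \longrightarrow \pi_1(\partial B_v) \longrightarrow 1,
\]
where $\partial B_v$ is an $(n_v-1)$-dimensional infranilmanifold cross-section, so $\pi_1(\partial B_v)$ is virtually nilpotent with $\hh(\pi_1(\partial B_v)) = n_v - 1$. By Theorem \ref{Hirsch-suma} this yields $\hh(H) = (n - n_v) + (n_v - 1) = n - 1$. On the other hand, $\hh(\pi_1(N_v)) = n - n_v$, and combining $H \subseteq \pi_1(N_v)$ with Lemma \ref{Hirsch-subgrupos} gives $\hh(H) \leq n - n_v$. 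Putting the two inequalities together forces $n_v \leq 1$, contradicting the requirement $n_v \geq 2$ in Definition \ref{HGM}.

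The main obstacle is the vertex selection and the verification of the hypothesis of Lemma \ref{estabilizador-paredes}; choosing $v$ on the $T$-geodesic is what produces two distinct $H$-fixed edges incident to a common vertex, at which point the two walls automatically lie in the boundary of one chamber and the hypothesis is immediate. After that containment $H \subseteq \pi_1(N_v)$ is secured, the Hirsch length bookkeeping is routine, using only that $\pi_1(W)$ is virtually polycyclic (which follows from the displayed exact sequence since both end terms are virtually nilpotent).
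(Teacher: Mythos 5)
Your proof is correct and follows essentially the same route as the paper: a contradiction obtained by applying Lemma \ref{estabilizador-paredes} to place $H$ inside the fiber group $\pi_{1}(N_{v})$, and then comparing Hirsch lengths, $\hh(H)=n-1$ against $\hh(\pi_{1}(N_{v}))\leq n-2$. Your treatment is in fact slightly more careful than the paper's, since you explicitly produce the vertex $v$ (via the fixed geodesic in the Bass--Serre tree and the intermediate wall $W''$) needed to satisfy the hypothesis of Lemma \ref{estabilizador-paredes}, and you justify $\hh(H)=n-1$ through the short exact sequence rather than asserting it.
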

\begin{proof}
	First, we need to note that the Hirsch length of $H$ is $\hh(H)=n-1$, because $H$ is aspherical, here $n=\dimm M$. Suppose that $H$ stabilizes another wall $W'\neq W$. By Lemma \ref{estabilizador-paredes}, $H$ is contained in $\pi_{1}(N_{v})$, for $v$ a vertex of $T$ and $N_{v}$ the corresponding fiber of the piece associated to $v$. 
	As $N_{v}$ is an aspherical manifold, using Theorem \ref{Hirsch-vcd} we have that the Hirsch length of $\pi_{1}(N_{v})$ is equal to its virtual cohomological dimension. Now, by Theorem \ref{Hirsch-suma} we have that $\hh(\pi_{1}(N_{v}))\leq n-2$. This gives us a contradiction. We conclude that $W$ is the unique wall which is stabilized by $H$.
\end{proof}
\begin{Lemma}
	Let $M$ be a higher graph manifolds as in definition \ref{HGM}. Let $Z_{1},Z_{2}$ be two pieces of $M$ and $W_{i}$ be a component of $\partial Z_{i}$, for $i=1,2$. Let $H_{i}<\pi_{1}(M)$ be groups conjugate to $\pi_{1}(W_{i})$. Then:
	\begin{enumerate}
		\item The normalizer of $H_{1}$ in $\pi_{1}(M)$ is equal to $H_{1}$.
		\item If $H_{1}$ is conjugate to $H_{2}$ in $\pi_{1}(M)$ then $W_{1}$ is isometric to $W_{2}$ in $M$.
	\end{enumerate}
\end{Lemma}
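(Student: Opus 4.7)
The plan is to mimic the proof of Lemma \ref{normalizar-vertex} verbatim, with edges of the Bass-Serre tree $T$ replacing vertices and Lemma \ref{unicidad-estabilizador-paredes} replacing Lemma \ref{estabi-vertex}. I consider the action of $\pi_{1}(M)$ on $T$. Since $H_{i}$ is conjugate to $\pi_{1}(W_{i})$, it equals the stabilizer of some wall $\widetilde{W}_{i}\subset\widetilde{M}$ projecting onto $W_{i}$; let $e_{i}\in E(T)$ be the edge corresponding to $\widetilde{W}_{i}$, so that $H_{i}=\Stab_{\pi_{1}(M)}(e_{i})$.

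For (1), suppose $g\in\pi_{1}(M)$ normalizes $H_{1}$. Then for every $x\in H_{1}$ there exists $y\in H_{1}$ with $gx=yg$. Since $xe_{1}=e_{1}$, we obtain $ge_{1}=gxe_{1}=yge_{1}$, so every element of $H_{1}$ fixes $ge_{1}$. Thus $H_{1}$ stabilizes the wall $g\widetilde{W}_{1}$. By Lemma \ref{unicidad-estabilizador-paredes}, $\widetilde{W}_{1}$ is the only wall stabilized by $H_{1}$, hence $g\widetilde{W}_{1}=\widetilde{W}_{1}$ and consequently $g\in\Stab_{\pi_{1}(M)}(\widetilde{W}_{1})=H_{1}$.

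For (2), suppose $gH_{1}g^{-1}=H_{2}$. Then $H_{2}$ stabilizes the wall $g\widetilde{W}_{1}$, but by hypothesis $H_{2}$ also stabilizes $\widetilde{W}_{2}$. Applying Lemma \ref{unicidad-estabilizador-paredes} to $H_{2}$, uniqueness gives $g\widetilde{W}_{1}=\widetilde{W}_{2}$. The deck transformation $g\colon\widetilde{M}\to\widetilde{M}$ is a Riemannian isometry of the universal cover and therefore restricts to an isometry $g|_{\widetilde{W}_{1}}\colon\widetilde{W}_{1}\to\widetilde{W}_{2}$ for the induced path metrics $\dd_{W}$ of Definition \ref{def-walls-chambers}. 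Since the covering projection $\widetilde{M}\to M$ is a local isometry that identifies $\widetilde{W}_{i}$ with $W_{i}$, this descends to an isometry $W_{1}\to W_{2}$ in $M$.

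There is no real obstacle here: all substantive work has already been done in Lemma \ref{unicidad-estabilizador-paredes}, where the Hirsch-length argument showed that a wall stabilizer cannot fix a second edge of $T$. The only point requiring a line of care is that, since $\pi_{1}(M)\cong\pi_{1}(\mathcal{G}_{M})$ acts without inversion on the Bass-Serre tree, stabilizing an edge $e_{i}$ really fixes it pointwise, so the identifications $H_{i}=\Stab(e_{i})=\Stab(\widetilde{W}_{i})$ are unambiguous and the argument above goes through.
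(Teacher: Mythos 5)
Your proposal is correct and is exactly the argument the paper intends: the paper's proof consists of the single remark that both statements are analogous to Lemma \ref{normalizar-vertex}, and you carry out that analogy faithfully, replacing vertices by edges and the uniqueness of the fixed vertex (Lemma \ref{estabi-vertex}) by the uniqueness of the stabilized wall (Lemma \ref{unicidad-estabilizador-paredes}). The only cosmetic caveat is that the covering projection does not identify $\widetilde{W}_{i}$ with $W_{i}$ but rather exhibits $W_{i}$ as the quotient of $\widetilde{W}_{i}$ by $H_{i}$, so the isometry in (2) descends because $g|_{\widetilde{W}_{1}}$ is equivariant with respect to $gH_{1}g^{-1}=H_{2}$ — the same level of brevity the paper itself uses for the chamber case.
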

\begin{proof}
	Both proofs are analogous to those of Lemma \ref{normalizar-vertex}.
\end{proof}
We will now present some metric properties of $\til{M}$. We will denote by $\dd$ the distance associated to the Riemannian structure of $\til{M}$. For every $r>0$ and $X\subseteq \til{M}$, we will denote by $N_{r}(X)\subseteq\til{M}$ the $r-$neighborhood of $X$, with respect to the Riemannian metric $\dd$ of $\til{M}$. Recall that $\dd_{C}$ denotes the path metric on a chamber $C\in \til{M}$ and it is the induced metric on $C$ defined by the restriction of the Riemannian structure of $\til{M}$. 
\begin{Lemma}\label{distance-in-chambers}
	Let $M$ be a higher graph manifolds as in definition \ref{HGM}. If $C$ is a chamber of $\til{M}$, then there exists a function $g:\mathbb{R}^{+}\longrightarrow \mathbb{R}^{+}$ such that $g(t)\rightarrow \infty$ as $t\rightarrow +\infty$ and $\dd(x,y)\geq g(\dd_{C}(x,y))$ for each $x,y\in C$.
\end{Lemma}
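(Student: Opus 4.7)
The plan is to reduce the desired metric inequality to the standard fact that a finitely generated subgroup of a finitely generated group is properly embedded in its word metric, using the Milnor-Svar$\check{\textrm{c}}$ Lemma to transport this fact from the group level to the $\til{M}$ level.

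Fix a point $p\in C$ and let $H\leq \pi_{1}(M)$ denote the stabilizer of $C$; by the tree-of-spaces description in Corollary \ref{BStree}, $H$ is identified with $\pi_{1}(Z_{i})$, where $Z_{i}$ is the piece whose universal cover is $C$. The action of $\pi_{1}(M)$ on $(\til{M},\dd)$ is free, proper and cocompact, so Proposition \ref{lemaMS} yields a quasi-isometry $\Phi\co (\pi_{1}(M),\dd_{S})\to (\til{M},\dd)$ given by $\gamma\mapsto \gamma\cdot p$, for any finite generating set $S$. Similarly, since $C\cong \til{Z_{i}}$ and $Z_{i}$ is compact, $H$ acts freely, properly and cocompactly by isometries on the proper geodesic space $(C,\dd_{C})$ with compact quotient $Z_{i}$, so Milnor-Svar$\check{\textrm{c}}$ gives a second quasi-isometry $\Phi|_{H}\co (H,\dd_{S'})\to (C,\dd_{C})$ for any finite generating set $S'$ of $H$. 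The crucial point is that both orbit maps are based at the same point $p$, so they intertwine the algebraic inclusion $H\hookrightarrow \pi_{1}(M)$ with the geometric inclusion $(C,\dd_{C})\hookrightarrow (\til{M},\dd)$ up to uniformly bounded error.

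It then suffices to exhibit a proper, non-decreasing function $g_{0}\co \mathbb{R}^{+}\to \mathbb{R}^{+}$ with $\dd_{S}(h_{1},h_{2})\geq g_{0}(\dd_{S'}(h_{1},h_{2}))$ for all $h_{1},h_{2}\in H$. This is immediate from local finiteness of the Cayley graph of $\pi_{1}(M)$: for every $R\geq 0$, the set $B_{\pi_{1}(M)}(1,R)\cap H$ is finite, hence has finite $\dd_{S'}$-diameter, so setting
\[
g_{0}(n):=\min\{\dd_{S}(1,h)\, :\, h\in H,\ \dd_{S'}(1,h)\geq n\}
\]
defines a non-decreasing function with $g_{0}(n)\to \infty$ as $n\to \infty$. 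Left-invariance of both word metrics immediately promotes this to the two-point inequality.

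Finally, composing $g_{0}$ with the affine distortions coming from the two Milnor-Svar$\check{\textrm{c}}$ quasi-isometries, and absorbing the diameters of fundamental domains into an additive constant, produces a proper function $g\co \mathbb{R}^{+}\to \mathbb{R}^{+}$ with $\dd(x,y)\geq g(\dd_{C}(x,y))$ for all $x,y\in C$. The only real subtlety is the bookkeeping of the multiplicative and additive constants of the two quasi-isometries; since affine reparametrization preserves the property that $g(t)\to \infty$ as $t\to \infty$, no essential difficulty arises. The argument relies on nothing specific to the negative curvature or nilpotent structure of the pieces beyond the fact that $Z_{i}$ is compact, so a single function $g$ depending only on the finitely many pieces of $M$ works for every chamber simultaneously.
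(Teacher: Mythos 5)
Your argument is correct, but it follows a genuinely different route from the paper. The paper's proof is a short, purely metric compactness argument: it reduces to a fixed basepoint $x$, assumes $\dd_{C}(x,y_{i})\rightarrow\infty$ while $\dd(x,y_{i})$ stays bounded, and uses properness of $\til{M}$ together with the fact that $C$ is closed in $\til{M}$ to extract a limit point $y\in C$, contradicting $\dd_{C}(x,y_{i})\rightarrow\infty$ (the same-topology remark is what turns $\dd$-convergence into $\dd_{C}$-boundedness). You instead transfer the question to word metrics: Milnor--Svar$\check{\textrm{c}}$ applied to $\pi_{1}(M)$ acting on $(\til{M},\dd)$ and to the chamber stabilizer $H\cong\pi_{1}(Z_{i})$ acting on $(C,\dd_{C})$, with both orbit maps based at the same point, reduces the lemma to the standard fact that a finitely generated subgroup is properly embedded in the ambient word metric, which you get from local finiteness of the Cayley graph via your distortion function $g_{0}$. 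Both proofs are sound; yours is longer but makes the uniformity over the two points (and over all chambers, since there are only finitely many pieces) completely explicit through cocompactness, and it produces an essentially concrete distortion function, whereas the paper's proof is more elementary, needing only properness of $\til{M}$ and closedness of $C$, at the cost of leaving the reduction to a fixed basepoint (which implicitly rests on the same cocompactness/equivariance you invoke) rather terse. Two small points you should make explicit if you keep your version: $(C,\dd_{C})$ is a proper geodesic space (complete, locally compact length space, so Hopf--Rinow applies), which is needed to apply Milnor--Svar$\check{\textrm{c}}$ to $H$; and $g_{0}$ is non-decreasing because the sets over which the minimum is taken are nested, which is what lets you compose with the affine estimates at the end.
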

\begin{proof}
	As $\dd$ and $\dd_{C}$ induce the same topology in $C$, it is enough to prove the result for a fixed point $x$. \\
	Let $\{y_{i}\}$ be a sequence of points in $C$ such that $\dd_{C}( x, y_{i})\ra \infty$. Now, as $\til{M}$ is proper, if we suppose that $\dd(x,y_{i})$ is bounded, then passing to a sub sequence if necessary, there exists $y\in \til{M}$ such that $\displaystyle\lim_{i\longrightarrow \infty } y_{i}=y$. We know that $C$ is closed in $\til{M}$, and $y$ is necessarily in $C$. This contradicts $d_{C}(x,y_{i})\longrightarrow \infty$.
\end{proof}
The proof of the following Lemma follows the same line of argument of Lemma 2.19 \cite{FLS}.
\begin{Lemma}\label{wall1-subset-nbh-wall2}
	Let $M$ be a higher graph manifolds as in definition \ref{HGM}. Let $W_{1},W_{2}$ be two walls of $\til{M}$, and suppose that there exists $r> 0$ such that $W_{1}\subset N_{r}(W_{2})$, then $W_{1}=W_{2}$. 
\end{Lemma}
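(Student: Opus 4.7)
The plan is to argue by contradiction: assume $W_{1}\neq W_{2}$ while still $W_{1}\subset N_{r}(W_{2})$, and derive a contradiction by comparing Hirsch lengths, along the general lines of Lemma 2.19 in \cite{FLS}. Set $H_{i}=\Stab(W_{i})\leq \pi_{1}(M)$ and $K=H_{1}\cap H_{2}$.

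First I would establish that $[H_{1}:K]<\infty$. For any $h\in H_{1}$, the equality $hW_{1}=W_{1}$ combined with $W_{1}\subset N_{r}(W_{2})$ gives $W_{1}=hW_{1}\subset N_{r}(hW_{2})$, so fixing a basepoint $x_{0}\in W_{1}$ shows that every wall in the $H_{1}$-orbit of $W_{2}$ meets the compact ball $\overline{B(x_{0},r)}$. Since the walls of $\til{M}$ form finitely many $\pi_{1}(M)$-orbits and each wall-stabilizer acts cocompactly on its wall, a short fundamental-domain argument using proper discontinuity of the $\pi_{1}(M)$-action shows that only finitely many walls of $\til{M}$ can meet a given compact set. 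Hence the $H_{1}$-orbit of $W_{2}$ is finite, and its stabilizer in $H_{1}$, namely $K$, has finite index.

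Next I would play off two Hirsch length computations against each other. On one hand, $W_{1}$ covers a boundary component $\partial Z_{i}$, which is a closed aspherical $(n-1)$-manifold, so Theorem \ref{Hirsch-vcd} gives $\hh(H_{1})=\vcd(H_{1})=n-1$; applying Theorem \ref{Hirsch-suma} to $1\to K\to H_{1}\to H_{1}/K\to 1$ (with the finite quotient contributing zero) then yields $\hh(K)=n-1$. On the other hand, because $W_{1}\neq W_{2}$ the corresponding edges of the Bass-Serre tree $T$ are distinct, so there is a vertex $v\in V(T)$ (the common endpoint of the two edges when they are adjacent, or any interior vertex of the tree-geodesic between them otherwise) such that every path in $\til{M}$ from $W_{1}$ to $W_{2}$ crosses the chamber corresponding to $v$. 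Every element of $K$ setwise preserves both $W_{1}$ and $W_{2}$, so Lemma \ref{estabilizador-paredes} places $K$ inside the fiber subgroup $\pi_{1}(N_{v})$ of the piece at $v$. Since $N_{v}$ is a closed aspherical manifold of dimension $n-n_{v}\leq n-2$ (using $n_{v}\geq 2$ from Definition \ref{HGM}), Theorem \ref{Hirsch-vcd} yields $\hh(\pi_{1}(N_{v}))\leq n-2$, contradicting $\hh(K)=n-1$. Thus $W_{1}=W_{2}$.

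The main obstacle is the local finiteness of walls invoked in the first step, as it is the only ingredient not quoted directly from earlier results in the paper. The cleanest implementation fixes a compact fundamental domain $F\subset W_{2}$ for $\Stab(W_{2})$ and observes that the set $\{h\in\pi_{1}(M):hF\cap \overline{B(x_{0},r)}\neq\emptyset\}$ is finite by proper discontinuity, which in turn bounds the number of $\Stab(W_{2})$-cosets $h\,\Stab(W_{2})$ whose associated wall $hW_{2}$ meets $\overline{B(x_{0},r)}$. Everything else in the argument is a direct appeal to Lemma \ref{estabilizador-paredes} and the Hirsch-length results of the preliminaries.
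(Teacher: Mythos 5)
Your argument is essentially correct, but it follows a genuinely different route from the paper. The paper's proof is geometric: using the tree-of-spaces structure and Lemma \ref{distance-in-chambers} it reduces to the case of two walls adjacent to a common chamber $C\cong\til{B_{i}}\times\til{N_{i}}$, projects $W_{1},W_{2}$ to horospheres $O_{1},O_{2}$ in the pinched negatively curved base $\til{B_{i}}$, and concludes $O_{1}=O_{2}$ (hence $W_{1}=W_{2}$) because in such a metric one horosphere cannot stay in a bounded neighborhood of a distinct one; this is the FLS Lemma 2.19 strategy. You instead run a stabilizer/Hirsch-length argument in the spirit of the paper's own Lemma \ref{unicidad-estabilizador-paredes}: the coarse inclusion $W_{1}\subset N_{r}(W_{2})$ plus local finiteness of the wall family (your fundamental-domain argument, which is sound since wall stabilizers act cocompactly on walls and the deck action is properly discontinuous) gives $[H_{1}:H_{1}\cap H_{2}]<\infty$, and then Lemma \ref{estabilizador-paredes} forces the finite-index subgroup $K$ into a fiber group of Hirsch length at most $n-2$, contradicting $\hh(K)=n-1$. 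What your approach buys is that it bypasses the horosphere-divergence input entirely and needs no curvature estimate beyond what Lemma \ref{estabilizador-paredes} already encodes; what it costs is the extra local-finiteness step and two small repairs you should make explicit: $K$ need not be normal in $H_{1}$, so to get $\hh(K)=n-1$ pass to the normal core of $K$ in $H_{1}$ (or invoke finite-index invariance of Hirsch length) rather than applying Theorem \ref{Hirsch-suma} to $1\to K\to H_{1}\to H_{1}/K\to 1$ directly; and, as in the paper's Lemma \ref{unicidad-estabilizador-paredes}, the wall group is only virtually polycyclic, so the appeal to Theorem \ref{Hirsch-vcd} should be read in that (standard) generality. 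With those adjustments your proof stands, and it even avoids any appeal to the absence of surface pieces, since the contradiction only needs $n-1>n-2$.
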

\begin{proof}
	Consider the realization of  $\til{M}$ as tree of spaces, we can then reduce to the case when $W_{1}$ and $W_{2}$ are walls adjacent to a given chamber $C$. By the previous lemma, we can assume that $W_{1}$ is contained in an $r-$neighborhood of $W_{2}$ with respect to the metric of $C$. \\
	Let $Z_{i}$ be the corresponding piece to the chamber $C$ and let $B_{i},N_{i}$ be its base and fiber, in that order. We known that $C\cong \til{B_{i}}\times \til{N_{i}}$. As $M_{i}$ is a manifold that admits a negative curvature metric from which a finite number of non-maximal horospheres were removed, we have that by construction  $W_{1},W_{2}$ are projected onto two horospheres $O_{1},O_{2}$ and $O_{1}$ is contained in the $r-$neighborhood of $O_{2}$ with respect to the metric in $\til{B_{i}}$. However, the metric $\til{B}_{i}$ is bounded above by the locally symmetric metric, this forces $O_{1}=O_{2}$. Therefore, $W_{1}=W_{2}$. 
\end{proof}
As a consequence of this lemma we obtain the following two results. 
\begin{Corollary}\label{wall-adjacent-to-chamber1}
	Let $M$ be a higher graph manifolds as in definition \ref{HGM}. Let $W$ be a wall of $\til{M}$ and let $C$ be a chamber of $\til{M}$. If $W\subset N_{r}(C)$ for some $r\geq 0$, then $W$ is a wall adjacent to $C$.
\end{Corollary}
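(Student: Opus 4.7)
The plan is to argue by contradiction using the tree-of-spaces structure of $\til M$ together with the previous Lemma \ref{wall1-subset-nbh-wall2}. Suppose, for contradiction, that $W$ is not adjacent to $C$. Let $v_C \in V(T)$ be the vertex corresponding to $C$ and let $e_W \in E(T)$ be the edge corresponding to $W$. Since $e_W$ is not incident to $v_C$, the unique geodesic segment in $T$ from $v_C$ to $e_W$ starts with a well-defined edge $e_{W'}$ incident to $v_C$, corresponding to a wall $W'$ adjacent to $C$ with $W' \neq W$.

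The key observation is that walls of $\til M$ are separating: since $(\til M,\phi,T)$ is a tree of spaces, removing $W'$ from $\til M$ disconnects it into the components corresponding to the two subtrees of $T$ obtained by removing the open edge $e_{W'}$. By construction, $v_C$ lies in one component while $e_W$ (and hence $W$) lies in the other. Consequently, any continuous path in $\til M$ joining a point of $W$ to a point of $C$ must cross $W'$.

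Now use the hypothesis $W \subset N_r(C)$: for every $x \in W$ there exists $y \in C$ with $\dd(x,y) \leq r$. A minimizing geodesic in $\til M$ from $x$ to $y$ must then meet $W'$ at some point $z$, and $\dd(x,z) \leq \dd(x,y) \leq r$. Hence $x \in N_r(W')$, and since $x \in W$ was arbitrary, $W \subset N_r(W')$. Applying Lemma \ref{wall1-subset-nbh-wall2} yields $W = W'$, contradicting our assumption that $W$ is not adjacent to $C$ (since $W'$ is). Therefore $W$ is adjacent to $C$, as claimed.

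The main obstacle to make rigorous is the separation property of walls, i.e.\ that any path from $C$ to $W$ must cross $W'$. This is really a structural statement about the tree of spaces $(\til M,\phi,T)$: it follows from the fact that $\phi$ sends connected subsets of $\til M$ to connected subsets of $T$, combined with the observation that the open edge $e_{W'}$ is a cut-point of $T$ separating $v_C$ from $e_W$. The rest of the argument reduces to an application of Lemma \ref{wall1-subset-nbh-wall2}, which has already been established.
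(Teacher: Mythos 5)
Your argument is correct and follows the same route as the paper: the paper's (very terse) proof likewise observes that $W\subset N_{r}(C)$ forces $W$ to lie in the $r$-neighbourhood of a wall $W'$ adjacent to $C$, and then applies Lemma \ref{wall1-subset-nbh-wall2} to conclude $W=W'$. You have merely made explicit the separation property of walls in the tree of spaces that the paper leaves implicit, which is a fair and accurate filling-in of that step.
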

\begin{proof}
	Note that $W$ is contained in the $r-$neighbourhood of an adjacent wall to a chamber $C$.  Using Lemma \ref{wall1-subset-nbh-wall2}, we obtain that $W$ is adjacent to $C$.
\end{proof}
\begin{Corollary}\label{uni-chambers}
	Let $M$ be a higher graph manifolds as in definition \ref{HGM}. Let $C_{1},C_{2}$ be two chambers of $\til{M}$ and suppose that there exists an $r\geq 0$ such that $C_{1}\subset N_{r}(C_{2})$, then $C_{1}=C_{2}$.
\end{Corollary}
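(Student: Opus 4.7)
The plan is to reduce the question to counting how many walls of $C_1$ can be simultaneously adjacent to $C_2$ in the Bass-Serre tree $T$, and then derive a contradiction if $C_1 \neq C_2$ by exhibiting at least two such walls.

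First I would translate the hypothesis into a statement about walls. Let $v_1, v_2 \in V(T)$ be the vertices corresponding to the chambers $C_1, C_2$, and suppose toward contradiction that $v_1 \neq v_2$. Each wall $W$ of $\til{M}$ adjacent to $C_1$ is contained in $C_1$, and by hypothesis $C_1 \subset N_r(C_2)$, so $W \subset N_r(C_2)$. Corollary \ref{wall-adjacent-to-chamber1} then forces $W$ to be a wall adjacent to $C_2$ as well. Hence every wall adjacent to $v_1$ in $T$ is also incident to $v_2$, i.e.\ corresponds to an edge joining $v_1$ and $v_2$.

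Next I would show that $C_1$ has at least two walls. By Corollary \ref{BStree}, $C_1 \cong \til{B_i} \times \til{N_i}$, and the walls of $C_1$ correspond to boundary components of $\til{B_i}$, i.e., to the preimages in $\til{V_i}$ of the truncated cusps. Since $V_i$ is a finite-volume, non-compact, pinched negatively curved manifold, its fundamental group is infinite and each cusp subgroup has infinite index in $\pi_1(V_i)$. Consequently every cusp of $V_i$ lifts to infinitely many horospherical boundary components in $\til{B_i}$, so $C_1$ has infinitely many walls. In particular, there exist two distinct walls $W, W' $ adjacent to $v_1$.

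To conclude, I would invoke the fact that $T$ is a tree. By the previous paragraph, both $W$ and $W'$ must be edges joining $v_1$ and $v_2$, yielding two distinct edges between the same pair of vertices, which contradicts the absence of multiple edges in a tree. Therefore $v_1 = v_2$, whence $C_1 = C_2$. The only potentially subtle point—and hence the step I would be most careful about—is the claim that $\til{B_i}$ has at least two (indeed infinitely many) boundary components; once that is in hand, the tree argument via Corollary \ref{wall-adjacent-to-chamber1} is immediate.
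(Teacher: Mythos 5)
Your proposal is correct and follows essentially the same route as the paper: the paper's proof also takes two distinct walls adjacent to $C_{1}$, applies Corollary \ref{wall-adjacent-to-chamber1} to conclude they are adjacent to $C_{2}$, and deduces $C_{1}=C_{2}$. You simply make explicit the details the paper leaves implicit (the existence of at least two walls adjacent to $C_{1}$ and the fact that a tree admits no multiple edges between two vertices), which is a faithful filling-in rather than a different argument.
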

\begin{proof}
	Let $W_{1},W_{2}$ be distinct  walls, both of them adjacent to $C_{1}$. By Corollary \ref{wall-adjacent-to-chamber1}, they are adjacent to $C_{2}$. Thus $C_{1}=C_{2}$.
\end{proof}
\indent We now want to completely characterize the fundamental group of the pieces of a higher graph manifold $M$. We will focus on the action of $\pi_{1}(M)$ on $T$, and describe the set of stabilizers of walls of $\til{M}$.\\
\indent We say that a piece $Z$ of $M$ is a \textit{surface piece} if the base of the fiber bundle which defines $Z$ is a hyperbolic surface. \\
\indent In order to understand the stabilizers of walls, we define the following set:
	$$\mathcal{N}(\pi_{1}(M))=\{ H < \pi_{1}(M) \textrm{ } | \textrm{ }H \textrm{ a maximal nilpotent subgroup of } \pi_{1}(M) \textrm{ and }\hh(H)=n-1 \}$$
Here $\hh(H)$ is the Hirsch length of $H$. 
\begin{Proposition}\label{NG1}
	Let $M$ be a higher graph manifolds as in definition \ref{HGM} without surface pieces. Let $H$ be a subgroup of  $\pi_{1}(M)$. Then $H\in \mathcal{N}(\pi_{1}(M))$ if and only if, $H$ is a maximal nilpotent subgroup of the stabilizer of a vertex $v$ in the Bass-Serre tree $T$ associated to the decomposition of $M$ in pieces and has $\hh(H)=n-1$.
\end{Proposition}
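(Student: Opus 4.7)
My plan is to analyze the action of a nilpotent subgroup $H\leq\pi_1(M)$ on the Bass-Serre tree $T$ via Serre's dichotomy (Proposition \ref{Nilpotent-groups-serre}): either $H$ fixes a vertex, or $H$ acts by translations on a bi-infinite geodesic via a surjection $H\twoheadrightarrow\mb{Z}$. The no-surface-pieces hypothesis ($n_i\geq 3$ for every piece) combined with Theorem \ref{Hirsch-suma} lets me rule out the translation case whenever $\hh(H)=n-1$, and this is the crucial observation driving both directions of the proposition.

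For the forward implication, assume $H\in\mc{N}(\pi_1(M))$. If $H$ were in the translation case, the kernel $K$ of $H\twoheadrightarrow\mb{Z}$ would fix every vertex of the axis, and in particular two walls $W_1,W_2$ separated by an intermediate vertex $v''\in V(T)$ corresponding to a piece $Z_{v''}$. Lemma \ref{estabilizador-paredes} then places $K$ inside the fiber subgroup $\pi_1(N_{v''})$, so
\[
\hh(H)=\hh(K)+1\leq (n-n_{v''})+1\leq n-2,
\]
contradicting $\hh(H)=n-1$. Hence $H$ fixes a vertex $v$ and $H\subset G_v$. Since any nilpotent subgroup of $G_v$ containing $H$ is itself nilpotent in $\pi_1(M)$, maximality of $H$ in $\pi_1(M)$ immediately forces maximality of $H$ in $G_v$.

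For the backward implication, let $H$ be maximal nilpotent in some $G_v$ with $\hh(H)=n-1$, and suppose $H'\supseteq H$ is nilpotent in $\pi_1(M)$. By Lemma \ref{exp-growth}, $\pi_1(M)$ is not virtually nilpotent, so any nilpotent subgroup has Hirsch length at most $n-1$; thus $\hh(H')=n-1$. Running the same Serre dichotomy and Hirsch-length exclusion on $H'$ places $H'$ inside some $G_{v'}$. If $v'=v$, maximality of $H$ in $G_v$ gives $H'=H$. Otherwise $H\subset H'$ fixes both $v$ and $v'$, hence the entire geodesic segment between them in $T$; if that segment had length at least $2$, Lemma \ref{estabilizador-paredes} applied at an intermediate vertex would put $H$ in a fiber subgroup, contradicting $\hh(H)=n-1$. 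Therefore $v$ and $v'$ are joined by a single edge $e$ corresponding to a wall $W$, and $H\subset G_e=\pi_1(W)\subset G_v$, so maximality of $H$ in $G_v$ yields $H=\pi_1(W)$. To close the argument inside $G_{v'}$, I invoke Eberlein's characterization (recalled in Section \ref{section2}) of the maximal almost nilpotent subgroups of $\pi_1(V_{v'})$ as cusp subgroups: projecting via $\rho:\pi_1(Z_{v'})\to\pi_1(B_{v'})$, $\rho(H')$ is a nilpotent supergroup of the cusp subgroup $\rho(\pi_1(W))$ and therefore equals it, so that $H'\subset\rho^{-1}(\rho(\pi_1(W)))=\pi_1(W)=H$.

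The step I expect to be the main obstacle is this last one: the maximality of $H$ is only assumed inside the particular vertex group $G_v$, and transferring that maximality to the adjacent vertex group $G_{v'}$ genuinely requires the external input of Eberlein's rigidity of cusp subgroups combined with the fiber-bundle description of chambers from Corollary \ref{BStree}. The remaining ingredients, namely Serre's dichotomy, Hirsch-length additivity, and the path-length collapse via Lemma \ref{estabilizador-paredes}, should plug together cleanly once the no-surface-pieces hypothesis is in hand.
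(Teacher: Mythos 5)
Your forward implication is essentially the paper's argument: Serre's dichotomy (Proposition \ref{Nilpotent-groups-serre}), the kernel $K$ of the translation homomorphism, Hirsch-length additivity (Theorem \ref{Hirsch-suma}), and Lemma \ref{estabilizador-paredes} combined with the no-surface-pieces bound $\hh(\pi_{1}(N_{v''}))\leq n-3$ to exclude the translation case; your bookkeeping $\hh(H)=\hh(K)+1\leq n-2$ is in fact cleaner than the paper's. Where you genuinely diverge is the backward implication. The paper fixes a maximal nilpotent $H'\supseteq H$ with $\hh(H')=n-1$ and splits according to whether $v$ is the unique vertex fixed by $H$: in the first case it gets $H'=H$ inside $G_{v}$, and in the second it only identifies $H$ with an edge stabilizer and asserts membership in $\mathcal{N}(\pi_{1}(M))$, in effect deferring the maximality question to the wall-stabilizer analysis of Lemma \ref{NG2}. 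You instead take an arbitrary nilpotent overgroup $H'$, run the same dichotomy to place it in some $G_{v'}$, force the distance between $v$ and $v'$ to be at most one by the fiber/Hirsch argument, and then pin $H'$ down using Eberlein's characterization of cusp subgroups as the maximal almost nilpotent subgroups of the lattice $\pi_{1}(B_{v'})$, pulled back through $1\to\pi_{1}(N_{v'})\to\pi_{1}(Z_{v'})\to\pi_{1}(B_{v'})\to 1$. This is the mechanism the paper itself only deploys later, in the proof of Lemma \ref{NG2}, so your version makes the backward direction more self-contained (and avoids choosing a maximal overgroup at all); the cost is the extra input from Eberlein, which the paper does recall in Section \ref{section2}.

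Two small repairs are needed, neither fatal. First, the claim that every nilpotent subgroup of $\pi_{1}(M)$ has Hirsch length at most $n-1$ does not follow from uniformly exponential growth alone (one would need something like Strebel's theorem on infinite-index subgroups of Poincar\'e duality groups); but you never need it, since Lemma \ref{Hirsch-subgrupos} already gives $\hh(H')\geq\hh(H)=n-1$, which is all your exclusion argument uses. Second, deducing $H=\pi_{1}(W)$ from maximality of $H$ in $G_{v}$ tacitly assumes the wall group is nilpotent, whereas a priori it is only virtually nilpotent (infranil); the paper makes the same move in its second case, and in your argument it can be bypassed: since $\hh(H)=n-1>\hh(\pi_{1}(N_{v'}))$, the image $\rho(H)$ is a nontrivial almost nilpotent subgroup of the cusp subgroup $\rho(\pi_{1}(W))$, so Eberlein's uniqueness of maximal almost nilpotent overgroups still forces $\rho(H')$ into that cusp subgroup, hence $H'\subseteq\rho^{-1}(\rho(\pi_{1}(W)))=\pi_{1}(W)\subseteq G_{v}$, and maximality of $H$ in $G_{v}$ closes the argument.
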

\begin{proof}
	Let $Z_{v}$ be the piece corresponding to the vertex $v$ and let $N_{v}$ be the fiber as in definition \ref{HGM}. \\
	Suppose that $H\in \mathcal{N}(\pi_{1}(M))$, then by Proposition \ref{Nilpotent-groups-serre} either $H$ fixes a unique vertex, or there exists a path $\gamma$, fixed as a set under the action $H$, on which $H$ acts by translations. We will prove that $H$ can not act by translations on $\gamma$.\\
	By contradiction, assume $H$ acts by translations on the path $\gamma$. Denote by $\varphi:H\longrightarrow \mathbb{Z}$ the homomorphism through which $H$ acts by translations in $\gamma$ and $K=\Ker(\varphi)$. As $\hh(H)=n-1$ and  $\hh(\mathbb{Z})=1$, Lemma \ref{Hirsch-suma} implies $\hh(K)=n-2$. Moreover, $K$ is a subgroup of $H$ which fixes the path $\gamma$. \\
	For every point on $\gamma$, in particular for $v$, $K$ acts on the chamber $C_{v}$ which corresponds to $v$ and $K$ stabilizes two walls $W_{1},W_{2}$ of $C_{v}$ for which $\gamma$ enters and leaves. By Lemma \ref{estabilizador-paredes}, $K\leq \pi_{1}(N_{v})$. Hence the Hirsch length of $K$ is less than the Hirsch length of $N_{v}$, that is $\hh(K)<\hh(N_{v})$. By hypothesis we do not have surface pieces, then $\hh(N_{v})< n-3$, which is a contradiction. Thus $K$ only fixes a unique vertex of $\gamma$. Therefore $H$ is contained in the stabilizer of a vertex.\\
	\indent Suppose $H$ is a maximal nilpotent subgroup of the stabilizer of a vertex $v\in V(T)$ and is such that $\hh(H)=n-1$. Let $H'$ be a maximal nilpotent subgroup of $\pi_{1}(M)$, with $\hh(H')=n-1$ and containing $H$. There are two cases to consider.\\
	If $v$ is the unique vertex fixed by $H$, then as $H<H'$, $H'$ also fixes $v$. Therefore $H'$ is contained in the stabilizer of $v$ and by maximality $H=H'$.\\
	If $H$ fixes another vertex $w\neq v$, then $H$ must also fix an edge $e$ of $T$ exiting from $v$. As $H$ is a maximal subgroup of the stabilizer of $v$, $H$ coincides with the stabilizer of $e$. \\
	\indent In both cases we conclude that $H\in \mathcal{N}(\pi_{1}(M))$.
\end{proof}
 
\begin{Lemma}\label{NG2}
	Let $M$ be a higher graph manifold as in definition \ref{HGM}. If $H< \pi_{1}(M)$ is a wall stabilizer  then $H\in \mathcal{N}(\pi_{1}(M))$. On the other hand, if $H\in \mathcal{N}(\pi_{1}(M))$, then:
	\begin{enumerate}
		\item either $H$ is a wall stabilizer, or
		\item there exists a unique vertex $v\in V(T)$ fixed by $H$, and this vertex corresponds to a surface piece of $M$.
	\end{enumerate}
\end{Lemma}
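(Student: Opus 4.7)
The proof splits into two directions. For the forward direction, suppose $H$ is a wall stabilizer, so $H \cong \pi_{1}(W)$ for a wall $W = \partial B_i \times N_i$. I would show $H$ is nilpotent of Hirsch length $n-1$ using the extension
\[
1 \longrightarrow \pi_{1}(N_i) \longrightarrow \pi_{1}(W) \longrightarrow \pi_{1}(\partial B_i) \longrightarrow 1,
\]
where $\pi_{1}(N_i)$ is nilpotent with $\hh = n - n_i$ and $\pi_{1}(\partial B_i)$ is the cusp subgroup of the negatively curved lattice $\pi_{1}(V_i)$, nilpotent with $\hh = n_i - 1$; Theorem \ref{Hirsch-suma} then gives $\hh(H) = n - 1$. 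For maximality, I would assume $H' \supseteq H$ is nilpotent and apply Proposition \ref{Nilpotent-groups-serre} to its action on $T$. If $H'$ fixes a vertex $v$, then $H' \leq \pi_{1}(Z_v)$, and since $H$ is already maximal nilpotent inside $\pi_{1}(Z_v)$ (being the preimage of a maximal cusp subgroup of the lattice $\pi_{1}(B_v)$), we conclude $H' = H$. If $H'$ translates along a line, its kernel $K$ contains $H$, so $\hh(K) \geq n - 1$, while Lemma \ref{estabilizador-paredes} confines $K$ to some $\pi_{1}(N_v)$ of Hirsch length at most $n - 2$, a contradiction.

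For the converse, let $H \in \mathcal{N}(\pi_{1}(M))$ and apply Proposition \ref{Nilpotent-groups-serre}. Assume first that $H$ fixes a vertex $v$, so $H \leq \pi_{1}(Z_v)$; projecting via the fiber bundle to $\pi_{1}(B_v)$, the image is a nilpotent subgroup of a lattice in a pinched negatively curved space, hence either contained in a cusp subgroup or virtually cyclic generated by a hyperbolic isometry. In the cusp case, $H$ lies inside a wall stabilizer $H_W$ with $\hh(H_W) = n - 1 = \hh(H)$, so maximality of $H$ forces $H = H_W$, giving case (1). In the hyperbolic case, $\hh(H) \leq 1 + \hh(\pi_{1}(N_v)) = 1 + (n - n_v)$, and $\hh(H) = n - 1$ forces $n_v = 2$, so $Z_v$ is a surface piece. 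Uniqueness of $v$ then follows: if $H$ fixed another vertex it would fix an intervening edge and hence lie inside a wall stabilizer, placing the image of $H$ in a cusp subgroup of $\pi_{1}(B_v)$ and contradicting the hyperbolic nature of the image. Hence case (2) holds.

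The main obstacle is excluding the translation case. Suppose $H$ translated along a line $\gamma$ in $T$. Then the kernel $K$ of the translation homomorphism satisfies $\hh(K) = n - 2$ and sits inside $\pi_{1}(N_v)$ for every $v$ on $\gamma$ by Lemma \ref{estabilizador-paredes}, forcing each such $Z_v$ to be a surface piece and $K$ to be finite-index in each $\pi_{1}(N_v)$. The plan is to contradict the maximality of $H$. For adjacent vertices $v, v'$ on $\gamma$ sharing a wall $W$, both $\pi_{1}(N_v)$ and $\pi_{1}(N_{v'})$ are corank-one subgroups of the nilpotent group $\pi_{1}(W)$ and contain the common corank-one subgroup $K$; a Hirsch length count within $\pi_{1}(W)$ forces $\pi_{1}(N_v)$ and $\pi_{1}(N_{v'})$ to coincide up to finite index, so a translation generator $t \in H$ normalizes $\pi_{1}(N_v)$. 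Hence $\langle \pi_{1}(N_v), t \rangle$ strictly contains $H$, and the delicate step I expect to require the most care is verifying that this extension is nilpotent, which reduces to showing conjugation by $t$ on $\pi_{1}(N_v)$ is unipotent given that its restriction to $K$ is. Once this is established, maximality of $H$ is violated and the translation case is ruled out.
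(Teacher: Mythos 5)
Your forward direction and your analysis of the fixed-vertex alternative in the converse essentially coincide with the paper's argument: the Hirsch-length computation via Theorem \ref{Hirsch-suma}, the projection of $H$ to $\pi_{1}(B_{v})$ together with Eberlein's dichotomy (image in a cusp subgroup versus infinite cyclic image generated by a hyperbolic isometry), and maximality upgrading containment in a wall stabilizer to equality. The only structural difference there is that the paper obtains ``$H$ fixes a vertex'' by citing Proposition \ref{NG1} instead of re-running Serre's alternative, and Proposition \ref{NG1} carries the standing hypothesis that $M$ has \emph{no surface pieces}; under that hypothesis the translation case is killed by the one-line count $K\leq \pi_{1}(N_{v})$, $\hh(K)=n-2>n-n_{v}=\hh(\pi_{1}(N_{v}))$ when $n_{v}\geq 3$ (this is also exactly your argument for ruling out translations of the enlarged group $H'$ in the forward direction, which is fine).

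The genuine gap is your attempt to exclude the translation case for $H$ itself \emph{without} that hypothesis. Your mechanism (enlarge $H$ to $\langle \pi_{1}(N_{v}),t\rangle$ and contradict maximality) does not close: you only know $K$ has finite index in $\pi_{1}(N_{v})$, so when $K=\pi_{1}(N_{v})$ the ``enlargement'' equals $H$ and there is nothing to contradict; moreover $t$ a priori only commensurates $\pi_{1}(N_{v})$ rather than normalizing it, and the nilpotency of the extension is precisely the unproved point you flag. In fact no argument can complete this step at the stated level of generality: take two copies of $\Sigma\times S^{1}$, with $\Sigma$ a truncated once-punctured hyperbolic torus, glued along their boundary tori so that the circle fibers match. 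This is a higher graph manifold as in Definition \ref{HGM} (with surface pieces), $M\cong \Sigma_{2}\times S^{1}$, and for a primitive $\gamma\in\pi_{1}(\Sigma_{2})$ crossing the separating curve essentially, $H=\langle\gamma\rangle\times\mathbb{Z}$ is a maximal nilpotent subgroup with $\hh(H)=n-1=2$ which acts by translations on the Bass--Serre tree: it is not a wall stabilizer and fixes no vertex, so $K=\pi_{1}(N_{v})$ and your hoped-for contradiction is unavailable. The correct route is the paper's: either impose (as Proposition \ref{NG1} and Theorem \ref{Isomorphism-preserve-pieces} do) that there are no surface pieces, where translations are excluded by the Hirsch-length count above, or else accept that with surface pieces the translation alternative must be dealt with (or excluded by restricting the allowed gluings) rather than disproved.
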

\begin{proof}
	If $H$ is a wall stabilizer, then $H$ is a nilpotent maximal subgroup of the stabilizer of a vertex of $T$, hence Proposition \ref{NG1} implies $H\in \mathcal{N}(\pi_{1}(M))$.\\
	Now, suppose $H\in \mathcal{N}(\pi_{1}(M))$ is not a wall stabilizer.   By Proposition \ref{NG1}, $H$ is contained in the stabilizer of a vertex $v\in V(T)$. Moreover, $v$ is the unique vertex which is fixed by $H$, because otherwise $H$ would fix an edge and by maximality, $H$ will be a wall stabilizer. \\
	Let $Z_{v}$ be the piece corresponding to $v$ and $N_{v}^{k},B_{v}^{n-k}$ the fiber and base of $Z_{v}$, in that order. Suppose by contradiction that $Z_{v}$ is not a surface piece, hence $k\leq n-3$. Now, the Hirsch length of the projection of $H$ on $\pi_{1}(B_{v})$ is at least $n-k-1\geq 2$ and it is therefore contained in a cusp subgroup. By maximality, this implies that $H$ is a wall stabilizer, a contradiction.
\end{proof}
\begin{Lemma}\label{third-wall}
	Let $M$ be a higher graph manifold as in definition \ref{HGM}. Let $C$ be a chamber in $\til{M}$. Let $W_{1},W_{2}$ be two different walls adjacent to $C$ and let $H_{1},H_{2}$ be their stabilizers. Let $\overline{H}\in \mathcal{N}(\pi_{1}(M))\setminus \{H_{1},H_{2}\}$. Then, for each $k\geq 0$, there exist points $w_{1}\in W_{1}\cap C$, $w_{2}\in W_{2}\cap C$, which can be joined by a path $\gamma:[0,l]\ra C$ that does not cross the neighborhood of radius $k$, $N_{k}(\overline{W})$, of $\overline{W}$. Here $\overline{W}$ is the wall which is stabilized by $\overline{H}$. 
\end{Lemma}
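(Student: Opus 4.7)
My plan is to invoke the product structure $C \cong \til{B_i} \times \til{N_i}$ given by Corollary \ref{BStree}, under which the walls adjacent to $C$ correspond bijectively to the boundary horospheres of $\til{B_i}$, and then to split the argument into two cases according to whether the wall $\overline{W}$ is adjacent to $C$ in the Bass-Serre tree $T$ or not. In the first (adjacent) case, $\overline{W}$ corresponds to a boundary horosphere $\overline{O}$ of $\til{B_i}$ distinct from the horospheres $O_1, O_2$ associated with $W_1, W_2$. Regarding $\til{B_i}$ as sitting inside the Hadamard manifold $\til{V_i}$, the horoball bounded by $\overline{O}$ is convex, and so is its closed $k$-neighborhood (since $k$-neighborhoods of convex subsets remain convex in a CAT(0) space). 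Because $\til{V_i}$ has dimension at least $2$, the complement of this $k$-neighborhood is path-connected, so I can join a point of $O_1$ to a point of $O_2$ by a path avoiding $N_k(\overline{O})$; sliding off any other removed horoballs by nearest-point projection, the resulting path lies in $\til{B_i}$ and lifts to the required $\gamma$ in $C$.

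In the second (non-adjacent) case, I let $W_3$ be the wall adjacent to $C$ corresponding to the first edge on the geodesic in $T$ from $v(C)$ to the edge $e(\overline{W})$. Using the tree-of-spaces picture together with the projection and convexity arguments of Lemma \ref{wall1-subset-nbh-wall2} and Corollary \ref{wall-adjacent-to-chamber1}, the set $N_k(\overline{W}) \cap C$ should be contained in a $k'$-neighborhood of a bounded subset $B$ of $W_3 \cap C$. If $W_3 \notin \{W_1, W_2\}$, the first case's argument, applied with $W_3$ in place of $\overline{W}$, produces the path. If $W_3$ coincides with $W_1$ (symmetrically $W_2$), I choose $w_1 \in W_1 \cap C$ at $\dd_{W_1}$-distance larger than $k'$ from $B$, so that $w_1 \notin N_k(\overline{W})$, and then connect $w_1$ to any $w_2 \in W_2 \cap C$ by a path in $C$ avoiding the bounded set $N_{k'}(B)$; this is possible because $C$ is contractible of dimension at least $2$.

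The main obstacle I anticipate is verifying the boundedness of $B$ in the subcase $W_3 \in \{W_1, W_2\}$. To handle it, I would observe that any point $p \in \overline{W}$ within distance $k$ of $C$ must lie within distance $k$ of $W_1$, since every path from $\overline{W}$ to $C$ crosses $W_1$. I would then work inside the chamber $C'$ on the other side of $W_1$: there, $W_1$ and the wall $W_4$ that is the gate from $C'$ toward $\overline{W}$ correspond to two distinct boundary horospheres of the base of $C'$, whose pairwise nearest-point projections are single points by the strict convexity of horoballs in a pinched Hadamard manifold. Points of $\overline{W}$ close to $W_1$ are thus forced to lie close to the foot of this cross-horosphere geodesic, yielding the required bounded projection (with quantitative control supplied by Theorem \ref{distance-in-horospheres} and Lemma \ref{Lemma4.4Farb}).
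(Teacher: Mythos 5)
Your overall case division (gate wall $W_{3}$ equal to or different from $W_{1},W_{2}$) parallels the paper's, but there is a genuine gap in the subcase $W_{3}\in\{W_{1},W_{2}\}$, which is exactly the hard case (Case 2 of the paper's proof). You claim that $N_{k}(\overline{W})\cap C$ lies in a $k'$-neighborhood of a \emph{bounded} subset $B$ of $W_{3}\cap C$, justified by nearest-point projections between the two boundary horospheres of the base of the adjacent chamber $C'$. That argument only controls the base direction. Writing $W_{1}\cap C'\cong O_{1}'\times \til{N}'$ and $\overline{W}\supseteq \overline{O}'\times\til{N}'$ in the coordinates of $C'$, a point $(b,\nu)\in W_{1}$ is within distance $k$ of $\overline{W}$ as soon as $d_{\til{B}'}(b,\overline{O}')$ is small, with \emph{no} restriction on the fiber coordinate $\nu$, because the fiber coordinate of the target point in $\overline{W}$ can be chosen to match. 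Hence the set you must avoid is, up to bounded error, $(\text{bounded set in } O_{1}')\times\til{N}'$: a neighborhood of a fiber, unbounded whenever the adjacent piece has a positive-dimensional nilmanifold fiber, which the hypotheses of the Lemma allow. This is precisely why the paper's Case 2 does not argue with a bounded set: it chooses a fiber $\overline{F}$ of $\overline{C}\cap W_{2}$, takes the corresponding subspace $F_{2}\subseteq W_{2}\cap C$, restricts to a slice $B\times\{p\}$ of $C$ (where $F\cap(B\times\{p\})$ \emph{is} bounded), and only there runs the horosphere construction. Your subsequent step ("avoid the bounded set $N_{k'}(B)$ because $C$ is contractible of dimension at least $2$") would also need repair even if $B$ were bounded, since contractibility alone does not give connectivity of complements; but the unboundedness in the fiber direction is the essential missing idea.

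A secondary, more technical issue is in your adjacent case: the blanket statement that complements of convex sets are path-connected in dimension $\geq 2$ is false (a geodesic or totally geodesic hyperplane is convex), though it does hold for the horoball $N_{k}(\overline{O})$; more seriously, the "sliding off other removed horoballs by nearest-point projection" can push the path back into $N_{k}(\overline{O})$, because the removed horoballs are only uniformly $R$-separated, not $k$-separated for your arbitrary $k$, and projection onto a horosphere can decrease the distance to $\overline{O}$ below $k$. The paper avoids this by constructing the detour explicitly along a horosphere centered at the ideal point $\xi_{3}$ of $O_{3}$ far from $O_{3}$ (Busemann level sets), rather than by projecting an arbitrary path.
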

The proof is analogous to the one of Lemma 7.8 in \cite{FLS}, here we only used  the wall stabilizer $H\in \mathcal{N}(\pi_{1}(M))$. We include the proof for completeness.
\begin{proof}
	As $\overline{H}\notin \{H_{1},H_{2}\}$ we can then assume that  $\overline{W}$ is disjoint from $C\cup W_{1} \cup W_{2}$. We thus need to consider the following two cases:
	\begin{enumerate}
		\item $\overline{W}$ lies in the connected component of $\widetilde{M}\backslash \{W_{1},W_{2}\}$ that contains $C$.
		\item $\overline{W}$ and $C$ lie in different connected components of $\widetilde{M}\backslash \{W_{1},W_{2}\}$.
	\end{enumerate}
	\textbf{Case 1:} If $\overline{W}$ lies in the connected component of $\widetilde{M}\backslash \{W_{1},W_{2}\}$ that contains $C$, then there exists a wall $W_{3}\neq W_{1}, W_{2}$ adjacent to $C$ such that every path that connects $\overline{W}$ with $W_{1}\cup W_{2}$ must pass through $W_{3}$.\\
	Every path that connects $W_{1}$ with $W_{2}$ but that does not pass through $N_{k}(W_{3})$ also avoids $\overline{W}$, because we have assumed that $\overline{W}$ is disjoint from $C\cup W_{1} \cup W_{2}$ and $W_{3}$ is adjacent to $C$. Let $\dd_{C}$ be the path metric on $C$. By Lemma \ref{distance-in-chambers}, it is enough to construct a path $\gamma$ that joins $w_{1}$ with $w_{2}$ and such that for all $t\in [0,l]$ and any given constant $k'>0$ we have $\dd_{C}(\gamma(t),W_{3})\geq k'$. \\ 
	Let $p:C\rightarrow B$ be the projection of chamber $C$ onto its base $B$. For $i=1,2,3$, let $\xi_{i}$ be points at infinity on $B$ and $O_{i}$ the horospheres centered at $\xi_{i}$ defined as follows: 
	\begin{enumerate}
		\item $O_{1}=p(W_{1}\cap C)$
		\item $O_{2}=p(W_{2}\cap C)$
		\item $O_{3}=p(W_{3}\cap C)$
	\end{enumerate}
	\begin{figure}[h]
		\centering
		\includegraphics[scale=0.25]{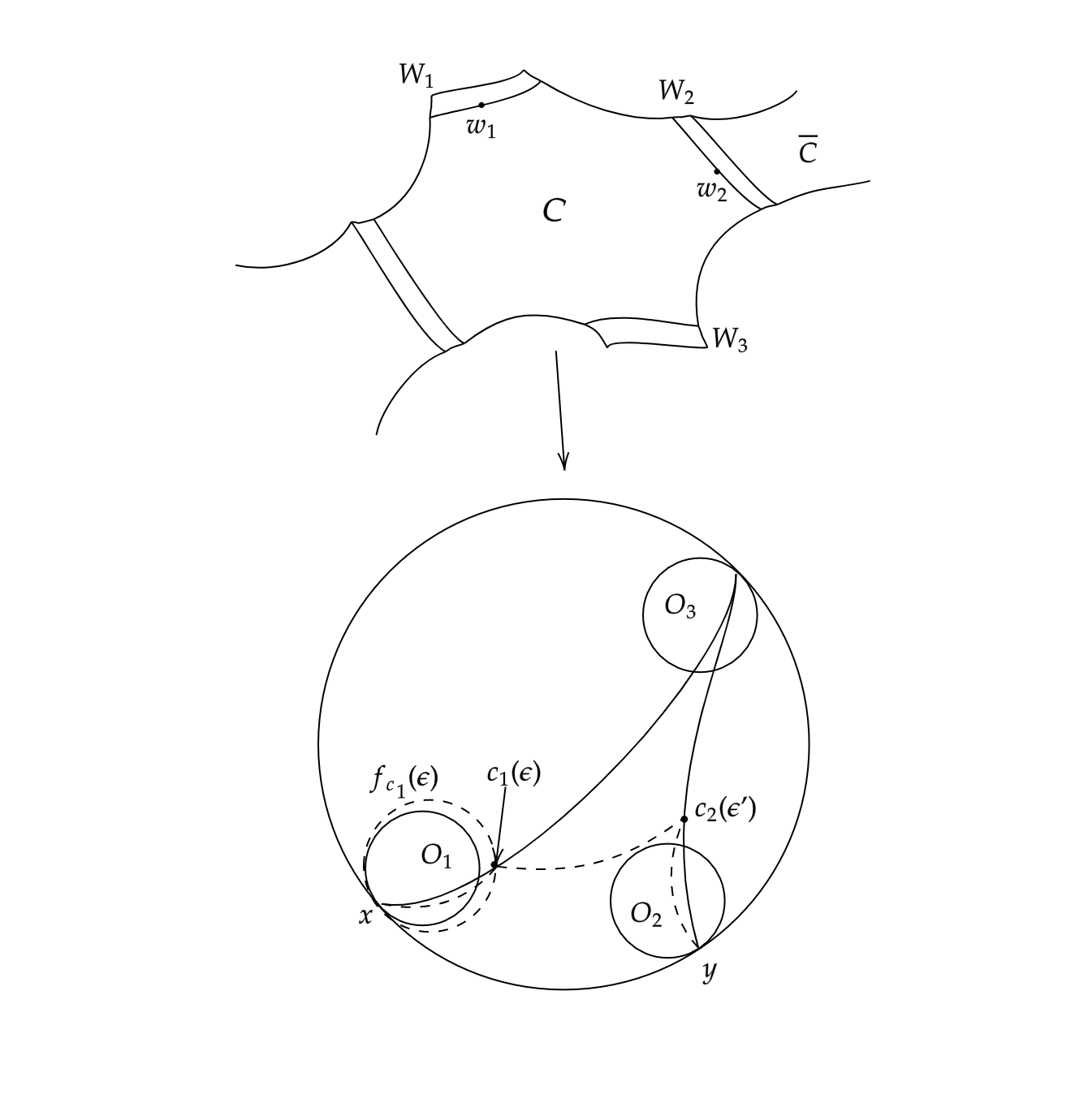}
		\caption{Construction of the projection of the path $\gamma$ between $w_{1}$ and $w_{2}$, from Lemma \ref{third-wall}.}
		\label{fig:path}
	\end{figure}
	Consider points $x\in O_{1}, y\in O_{2}$ and the geodesic rays $c_{1}:[0,\infty]\rightarrow B$ from $x$ to $\xi_{3}$ and $c_{2}:[0,\infty]\rightarrow B$ from $y$ to $\xi_{3}$. For every $t\in [0,\infty]$, we can consider a sequence of horospheres defined by the level sets of the Busemann function $f_{c_{i}}(t)$. For every sufficiently small $\epsilon>0$, we want to construct a path from any point on $O_{1}$ to any other point on $O_{2}$ and such that this path does not intersect $O_{3}$. Consider the concatenation of the following paths:
	\begin{enumerate}
		\item The subpath of $c_{1}$ from $x$ to the intersection point, $c_{1}(\epsilon)$, of one horospheres defined by $f_{c_{1}}(\epsilon)$ and $c_{1}$. 
		\item Consider the horosphere defined by the Bussemann function $f_{c_{3}}(t)$ that goes through the point $c_{1}(\epsilon)$ and call it $O_{c_{3}}$.
		\item The subpath of $c_{2}$ that goes from $y$ to the intersection point, $c_{2}(\epsilon')$, of $c_{2}$ and the horosphere of previous point. 
		\item The path that goes from $c_{1}(\epsilon)$ to $c_{2}(\epsilon')$ trough the horosphere $O_{c_{3}}$.
	\end{enumerate}  
	By construction, the concatenation of these three paths does not intersect $O_{3}$. Let $\gamma:[0,l]\rightarrow C$ the lift of this path to $C$. Therefore there exists a constant $K(\epsilon)$, which depends on $\epsilon$ and tends to $+\infty$ as $\epsilon$ goes to $0$, such that  for every $t\in [0,l]$,  $\dd_{C}(\gamma(t), \overline{W})\geq K(\epsilon)$. With this, we conclude the proof of the first case. \\
	\medskip 
	\textbf{Case 2:} Now, suppose that $\overline{W}$ and $C$ lie in different connected components of $\widetilde{M}\backslash \{W_{1},W_{2}\}$. Then we have that every path joining $W_{1}$ to $\overline{W}$ must pass through $W_{2}$ and that every path joining $W_{2}$ to $\overline{W}$ must pass through $W_{1}$.  Suppose that the first of these possibilities happens, by symmetry, the second case is analogous.\\
	Then, there exists a chamber $\overline{C}$ different to $C$ and adjacent to $W_{2}$. Choose a fiber $\overline{F}$ of $\overline{C} \cap W_{2}$ and let $F_{2}$ be the corresponding subspace of $W_{2}\cap C$. By Lemma \ref{distance-in-chambers}, if $k'$ is a given constant, then we want to construct a path $\gamma$ joining $W_{2}$ to $\overline{W}$ and such that $\dd_{C}(\gamma(t), \overline{F_{2}}) \geq k'$ for every $t\in [0,l]$. \\
	Let $B,F$ be the base and fiber of $C$, in that order. Then, we can choose $p\in F$ such that $B\times \{p\}\subseteq C$ intersects $F$ in a proper subspace of $W\cap (B\times \{p\})$. We know that $B$ can be identified with a negatively curved space without a finite number of non maximal horospheres removed, therefore we can identify $W_{2}\cap (B\times \{p\})$ with a horosphere $O_{2}$ centered at infinity and $W_{1}\cap (B\times \{p\})$ with a horosphere $O_{1}$, also centered at infinity. Using the same strategy of case 1, we can construct a path $\gamma$ as a concatenation of certain special subpaths in such a way that $\gamma$ and the neighbourhood of finite radius $\overline{O}$ of $F\cap (B\times\{p\})$ do not intersect each other. Finally, if the constant $k'$ is large enough, then any path in $B\times\{p\}$ that connects $W_{2}$ and $\overline{W}$ and that avoids the neighbourhood of radius $k'$ of $F\cap (B\times\{p\})$ also avoids the neighbourhood of radius $k'$ of $F$.
\end{proof}
\begin{Corollary}\label{Estab-isomorfismo}
	Let $M_{1},M_{2}$ be two higher graph manifolds as in definition \ref{HGM}. Let $\varphi: \pi_{1}(M_{1})\ra \pi_{1}(M_{2})$ be an isomorphism. If $H_{1}$ is a subgroup of $\pi_{1}(M_{1})$, then $H_{1}$ is the stabilizer of a wall in $\til{M}_{1}$ if and only if $\varphi(H_{1})$ is the stabilizer of a wall in $\til{M}_{2}$.
\end{Corollary}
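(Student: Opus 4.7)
The plan is to derive the corollary from the purely algebraic characterization of wall stabilizers provided by Lemma \ref{NG2}, which says (under the no-surface-pieces hypothesis inherited from the context of Theorem \ref{Isomorphism-preserve-pieces}) that wall stabilizers are exactly the members of the set $\mathcal{N}(\pi_1(M))$. Since $\mathcal{N}(\pi_1(M))$ is defined by properties (maximal nilpotency and Hirsch length) that are invariant under group isomorphism, the corollary will follow once we check that the numerical parameter $n-1$ appearing in its definition is the same for both manifolds.

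The first step is to confirm that $\dim M_1 = \dim M_2$. Higher graph manifolds as in Definition \ref{HGM} are closed aspherical manifolds, so Theorem \ref{Hirsch-vcd} applied to each $\pi_1(M_i)$ gives $\dim M_i = \vcd(\pi_1(M_i))$. Because virtual cohomological dimension is a group-theoretic invariant and $\varphi$ is an isomorphism, $\dim M_1 = \dim M_2 =: n$, so the sets $\mathcal{N}(\pi_1(M_1))$ and $\mathcal{N}(\pi_1(M_2))$ are defined using the same Hirsch-length threshold $n-1$.

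The second step is to show $\varphi(\mathcal{N}(\pi_1(M_1))) = \mathcal{N}(\pi_1(M_2))$. Nilpotency is preserved under isomorphism; maximality among nilpotent subgroups is likewise preserved, because $\varphi$ restricts to a bijection on the poset of nilpotent subgroups; and Hirsch length is an isomorphism invariant (for instance by Theorem \ref{Hirsch-vcd}, or directly from the definition via the number of infinite factors in a cyclic series). Hence $H_1 \in \mathcal{N}(\pi_1(M_1))$ if and only if $\varphi(H_1) \in \mathcal{N}(\pi_1(M_2))$.

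The third step assembles the two ingredients. By Lemma \ref{NG2}, a subgroup of $\pi_1(M_i)$ is a wall stabilizer if and only if it lies in $\mathcal{N}(\pi_1(M_i))$ (the exceptional case (2) of that lemma is excluded by the absence of surface pieces). Chaining the equivalences
\[
H_1 \text{ is a wall stabilizer} \iff H_1\in\mathcal{N}(\pi_1(M_1)) \iff \varphi(H_1)\in\mathcal{N}(\pi_1(M_2)) \iff \varphi(H_1)\text{ is a wall stabilizer}
\]
yields the corollary. The only point requiring care — and the main conceptual obstacle — is the dimension-matching step, since the definition of $\mathcal{N}$ is not intrinsically group-theoretic until one observes that the parameter $n$ can itself be recovered from $\pi_1(M)$ via $\vcd$; once this is recognized, the rest is a formal transport of an algebraic invariant across the isomorphism $\varphi$.
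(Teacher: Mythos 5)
Your proposal is correct, and it rests on the same key ingredient as the paper: the algebraic characterization of wall stabilizers as the elements of $\mathcal{N}(\pi_{1}(M))$ coming from Proposition \ref{NG1} and Lemma \ref{NG2}. The difference is in how that ingredient is deployed. The paper's proof is phrased through the Bass--Serre trees: it asserts that $\varphi(H_{1})$ lands inside a conjugate edge stabilizer $gH_{2}g^{-1}$ of $T_{2}$ (a step it does not justify explicitly, and which tacitly uses exactly the invariance of $\mathcal{N}$ that you spell out) and then recovers equality by pulling $gH_{2}g^{-1}$ back under $\varphi^{-1}$ and invoking maximality. You instead transport the set $\mathcal{N}$ directly across $\varphi$, which makes the argument cleaner and forces you to address two points the paper leaves implicit: that the threshold $n-1$ agrees on both sides, which you settle via $\vcd(\pi_{1}(M_{i}))=\dimm M_{i}$ (Theorem \ref{Hirsch-vcd}) and the isomorphism invariance of $\vcd$, and that case (2) of Lemma \ref{NG2} must be excluded, for which you invoke the absence of surface pieces in both $M_{1}$ and $M_{2}$. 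Note that the corollary as stated in the paper does not carry the no-surface-pieces hypothesis, so strictly speaking your argument proves it only in the setting of Theorem \ref{Isomorphism-preserve-pieces}; but this is a defect of the statement rather than of your proof, since the characterization ``member of $\mathcal{N}$ $\Rightarrow$ wall stabilizer'' genuinely requires that hypothesis, and the paper's own key step needs it for the same reason.
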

\begin{proof}
	Suppose $H_{1}$ is the stabilizer of a wall in $\til{M}_{1}$, then $H_{1}$ is contained in the stabilizer of an edge $e$ of the Bass-Serre tree $T_{1}$ associated to $\til{M_{1}}$. As $\varphi$ is an isomorphism, $\varphi(H)$ is contained in the stabilizer of an edge $e_{2}$ of the Bass-Serre tree $T_{2}$ associated to $\til{M_{2}}$, i.e., it is contained in $gH_{2}g^{-1}$. We only need to show that $\varphi(H_{1})$ coincides with $gH_{2}g^{-1}$. The subgroup $\varphi^{-1}(gH_{2}g^{-1})$ of $\pi_{1}(M_{1})$ contains $H_{1}$ and again it is contained in the stabilizer of an edge of the Bass-Serre tree $T_{1}$. We conclude, $\varphi^{-1}(gH_{2}g^{-1})=H_{1}$.
\end{proof}
\begin{Proposition}(\cite[Proposition 4.13]{FLS})\label{qi-cubrientes}
	Let $M_{1}, M_{2}$ be two higher graph manifolds as in definition \ref{HGM}. Let $f:\til{M}_{1}\ra \til{M}_{2}$ be a $(k,c)-$quasi-isometry and let $g$ be its quasi-inverse. Suppose that there exists a $\lambda$ with the property that for each wall $W_{1}$ of $\til{M}_{1}$, there exists a wall $W_{2}$ of $\til{M}_{2}$ with Hausdorff distance between $f(W_{1})$ and $W_{2}$ bounded by $\lambda$, and that if we change $W_{1}$ with $W_{2}$ we have the same for $g$. \\
	Then, there exists a universal constant $H$, with the property that for each chamber $C_{1}$ of $\til{M}_{1}$, there exists a unique chamber $C_{2}$ in $\til{M}_{2}$ such that the Hausdorff distance between $f(C_{1})$ and $C_{2}$ is bounded by $H$.
\end{Proposition}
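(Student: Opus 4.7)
The plan is to define $C_2$ as the chamber of $\til{M}_2$ whose adjacent walls are exactly the walls paired (under the Hausdorff-distance hypothesis) with the walls adjacent to $C_1$, and then verify this chamber exists, is unique, and has $f(C_1)$ within uniformly bounded Hausdorff distance. Uniqueness of $C_2$ is immediate from Corollary \ref{uni-chambers}: two chambers of $\til{M}_2$ both satisfying the conclusion would be within mutual Hausdorff distance $2H$, hence equal. Moreover, the wall-matching $W\mapsto \til{W}$ is a bijection between walls of $\til{M}_1$ and walls of $\til{M}_2$: uniqueness of the paired wall in either direction follows from Lemma \ref{wall1-subset-nbh-wall2}, and the composition is the identity because $g\circ f$ is uniformly close to the identity map.

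For existence, write $\mathcal{W}(C_1)$ for the family of walls of $\til{M}_1$ adjacent to $C_1$ and $\til{W}$ for the wall of $\til{M}_2$ paired with $W\in\mathcal{W}(C_1)$. The key claim is that $\{\til{W}\ :\ W\in\mathcal{W}(C_1)\}$ consists of walls sharing a common adjacent chamber $C_2$. Suppose, for contradiction, that two such walls $\til{W}^{(a)},\til{W}^{(b)}$ do not share an adjacent chamber; then in the Bass-Serre tree $T_2$ their edges have no common endpoint, so some third wall $\overline{W}_2$ separates them in $\til{M}_2$. By the symmetric pairing, $\overline{W}_2$ corresponds to a wall $\overline{W}_1$ of $\til{M}_1$, distinct from $W^{(a)}$ and $W^{(b)}$ by Lemma \ref{wall1-subset-nbh-wall2}. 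By Lemma \ref{NG2} the stabilizer of $\overline{W}_1$ lies in $\mathcal{N}(\pi_1(M_1))$, so Lemma \ref{third-wall}, applied with arbitrarily large $k$, furnishes paths inside $C_1$ from points of $W^{(a)}\cap C_1$ to points of $W^{(b)}\cap C_1$ avoiding $N_k(\overline{W}_1)$. Pushing these paths forward by the $(k,c)$-quasi-isometry $f$ produces paths in $\til{M}_2$ joining points within $\lambda$ of $\til{W}^{(a)}$ to points within $\lambda$ of $\til{W}^{(b)}$ while remaining at uniformly large distance from $\overline{W}_2$, contradicting the fact that $\overline{W}_2$ separates these two walls. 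Hence $C_2$ exists.

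It remains to bound the Hausdorff distance between $f(C_1)$ and $C_2$. For the inclusion $f(C_1)\subseteq N_H(C_2)$, suppose $x\in C_1$ and $f(x)$ lies in a chamber $C_2''\neq C_2$; the first wall crossed by a tree-geodesic in $T_2$ from $C_2$ to $C_2''$ is adjacent to $C_2$, hence equals $\til{W}^{(i)}$ for some $W^{(i)}\in\mathcal{W}(C_1)$, and $f(x)$ lies on the side of $\til{W}^{(i)}$ opposite to $C_2$. Applying $g$ and using the symmetric wall-matching together with $g\circ f\approx\mathrm{id}$, one concludes that if $\dd(f(x),C_2)$ exceeds a threshold depending only on $k,c,\lambda$, then $x$ must lie on the side of $W^{(i)}$ opposite to $C_1$, contradicting $x\in C_1$. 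The reverse inclusion $C_2\subseteq N_H(f(C_1))$ follows symmetrically by running the same argument with $g$ in place of $f$. The main obstacle throughout is the common-chamber claim: transporting the interior path-avoidance furnished by Lemma \ref{third-wall} across the quasi-isometry, so as to reconstruct the combinatorial adjacency of walls on the target side, is where the Bass-Serre tree structure, the wall-stabilizer characterization, and the symmetric wall-matching hypothesis must be combined.
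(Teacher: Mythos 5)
Your construction of the common chamber $C_2$ is essentially the paper's argument: assume two matched walls are separated by a third wall of $\til{M}_2$, transport that wall back to a wall $\overline{W}_1$ of $\til{M}_1$ via the symmetric hypothesis (using Lemma \ref{wall1-subset-nbh-wall2} to see it differs from $W^{(a)},W^{(b)}$), and contradict Lemma \ref{third-wall}; this part is fine, modulo the usual discretization of $f(\gamma)$ into a coarse path. The genuine gap is in the Hausdorff-distance estimate. The hypothesis controls $f$ only on walls, so to bound $\dd(f(x),C_2)$ for an arbitrary $x\in C_1$ you must use that every point of a chamber lies within a uniform distance of the union of its adjacent walls (the paper's constant $h=\max_i d_i/2$, coming from compactness of the pieces); your proposal never invokes this, and without it the deep interior of $C_1$ is simply not controlled by the wall-matching hypothesis. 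In its place you assert that if $\dd(f(x),C_2)$ exceeds a threshold then ``$x$ must lie on the side of $W^{(i)}$ opposite to $C_1$.'' That step fails: knowing $f(x)$ is far from, and on the far side of, $\til{W}^{(i)}$ pulls back through $g$ (using $g\circ f\approx\mathrm{id}$ and Lemma \ref{wall1-subset-nbh-wall2}) only to the statement that $x$ is far from $W^{(i)}$, which carries no side information and is perfectly compatible with $x\in C_1$, since a chamber contains points arbitrarily far from any fixed adjacent wall. Once the bounded-inradius fact is added, your contradiction scaffolding collapses into the paper's direct computation $\dd(f(x),C_2)\leq kh+c+\lambda$ (plus the wall width), and the reverse inclusion is obtained by running the same estimate for $g$, identifying the resulting chamber with $C_1$ via Corollary \ref{uni-chambers}, and then using the quasi-inverse property, as the paper does.

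A secondary unproved step: your phrase ``the first wall crossed by a tree-geodesic from $C_2$ to $C_2''$ is adjacent to $C_2$, hence equals $\til{W}^{(i)}$ for some $W^{(i)}\in\mathcal{W}(C_1)$'' presupposes that every wall adjacent to $C_2$ is matched to a wall adjacent to $C_1$. The existence step only gives the other containment; surjectivity needs the symmetric argument applied to $g$ and $C_2$, the identification of the resulting chamber with $C_1$ (a tree argument, which moreover needs separate care when $C_1$ has a single adjacent wall), and the fact that the two wall-matchings are mutually inverse. You assert these points but do not prove them; in the paper's direct approach this surjectivity is never needed.
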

We include the proof for completeness, following the one presented by \cite{FLS}, which also applies to $M$.
\begin{proof}
	Let $W_{1},W_{1}'$ be walls adjacent to a fixed chamber $C_{1}\subseteq \widetilde{M}_{1}$. By hypothesis, there exist two walls $W_{2},W_{2}'$ in $\widetilde{M_{2}}$ such that $f(W_{1})$ and $f(W_{1}')$ are at finite Hausdorff distance of $W_{2}$ and $W_{2}'$ respectively, and by Lemma \ref{wall1-subset-nbh-wall2} these walls are unique.\\
	We want to prove that there exists a chamber $C_{2}\subseteq\widetilde{M_{2}}$ such that $W_{2}$ and $W_{2}'$ are adjacent walls of $C_{2}$.\\
	Suppose that there exists a wall $P_{2}\subseteq\widetilde{M_{2}}$ different to  $W_{2}$ and $W_{2}'$ such that every path which connects $W_{2}$ with $W_{2}'$ crosses $P_{2}$. Then, there exists a wall $P_{1}\subseteq\widetilde{M_{1}}$ such that $f(P_{1})$ is at most at finite distance $\lambda$ of $P_{2}$. 
	Since $f$ and $g$ are quasi-isometries, if $P_{2}$ separates $W_{2}$ and $W_{2}'$, then there exists a constant $D>0$ such that every path which connects $W_{1}$ with $W_{1}'$ intersects $N_{D}(P_{1})$. However, by Lemma \ref{third-wall} this is a contradiction. Therefore $W_{2}$ and $W_{2}'$ are adjacent walls to $C_{2}$.\\
	Let $d_{i}$ be the diameter of a chamber $Z_{i}$ and let $h$ be the maximum of $d_{i}/2$. Then for each $p_{1}\in C_{1}$, there exists $p_{1}'\in W_{1}$ with $d(p_{1},p_{1}')\leq h$, here $W_{1}$ is a adjacent wall to $C_{1}$. Thus
	\begin{equation*}
		\begin{array}{rcl}
		d(f(p_{1}),C_{2})& \leq &d(f(p_{1}),f(p_{1}'))+d(f(p_{1}),C_{2})\\
		& \leq & kh+c+\lambda.
		\end{array}
	\end{equation*}
	From which we obtain that $f(C_{1})$ is in the neighbourhood of radius $kh+c+\lambda$ of $C_{2}$. In a similar way, we can see that $g(C_{2})$ is in the neighbourhood of radius $kh+c+\lambda$ of some chamber $C_{1}'$, but by Corollary \ref{uni-chambers} $C_{1}=C_{1}'$. \\
	Since $g$ is the quasi-inverse of $f$, if $q_{2}\in C_{2}$, then $d(q_{2},f(g(q_{2})))\geq c$ and also, there exists $q_{1}\in C_{1}$ with $d(g(q_{2}),q_{1})\geq kh+c+\lambda$. We now want to estimate the distance between each element $q_{2}\in C_{2}$ and $f(q_{1})$, where $q_{1}\in C_{1}$:
	\begin{equation*}
		\begin{array}{rcl}
		d(q_{2},f(q_{1})) & \leq & d(q_{2},f(g(q_{2})))+d(f(g(q_{2})),f(q_{1}))\\
		& \leq & c+kd(g(q_{2}),q_{1})+c\\
		& \leq & 2c+k(kh+c+\lambda)
		\end{array}
	\end{equation*}
	With $L=2c+k(kh+c+\lambda)$ we obtain the result. Finally, the uniqueness of $C_{2}$ is a consequence of Lemma \ref{uni-chambers}.
\end{proof}
We are now ready to prove Theorem \ref{Isomorphism-preserve-pieces}.
\begin{proof}[Proof of Theorem \ref{Isomorphism-preserve-pieces}]
	Let $\Lambda_{1}<\pi_{1}(M_{1})$ be the fundamental group of a piece $Z_{1}$ of $M_{1}$. As a consequence of Proposition \ref{qi-cubrientes} and the Milnor-Svar$\check{\textrm{c}}$ Lemma (Proposition \ref{lemaMS}), the Hausdorff distance $\dd_{H}(\varphi(\Lambda_{1}), g\Lambda_{2}g^{-1})$ is bounded above by $L$ for some $\Lambda_{2}<\pi_{1}(M_{2})$ which is the fundamental group of a piece in $M_{2}$ and some $g\in\pi_{1}(M_{2})$. \\
	Without loss of generality, we may assume that $g=id$.\\
	If $h\in\Lambda_{1}$, we obtain that: \[\varphi(h)\varphi(\Lambda_{1})=\varphi(h\Lambda_{1})=\varphi(\Lambda_{1})\]\\
	Since the Hausdorff distance $\dd_{H}(\varphi(\Lambda_{1}),\Lambda_{2})$ is bounded above, then $\varphi(h)\Lambda_{2}$ is at bounded Hausdorff distance of $\Lambda_{2}$. By Milnor-Svar$\check{\textrm{c}}$ Lemma, if $C_{2}$ is a chamber fixed by $\Lambda_{2}$, then $\dd_{H}(\varphi(h)C_{2}, C_{2})$ is finite. This yields $\varphi(h)C_{2}=C_{2}$ and $\varphi(h)\in\Lambda_{2}$, and therefore $\varphi(\Lambda_{1})\subseteq \Lambda_{2}$.\\
	Using the quasi-inverse $\varphi^{-1}$ of $\varphi$ we can prove that $\varphi^{-1}(\Lambda_{2})\subseteq\Lambda_{1}$.\\
	By Proposition \ref{qi-cubrientes}, for $h\in \Lambda_{1}$ the Hausdorff distance $\dd_{H}(\varphi^{-1}(\Lambda_{2}),h\Lambda_{1}h^{-1}) < H$. \\
	Again, we can assume that $h=id$, so for $g\in\Lambda_{2}$ we have that
		$$\varphi^{-1}(g)\varphi^{-1}(\Lambda_{2})=\varphi^{-1}(g\Lambda_{2})=\varphi^{-1}(\Lambda_{2}).$$
	Then, as $\dd_{H}(\varphi^{-1}(\Lambda_{2}),h\Lambda_{1}h^{-1})<H$, we find that $\dd_{H}(\varphi^{-1}(g)\cdot \Lambda_{1},\Lambda_{1})$ is bounded. \\
	If $\Lambda_{1}$ fixes a chamber $C_{1}\in \widetilde{M}_{1}$, then $\dd_{H}(\varphi^{-1}(g)(C_{1}), C_{1})< \infty$. 
	Therefore, by Corollary \ref{uni-chambers} $\varphi^{-1}(g)(C_{1})=C_{1}$, so $\varphi^{-1}(g)\in \Lambda_{1}$ and $\varphi^{-1}(\Lambda_{2})\subseteq \Lambda_{1}$. \\
	\indent We conclude that $\varphi(\Lambda_{1})=\Lambda_{2}$.
\end{proof}

\section{Cusp-Decomposable manifolds}\label{section4}
	
In this section we will prove Theorem \ref{quasi-isometric-embeding} for cusp-decomposable manifolds. \\
\indent Frigerio, Lafont and Sisto mention that except for Proposition 7.8 all the results of their section VII \cite{FLS} can be modified to hold for cusp-decomposable manifolds. They mention that a proof of Proposition 7.8 for cusp-decomposable manifolds can not be similar to the one that they presented, (Remark 7.9 \cite{FLS}). Armed with knowledge of the electric space associated to a pinched Hadamard manifold, we will prove the corresponding result to Proposition 7.8 for the case of cusp decomposable manifolds in Proposition \ref{good-path}.\\
\indent Tam Nguy$\tilde{\hat{\textrm{e}}}$n-Phan described the following family of graph manifolds that are a subfamily of the manifolds in Definition \ref{HGM} where the pieces are all pure pieces \cite{Tam}. \\
\indent Let $V$ be a locally symmetric, complete, finite volume, noncompact, connected, negatively curved manifold of dimension $n\geq 3$. It is know that $V$ has a finite number of cusps and each cusp is diffeomorphic to $S\times [0,\infty]$, where $S$ is a compact $(n-1)-$dimension manifold \cite{Eberlein}. Let $b$ be large enough so that the boundary components $S\times \{b\}$ of different cusps do not intersect each other. Now delete $S\times (b,\infty)$ from each cusp, then the resulting space $Z$ is a compact manifold with boundary. The lifts of the boundaries components of $Z$ are horospheres in $\widetilde{Z}$. This manifold is called a \textbf{bounded cusp manifold with horoboundary}.
\begin{Definition}(Tam Nguyen Phan,\cite{Tam})\label{CDM}
	A \textbf{\textit{cusp decomposable manifold} $M$} is a manifold which is obtained by taking a finite number of bounded cusp manifolds with horoboundary $Z_{i}$ and identifying their horoboundaries using affine diffeomorphisms. 
\end{Definition}
Throughout this section $M$ will denote a cusp-decomposable manifold.\\
\indent Let $M$ be a cusp-decomposable manifold of dimension $n$. The universal cover $\widetilde{M}$ has a natural structure as a tree of spaces as follows. A \emph{chamber} $C\in \widetilde{M}$ will be the preimage of a connected component in $\widetilde{M}$ of $Z$ without the deleted cusp $S\times (b,\infty)$. A wall $W\in\widetilde{M}$ will be the preimage of a connected component of $S\times[0,b]$. \\
\indent For each wall $W$ of $\til{M}$ its boundary can be decomposed into two connected components  $W_{+}$ and $W_{-}$ that are the intersection of a chamber with one adjacent wall. We will call each of the two components $W_{+}, W_{-}$ the \textit{thin walls} associated to $W$. Denote by $\dd_{\pm}$ the path metric of $W_{\pm}$, induced by the restriction of the Riemannian structure of $\til{M}$. For each wall $W$, let $s_{W}:W_{+}\ra W_{-}$ be a map that sends $p\in W_{+}$ to the point $s_{W}(p)\in W_{-}$ which is joined to $p$. We think of $s_{W}$ as the map that glues the two pieces, $W_{+}, W_{-}$, together.
\begin{Lemma}\label{walls-embedding-in-chambers}
	Let $M$ be a cusp-decomposable manifold as in definition \ref{CDM} and let $W$ be a wall in $\til{M}$. If $C$ is a chamber that contains $W_{+}$, then the inclusion of $(W_{+},\dd_{W_{+}})\hookrightarrow (C,\dd_{C})$  is an isometry.
\end{Lemma}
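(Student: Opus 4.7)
The plan is to establish the two inequalities $\dd_{C}(x,y)\le \dd_{W_+}(x,y)$ and $\dd_{W_+}(x,y)\le \dd_{C}(x,y)$ for arbitrary $x,y\in W_+$. The first is immediate from the definitions: any rectifiable path in $W_+$ is a rectifiable path in $C$ of the same length, so the infimum defining $\dd_{C}(x,y)$ is taken over a larger family of competitors than that defining $\dd_{W_+}(x,y)$. All the content lies in the reverse inequality.

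To prove $\dd_{W_+}(x,y)\le \dd_{C}(x,y)$, I would first identify the chamber $C$ with a connected component of $\widetilde V\setminus \bigsqcup_i H_i$, where $\widetilde V$ is the universal cover of the locally symmetric, pinched negatively curved cusp piece underlying $C$, and the $H_i$ are the open horoballs removed when forming the bounded cusp manifold. Under this identification $W_+$ is the boundary horosphere of one closed horoball $\overline H$, and the open horoball $H=\interior{\overline H}$ is disjoint from $C$. Since horoballs in pinched Hadamard manifolds are closed convex subsets, the nearest-point projection $\pi\co \widetilde V\to \overline H$ is well defined and $1$-Lipschitz with respect to the Riemannian distance on $\widetilde V$. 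Because $C\cap H=\emptyset$, the restriction $\pi|_C$ takes values in $\partial \overline H=W_+$ and fixes $W_+$ pointwise.

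Given a rectifiable path $\gamma\co [0,1]\to C$ from $x$ to $y$, the $1$-Lipschitz property of $\pi$ gives, at the level of polygonal approximations, $\ell_{\widetilde V}(\pi\circ \gamma)\le \ell_{\widetilde V}(\gamma)$, where $\ell_{\widetilde V}$ denotes length in the ambient Riemannian metric. Since $C$ and $W_+$ both carry the Riemannian metric induced from $\widetilde M$, ambient length coincides with intrinsic length on each, so $\ell_{W_+}(\pi\circ\gamma)\le \ell_{C}(\gamma)$. As $\pi\circ\gamma$ is a path in $W_+$ from $x$ to $y$, passing to the infimum over $\gamma$ yields $\dd_{W_+}(x,y)\le \dd_{C}(x,y)$, which combines with the trivial direction to give the desired isometry.

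The only step that feels slightly delicate is verifying that $\pi|_C$ actually lands in $W_+$ rather than merely in $\overline H$, and that the projected curve is rectifiable with the asserted length bound. Both reduce to standard facts — namely the convexity of horoballs in pinched negative curvature and the $1$-Lipschitz character of the nearest-point projection onto a closed convex subset of a $\CAT(0)$ space — so no genuine obstacle should arise.
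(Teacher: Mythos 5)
Your argument is correct and is essentially the paper's: the whole point is that the thin wall is a horospherical boundary component of the chamber and is metrically convex in the truncated space, which forces $\dd_{W_{+}}=\dd_{C}$ on $W_{+}$. The only difference is that the paper simply cites Farb for this convexity, whereas you prove it directly via the $1$-Lipschitz nearest-point projection onto the closed horoball, which is a standard and valid way to supply that fact.
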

\begin{proof}
	Recall that if $Z$ is the piece of the manifold $M$ associated to the chamber $C$, then every component of $\partial C$ is a convex horosphere in the metric sense (see \cite{Farb}).\\
	As a consequence, if $W_{+}$ is a thin wall contained in $C$, then $W_{+}$ is a convex hypersurface of $C$. Therefore, the path metric induced in $W_{+}$ by the Riemannian structure on $\til{M}$ is isometric to the restriction of $\dd_{C}$.
\end{proof}
\begin{Proposition}\label{thin-walss-bi-Lipschitz-embedding} Let $M$ be a cusp-decomposable manifold as in definition \ref{CDM}. Let $W$ be a wall in $\widetilde{M}$ and $W_{\pm}$ the thin walls associated to $W$. Then, the inclusion $(W_{\pm},\dd_{{W}_\pm})\hookrightarrow (W,\dd_{W})$ is a bi-Lipschitz embedding and a quasi-isometry.
\end{Proposition}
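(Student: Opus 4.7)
The plan is to prove the bi-Lipschitz embedding in two directions and then deduce coarse density for the quasi-isometry conclusion. The upper inequality $\dd_W(x,y) \leq \dd_{W_\pm}(x,y)$ for $x,y \in W_\pm$ is immediate, since $W_\pm \subset W$: every path in $W_\pm$ is admissible when computing $\dd_W$, and the infimum over a smaller family of paths can only be larger.

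For the lower inequality I plan to build an explicit projection. Decompose the wall as $W = (W \cap C_+) \cup (W \cap C_-)$, where $C_\pm$ are the two adjacent chambers and each piece $W \cap C_\pm$ is a horospherical collar of depth bounded by the truncation height $b$ inside the pinched negatively curved chamber $C_\pm$. By Lemma \ref{walls-embedding-in-chambers}, each $W_\pm$ sits in $C_\pm$ as a convex horosphere (its intrinsic path metric equals the restriction of $\dd_{C_\pm}$), so the nearest-point projection $\pi_\pm : C_\pm \to W_\pm$ is well defined and $1$-Lipschitz. I would then define $\pi : W \to W_+$ by $\pi|_{W \cap C_+} = \pi_+$ and $\pi|_{W \cap C_-} = s_W^{-1} \circ \pi_-$, using the gluing map $s_W : W_+ \to W_-$. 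Applying $\pi$ to any path $\gamma \subset W$ joining $x, y \in W_+$ produces a path in $W_+$ from $x$ to $y$ of length at most $K \cdot \mathrm{length}(\gamma)$ for a uniform constant $K$; taking infima yields $\dd_{W_+}(x,y) \leq K \dd_W(x,y)$. The case of $W_-$ is symmetric.

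The uniform constant $K$ has two sources: the chamber-wise projections $\pi_\pm$ are $1$-Lipschitz by convexity, and the gluing $s_W$, being the lift of an affine diffeomorphism between compact nilmanifold horospheres, is bi-Lipschitz with a uniform constant because $M$ has only finitely many gluing types up to $\pi_1(M)$-equivariance. Alternatively, the distortion can be read directly from the pinched curvature $-b^2 \leq \KK \leq -a^2$: along a collar of depth $t \leq b$, horospherical distances rescale by a factor in $[e^{-bt}, e^{-at}]$, so the inverse projection $W_\pm \hookrightarrow W$ contracts by at most $e^{ab}$. For the quasi-isometry property, every point of $W$ lies within $\dd_W$-distance at most $b$ of $W_\pm$ along a radial geodesic in the collar, so the image is coarsely dense and the bi-Lipschitz embedding upgrades to a quasi-isometry. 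The main technical obstacle is making rigorous both the uniform bi-Lipschitz property of $s_W$ and the uniform distortion estimate across the finitely many collar types; both reduce to the compactness of the gluing data and the affine-nilmanifold structure of horospherical interfaces in a cusp-decomposable manifold.
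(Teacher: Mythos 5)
Your strategy of building an explicit Lipschitz retraction of $W$ onto a thin wall could in principle be carried out, but as written it has a genuine gap. Under the paper's conventions the chambers are the thick parts, the thin walls are boundary horospheres of the chambers, and the wall $W$ is the collar lying between $W_{+}$ and $W_{-}$; hence $W\cap C_{+}$ and $W\cap C_{-}$ are exactly the two boundary hypersurfaces $W_{\pm}$ themselves, and $W\neq (W\cap C_{+})\cup(W\cap C_{-})$: the interior of $W$ meets no chamber. Your map $\pi$, defined chamber-wise through $\pi_{+}$ and $s_{W}^{-1}\circ\pi_{-}$, is therefore only defined on $\partial W=W_{+}\cup W_{-}$, and the central step ``apply $\pi$ to an arbitrary path $\gamma\subset W$ joining $x,y\in W_{+}$'' says nothing about paths running through the interior of the collar, which is precisely where the inequality $\dd_{W_{+}}(x,y)\leq K\,\dd_{W}(x,y)$ has to be established. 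Moreover, the justification ``$\pi_{\pm}$ is $1$-Lipschitz by convexity'' points in the wrong direction: the interior of $W$ lies on the horoball (cusp) side of the horosphere covering $W_{\pm}$, and projecting from inside a horoball onto its boundary horosphere \emph{expands} intrinsic horospherical distances, by an exponential factor in the depth controlled by the pinching; the $1$-Lipschitz property coming from convexity applies only when projecting from the chamber side, where there is nothing left to project. Your alternative estimate is closer to the truth (the expansion is bounded because the collar depth is bounded by the truncation height), but the constant $e^{ab}$ conflates the pinching constants with the truncation parameter, and the two uniformity statements you defer as ``the main technical obstacle''---a uniform bi-Lipschitz constant for the lifted affine gluing $s_{W}$ and uniform distortion over the finitely many collar types---are essentially the content of the proposition, so they cannot be left as remarks.

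For comparison, the paper's proof is soft and avoids all of this: the inclusion is $1$-Lipschitz by definition of the path metrics; since it is equivariant for the cocompact actions of $\pi_{1}(S)$ on $W_{\pm}$ and on $W$ and induces an isomorphism of fundamental groups, the Milnor-Svar$\check{\textrm{c}}$ Lemma (Proposition \ref{lemaMS}) yields $\dd_{W_{\pm}}(x,y)\leq c\,\dd_{W}(x,y)$ whenever $\dd_{W_{\pm}}(x,y)$ exceeds some threshold, and the remaining bounded range is handled by noting that the ratio $\dd_{W_{\pm}}/\dd_{W}$ is a positive continuous function on a set whose quotient by $\pi_{1}(S)$ is compact, hence bounded. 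If you wish to keep your projection argument (which would have the merit of producing an explicit constant), you must define the projection on all of the collar (radially, along the collar fibers), prove the bounded-expansion estimate from the pinched curvature, and prove the uniform bi-Lipschitz property of $s_{W}$; the natural way to do the latter is the same equivariance-plus-cocompactness argument the paper uses.
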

\begin{proof}
	The inclusion $i:W_{\pm}\hookrightarrow W$ is 1-Lipschitz, by definition of the induced path metric. This map induces an isomorphism on fundamental groups, so by the Milnor-Svar$\check{\textrm{c}}$ Lemma $i$ is a quasi-isometry. Therefore $i$ is a bi-Lipschitz embedding at large scale, i.e. there exist constants $b\geq 0$, $c\geq 1$ such that if $\dd_{W_{\pm}}(x,y)\geq b$ then $\dd_{W_{\pm}}(x,y)\leq c \cdot \dd_{W}(x,y)$. \\
	\indent We now only need to analyze the case where $0\leq \dd_{W_{\pm}}(x,y)\leq b$. Let $T$ be the subset of  $W_{\pm}\times W_{\pm}$ defined by the inequality $0\leq \dd_{W_{\pm}}(x,y)\leq b$. Let $Z$ be a piece of $M$ and let $S$ be the infranilmanifold of its boundary. Then, $\til{S}$ is homeomorphic to $W_{+}$ and $W_{-}$. Consider the following action of $\pi_{1}(S)$ on $T$: 
	\begin{equation*}
		\begin{array}{rcl}
		\rho: \pi_{1}(S)\times T & \rightarrow & T\\
		   (g, (p,s_{W}(p))) & \mapsto & (gp, s_{W}(gp))
		\end{array}
	\end{equation*}
	Therefore, $T$ is invariant under this action and the quotient space $T/\pi_{1}(S)$ is compact.\\
	Define a function $f: T \rightarrow \mathbb{R}$ as follows:
	 \[ f(x,y) = \left\{
	 	\begin{array}{ll}
	 	 1 & \textit{for every pair } (p,s_{W}(p))\in (W_{+},W_{-}),\\
	 	 \dd_{{W}_\pm}(x,y)/\dd_{W}(x,y) & \textit{if } (x,y)\in T\backslash \{(p,s_{W}(p))| p\in W_{+} \textit{ and } s_{W}(p)\in W_{-}\}
	 	\end{array}
	 \right.
	 \]
	This is a positive continuous function and the compactness of $T/\pi_{1}(S)$ implies that it is bounded above by some constant $c'$. If $C=\max \{c,c'\}$, then the inclusion map $i$ is C-bi-Lipschitz and we conclude the proof. 
\end{proof}
Let $W_{\pm}$ be a thin wall and $x,y\in W_{\pm}$. Let $\gamma$ be a path which connects $x$ with $y$ in $\til{M}$. We will say that $\gamma$ is a \textit{non-backtracking path} in $W_{\pm}$ if $\gamma$ only intersects the wall  $W_{\pm}$ in its endpoints. \\
\indent The following proposition is a consequence of Lemma 3.2 of \cite{Osin} and the proof is the same as that of the Proposition 7.4 of \cite{FLS}.
\begin{Proposition}\label{paths-horospheres1}
	Let $Z$ be a bounded cusp manifold with horoboundary. Then there exists a constant $\lambda$ that depends only on $Z$ such that the following is true. Let $\gamma\subseteq Z$ be a loop obtained by concatenating a finite number of paths $\alpha_{1},\gamma_{1},...,\alpha_{n},\gamma_{n}$, where
	\begin{itemize}
		\item Each $\alpha_{i}$ is a geodesic in the horosphere $S_{i}\subseteq \partial Z$.
		\item Each $\gamma_{i}$ is a path in $Z$ that connects the final point of $\alpha_{i}$ with the initial point of $\alpha_{i+1}$.
		\item The final points of each $\gamma_{i}$ are in different walls.
	\end{itemize}
	Let $\Lambda\subseteq \{1,...,n\}$ be a subset of indices such that $S_{k}\neq S_{i}$ for each $k\in \Lambda$, $i\in \{1,...n\}$, $i\neq k$. Then,
	\begin{equation*}
		\sum_{k\in\Lambda} L(\alpha_{k}) \leq \lambda\cdot \sum_{i=1}^{n}L(\gamma_{i}).
	\end{equation*}
\end{Proposition}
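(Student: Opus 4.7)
The plan is to reduce the statement to a purely combinatorial estimate in a relatively hyperbolic group, namely $\pi_{1}(V)$ where $V$ is the complete finite-volume pinched negatively curved manifold whose truncation is $Z$. By the Theorem of Farb cited above, $\pi_{1}(V)$ is hyperbolic relative to the collection $\{H_{1},\dots,H_{r}\}$ of cusp subgroups, which opens the door to Osin's Lemma 3.2.

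First I would lift the loop $\gamma$ to a path $\widetilde{\gamma}$ in $\widetilde{Z}\subset\widetilde{V}$, obtaining lifts $\widetilde{\alpha}_{i}$ lying in pairwise distinct horospheres $\widetilde{S}_{i}\subset\widetilde{V}$ together with complementary paths $\widetilde{\gamma}_{i}$ joining them; the endpoints of $\widetilde{\gamma}$ differ by a deck transformation $g\in\pi_{1}(V)$. Since $\gamma$ is a loop, $g$ lies in the stabilizer of the starting horosphere, so projecting $\widetilde{\gamma}$ to the Cayley graph (and then to the coned-off Cayley graph) yields a closed cycle whose peripheral components correspond bijectively to the subpaths $\alpha_{i}$ and whose complementary components correspond to the $\gamma_{i}$. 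The hypothesis that consecutive $\gamma_{i}$ end on different walls, together with the hypothesis on $\Lambda$, translates precisely into the statement that consecutive peripheral components lie in distinct peripheral cosets and that the peripheral components indexed by $\Lambda$ lie in pairwise distinct peripheral cosets of $\pi_{1}(V)$.

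Second, I would use the Milnor--\v{S}varc Lemma (Proposition \ref{lemaMS}) applied to each cusp subgroup $H_{i}$ acting cocompactly by isometries on the corresponding horosphere $\widetilde{S}_{i}$ in $\widetilde{V}$, to see that the Riemannian length of each geodesic $\widetilde{\alpha}_{i}$ inside the horosphere is comparable, up to multiplicative and additive constants depending only on $Z$, with the word length in $H_{i}$ of the element realising $\widetilde{\alpha}_{i}$. By Theorem \ref{distance-in-horospheres}, the path metric on horospheres coming from the ambient pinched Hadamard metric is well controlled, so these quasi-isometric comparisons are uniform over all horospheres in the $\pi_{1}(V)$-orbit. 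Similarly the lengths $L(\gamma_{i})$ are bounded below by the corresponding word lengths of the complementary components (up to uniform constants, using that the basepoints can be taken on $\partial\widetilde{Z}$ so that the identification with the Cayley graph via $\gamma\mapsto\gamma\cdot x$ is a quasi-isometry). With these comparisons the desired inequality is reduced to a statement about the lengths of peripheral vs.\ non-peripheral components of a cycle in the Cayley graph.

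Finally, I apply Osin's Lemma 3.2, which provides exactly such a bound for cycles in a relatively hyperbolic group: under the distinctness condition on the peripheral cosets, the sum of the word lengths of the distinguished peripheral components is bounded above by a constant (depending only on $\pi_{1}(V)$ and the peripheral structure) times the sum of the lengths of the complementary components. Absorbing the quasi-isometric constants from the previous step produces the desired $\lambda=\lambda(Z)$. The main obstacle will be the careful bookkeeping needed to pass between the Riemannian picture in the horospheres of $\widetilde{Z}$ and the combinatorial picture in the Cayley graph of $\pi_{1}(V)$, in particular verifying that truncating horoballs to form $\widetilde{Z}$ does not distort the horospherical geometry, and making sure that the ``distinct coset'' condition required by Osin's lemma is matched exactly by the ``distinct horosphere'' hypothesis on $\Lambda$.
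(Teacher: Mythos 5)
Your overall strategy is the one the paper itself uses: its proof of Proposition \ref{paths-horospheres1} is simply the citation of Lemma 3.2 of \cite{Osin} together with the remark that the argument is the same as for Proposition 7.4 of \cite{FLS}, and that argument is exactly your outline (Farb's relative hyperbolicity of $\pi_{1}(V)$ with respect to the cusp subgroups, Milnor--\v{S}varc comparisons between Riemannian and word lengths, and Osin's bound on isolated peripheral components of a cycle in the relative Cayley graph).

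There is, however, one step in your reduction that fails as written. You lift the loop $\gamma$ from $Z$ to $\widetilde{Z}$ and assert that, ``since $\gamma$ is a loop, $g$ lies in the stabilizer of the starting horosphere,'' so that the projected path closes up to a cycle. The deck transformation $g$ is the class of $\gamma$ in $\pi_{1}(Z)$, and it stabilizes the initial lifted horosphere only if $\gamma$ is freely homotopic into the boundary component $S_{1}$, which is not implied by the hypotheses; without this, the image in the (coned-off) Cayley graph is a path from $1$ to $g\neq 1$, not a cycle, and Osin's lemma does not apply. The underlying issue is the reading of the statement: interpreted literally downstairs in the compact $Z$ the proposition is either trivial (if ``geodesic'' means minimizing, since then each $L(\alpha_{i})\leq\operatorname{diam}(S_{i})$ while each $L(\gamma_{i})$ is bounded below by the distance between distinct boundary components) or false (a locally geodesic $\alpha_{1}$ wrapping many times around the compact infranilmanifold $S_{1}$, with short $\gamma_{1},\alpha_{2},\gamma_{2}$, violates the inequality). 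As in Proposition 7.4 of \cite{FLS}, the intended setting is the neutered universal cover, where the $S_{i}$ are genuine horospheres corresponding to peripheral cosets; there no lifting or closing-up is needed, the hypothesis on $\Lambda$ translates verbatim into isolation of the corresponding components (note also that you do not get, and do not need, \emph{all} lifted horospheres pairwise distinct, only that the $\Lambda$-indexed ones differ from every other one), and the rest of your plan is the cited proof. Two small bookkeeping points you should still record: Osin's lemma bounds the sum by $K$ times the total length of the cycle in the relative graph, so the extra contribution of the $n$ peripheral edges must be absorbed using $L(\gamma_{i})\geq R>0$ for paths joining distinct horospheres; and the comparison of $L(\alpha_{k})$ with the ambient word length of the corresponding peripheral element uses that horospheres are totally geodesic for the path metric of the neutered space (equivalently, the undistortion of the cusp subgroups), not only the Milnor--\v{S}varc Lemma (Proposition \ref{lemaMS}) for $H_{k}$ acting on its horosphere.
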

We say that $\gamma$ is \textit{minimal}, if for each chamber $C$ the set $\gamma \cap \mathring{C}$ is only a finite collection of paths and each of these paths connects different walls of $C$. Moreover, we say that $\gamma$ is \textit{good} if it is minimal and for each thin wall $W_{\pm}$ contained in $C$, there are at most two final points of paths in $\gamma \cap \mathring{C} $ that belong to $W_{\pm}$. Another characterization of good paths is as follows. Let $W_{\pm}$ be a thin wall contained in a wall $W$. A path  $\gamma:[t_{0},t_{1}]\rightarrow \widetilde{M}$ is \textit{external} to $W_{\pm}$ if $\gamma(t_{0}),\gamma(t_{1})\in W_{\pm}$ and $\gamma|_{(t_{0},t_{1})}$ is supported on $\widetilde{M}\backslash W$.  Let $\gamma$ be a minimal path and $n$ be the number of external subpaths of $\gamma$ to $W_{\pm}$, the \emph{exceeding number} of $\gamma$ on $W_{\pm}$ is defined as $\max \{0,n-1\}$. The exceeding number $e(\gamma)$ of $\gamma$ is defined as the sum over all the thin walls of the exceeding number of $\gamma$. Denote by $j(\gamma)$ the sum over all the chambers of $\widetilde{M}$ of the number of connected components of $\gamma\cap \mathring{C}$. A path $\gamma$ is good if its minimal and $e(\gamma)=0$. 
\begin{Lemma}\label{auxiliar-goodpath}
	 Let $M$ be a cusp-decomposable manifold as in definition \ref{CDM}. Let $x,y\in W\subset \widetilde{M}$ be two points in the same wall and $\beta\geq 1$. Let $D$ be the electric constant of Lemma \ref{electric4.8}. If $\dd(x,y)\leq D$ then there exists a good path $\gamma\in \widetilde{M}$ from $x$ to $y$ such that $L(\gamma)\leq \beta \dd(x,y)$.
\end{Lemma}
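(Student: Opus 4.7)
The approach is to begin with the $\widetilde{M}$-geodesic from $x$ to $y$, modify it into a good path, and use a compactness argument to control the length.

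Let $\alpha$ denote the geodesic in $\widetilde{M}$ from $x$ to $y$, so that $L(\alpha)=\dd(x,y)\le D$. The path $\alpha$ may fail to be good in two ways: within some chamber $C$ a subarc of $\alpha\cap\mathring{C}$ may start and end on the same wall (violating minimality), or $\alpha$ may enter the same thin wall $W'_{\pm}$ more than once from outside (positive exceeding number). Since $\dd(x,y)\le D$, and applying Lemma \ref{electric4.8} to $\alpha$ viewed as a trivial electric quasi-geodesic, each horospherical boundary that $\alpha$ penetrates has entry and exit points within $D$ in the horosphere metric, so every potential defect of $\alpha$ is of bounded local size.

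My first substantive step is to modify $\alpha$ chamber-by-chamber. Within each chamber $C$ through which $\alpha$ passes, any offending subarc of $\alpha\cap\mathring{C}$ is replaced by a geodesic segment inside the appropriate thin wall. The isometric embedding of $(W_{\pm},\dd_{W_{\pm}})$ into $(C,\dd_C)$ provided by Lemma \ref{walls-embedding-in-chambers}, together with the bi-Lipschitz embedding of thin walls into thick walls from Proposition \ref{thin-walss-bi-Lipschitz-embedding}, controls the extra length introduced. After processing every chamber the resulting path $\gamma$ is minimal and has zero exceeding number, hence is good by definition.

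To conclude $L(\gamma)\le \beta\,\dd(x,y)$, I would invoke a compactness argument. The stabilizer $\Stab(W)<\pi_{1}(M)$ acts cocompactly on the set $\{(x,y)\in W\times W:\dd(x,y)\le D\}$. Using Lemma \ref{Lemma4.4Farb} to bound the visual size of horospheres (and hence how far $\alpha$ can stray from $W$), the assignment $(x,y)\mapsto L(\gamma)/\dd(x,y)$ descends to a continuous function on the compact quotient (with a standard argument showing the ratio stays bounded near the diagonal by approaching $1$), yielding a uniform upper bound $\beta_{0}$ depending only on $M$ and $D$. Any $\beta\ge \beta_{0}$ then satisfies the desired inequality.

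The main obstacle I expect is in the second step: one must choreograph the local replacements so that the final curve is simultaneously minimal and has zero exceeding number, without inflating the length. Extra care is also needed at the endpoints $x,y$ themselves (both of which lie in the thick wall $W$), in order to treat correctly the initial and final sub-arcs of $\gamma$ within the definition of a good path.
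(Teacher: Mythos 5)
Your proposal is essentially the paper's argument: the paper also proves this lemma by choosing a good path $\gamma_{xy}$ for every pair $(x,y)\in W\times W$ with $\dd(x,y)\leq D$ and then invoking compactness of this family of pairs to bound $L(\gamma_{xy})/\dd(x,y)$ uniformly, which is exactly your third step. If anything, your version is slightly more careful than the paper's (you pass to the quotient by $\Stab(W)$, which is what actually makes the compactness claim legitimate, and you flag the behaviour near the diagonal), while your chamber-by-chamber modification of the geodesic just makes explicit the paper's unspoken ``pick good paths'' step, in the spirit of Proposition \ref{good-path}.
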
 
\begin{proof}
	Consider the set of pairs $(x,y)\in W$ such that $\dd(x,y)\leq D$. This set is a compact set. Now, pick good paths $\gamma_{xy}$ for each such pair. We have that the map $(x,y)\mapsto L(\gamma_{xy})$ is continuous and compact, therefore it is bounded above by a constant. \\
	So for each pair $x,y$ there exists a constant $\beta_{xy}$ such that 
	$L(\gamma_{xy})\leq \beta_{xy} \dd(x,y)$. Set $\beta$ equal to the maximum over all $\beta_{xy}$ to obtain the result.
\end{proof}
\begin{Proposition} \label{good-path}
	  Let $M$ be a cusp-decomposable manifold as in definition \ref{CDM}. Let $x,y$ be points in the same wall of $\widetilde{M}$. Then there exists a constant $\beta \geq 1$ that depends only on the geometry of $\widetilde{M}$, such that the following is true. There exists a good path $\gamma$ on $\widetilde{M}$ that connects $x$ with $y$ and such that $L(\gamma)\leq \beta \cdot \dd(x,y)$. 
\end{Proposition}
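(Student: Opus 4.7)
The plan is to split by the size of $\dd(x,y)$: for short distances apply Lemma~\ref{auxiliar-goodpath}, and for long distances use the electric space of each chamber to replace the ambient geodesic from $x$ to $y$ by a good path whose length is linearly controlled by $\dd(x,y)$. If $\dd(x,y)\leq D$, with $D$ the constant from Lemma~\ref{electric4.8}, Lemma~\ref{auxiliar-goodpath} already gives the conclusion, so assume $\dd(x,y)>D$.

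Start with the ambient geodesic $\alpha$ in $\widetilde{M}$ from $x$ to $y$ and list the chambers $C_1,\dots,C_m$ it traverses together with the separating walls $W=W_0,W_1,\dots,W_{m-1},W_m=W$. By following the geodesic in the Bass-Serre tree between the two chambers adjacent to $W$, one first reduces to a path $\alpha'$ of length at most $L(\alpha)$ that crosses each wall of $\widetilde{M}$ at most once, using convexity of walls in the tree of spaces. Inside each chamber $C_i$, which is a pinched Hadamard manifold because the pieces of $M$ are pure, the restriction $\alpha'\cap C_i$ is a geodesic between two points lying on the horospheres $W_{i-1},W_i$. Replace $\alpha'\cap C_i$ by a realization in $C_i$ of an electric quasi-geodesic without backtracking in $\wh{C_i}$, taken as the image under $\wh{f}$ of a Cayley-graph geodesic between the corresponding vertices: by Lemmas~\ref{trackMgeodesics}, \ref{electric4.8} and \ref{electric4.9}, this path enters and exits each horosphere it penetrates at points that are at uniformly bounded horospherical distance, so the deep horoball excursions of $\alpha'\cap C_i$ can be replaced by short arcs on the corresponding horospheres. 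The outcome is a path $\gamma_i\subset C_i$ from the entry to the exit of $C_i$ which visits each thin wall of $C_i$ at most once. Concatenating the $\gamma_i$'s produces a path $\gamma$ from $x$ to $y$.

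Minimality of $\gamma$ is automatic from the tree-geodesic choice, and $e(\gamma)=0$ follows from the per-chamber non-backtracking together with the fact that each wall is crossed at most once. For the length bound, the ambient length added by the horospherical shortcuts inside each $C_i$ is controlled by Proposition~\ref{paths-horospheres1}, whose hypothesis that each horosphere is visited at most once is precisely what the non-backtracking property supplies; summing over $i$ gives $L(\gamma)\leq \beta L(\alpha')\leq \beta\,\dd(x,y)$ for some universal $\beta$ depending only on the curvature bounds and the electric constants of the pieces. The step I expect to be the main obstacle is the global compatibility of the per-chamber electric replacements across gluings: a thin wall $W_\pm$ of $C_i$ is identified via the gluing $s_W$ with a thin wall of a neighbouring chamber, so one must ensure that external subpaths of $\gamma_i$ and $\gamma_{i\pm 1}$ do not together produce two external subpaths to the same physical thin wall and thereby push $e(\gamma)$ above zero. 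The non-backtracking choice in each $\wh{C_i}$ together with the tree-geodesic reduction are what make this work, and once they are in place the quantitative estimate reduces to an application of Proposition~\ref{paths-horospheres1} in each piece.
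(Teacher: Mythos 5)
The decisive gap is your very first reduction: you claim that, because $x,y$ lie on the same wall $W$, the ambient geodesic $\alpha$ can be replaced by a path $\alpha'$ with $L(\alpha')\leq L(\alpha)$ that crosses every wall of $\widetilde{M}$ at most once, ``using convexity of walls in the tree of spaces.'' Since the path begins and ends on $W$, it must cross every other wall an even number of times, so crossing each wall at most once really means crossing no other wall at all; your reduction therefore silently deletes every excursion that $\alpha$ makes across other walls and confines $\alpha'$ to $W$ and its two adjacent chambers (note also that the tree geodesic between the two chambers adjacent to $W$ is just the edge $W$, so the chain $C_1,\dots,C_m$ with $W_0=W_m=W$ you describe does not exist as a tree geodesic). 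To cut such an excursion you must replace the portion of $\alpha$ beyond a wall $W'$ by a path running inside $W'$ (or along a thin wall) between the exit and re-entry points, and the intrinsic length of that replacement is not controlled by the length of the excursion: in pinched negative curvature horospherical distances grow exponentially compared with distances measured through the adjacent chamber, so $L(\alpha')\leq L(\alpha)$ is false in general. Controlling exactly this cost is the content of the proposition; any appeal to comparability of the wall metric with the ambient metric would be circular, since Lemma \ref{chambers-then-wals-bilipschitz} and Proposition \ref{walls-bilipschitz} are themselves deduced from Proposition \ref{good-path}. The tools you invoke (Lemmas \ref{trackMgeodesics}, \ref{electric4.8}, \ref{electric4.9} and Proposition \ref{paths-horospheres1}) are applied only chamber by chamber, after the reduction, so they do not repair it; and the obstacle you flag at the end (compatibility of the per-chamber replacements across gluings) sits downstream of this earlier, unaddressed step.

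For contrast, the paper never attempts such a reduction. It keeps the ambient geodesic $\gamma_{0}$ (which is minimal because thin walls are convex) and performs local surgeries: whenever two external subpaths abut the same thin wall of a chamber $C$, the uniform bound on the visual size of horospheres (Lemma \ref{Lemma4.4Farb}) yields a connecting arc in $\mathring{C}$ of length at most $\Delta+1$, so each surgery decreases $j(\gamma)$ by one at bounded additive cost; since distinct horospheres are at distance at least $R$, the number of surgeries is at most $\dd(x,y)/R$, and together with Lemma \ref{auxiliar-goodpath} this gives $L(\gamma)\leq\beta\,\dd(x,y)$ directly. If you want to salvage your chamber-by-chamber electric scheme, you would first need a quantitative way to shortcut wall excursions with uniformly bounded cost per excursion --- which is precisely what the visual-size argument supplies --- before the per-chamber estimates can be summed.
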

\begin{proof}
	\begin{figure}[h]
		\centering
		\includegraphics[scale=0.25]{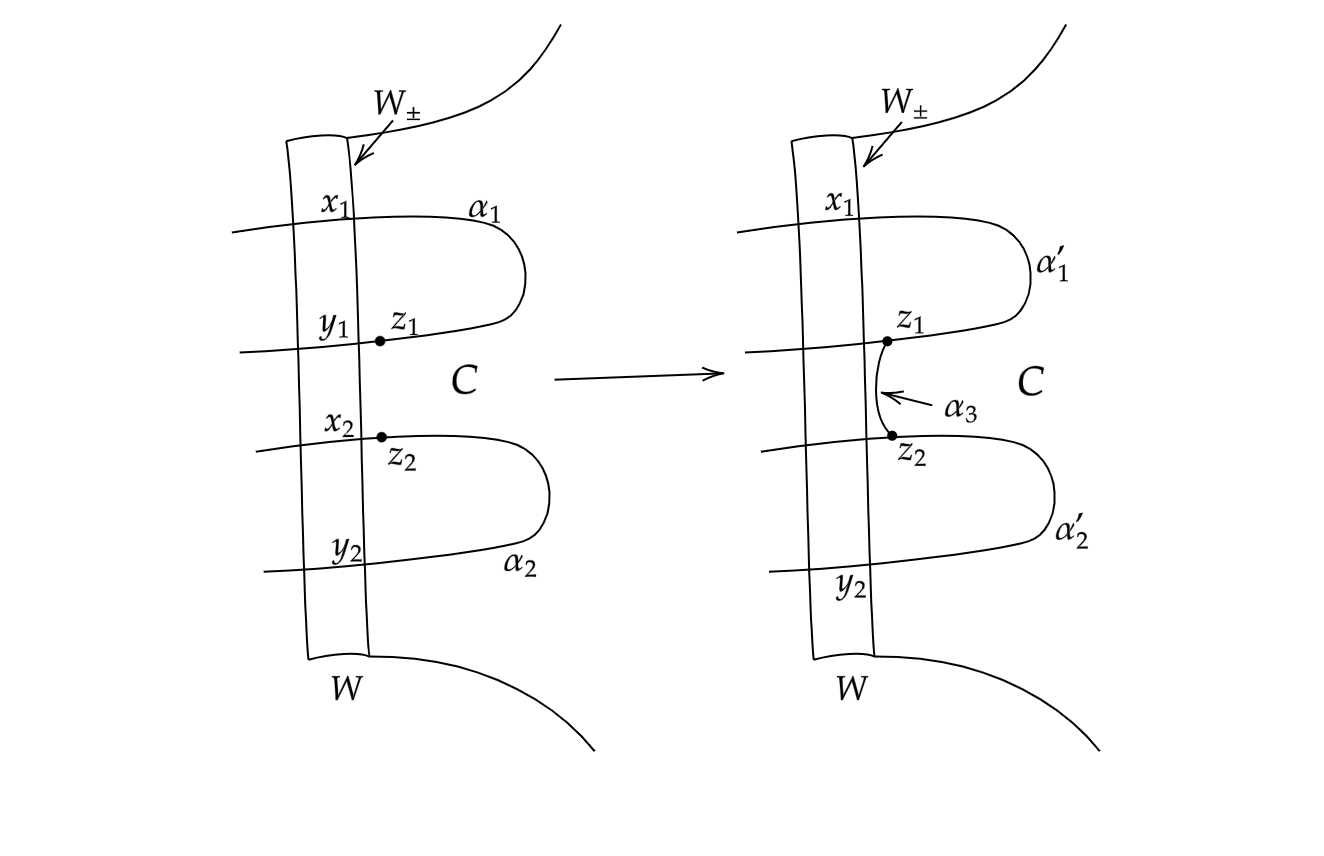}
		\caption{Construction of a good path interpolating between $x_1$ and $y_2$, that goes through the points $z_1$ and $z_2$, from Proposition \ref{good-path}.}
		\label{fig:good-path}
	\end{figure} 
	Let $\wh{M}$ be the electric space associated to $M$. Remember that $R$ is the distance between any two of the deleted horospheres of $V$. 
	By Lemma \ref{Lemma4.4Farb}, we know that the horospheres are visually bounded. Let $\Delta>0$ be a constant such that every horospherical boundary component of each $Z_{i}$ is visually bounded by $\Delta/2$.\\
	\indent Let $\gamma_{0}$ be a geodesic joining $x$ and $y$ in $\widetilde{M}$. As every thin wall is convex, then every geodesic in $\widetilde{M}$ is minimal, therefore $\gamma_{0}$ is minimal.\\
	\indent We want to modify a minimal path $\gamma_{i}$ with $e(\gamma_{i})>0$ by a new minimal path $\gamma_{i+1}$ such that $j(\gamma_{i+1})<j(\gamma_{i})$ and $L(\gamma_{i+1})\leq L(\gamma_{i})+K$, where $K$ is a positive constant. As $j(\gamma_{0})\leq L(\gamma_{0})/R=\dd(x,y)/R$ then after at most $\dd(x,y)/R$ steps, we will obtain a new minimal path $\gamma$ such that $e(\gamma)=0$.\\
	\indent The reader might find it useful to see Figure \ref{fig:good-path} while following the next construction. Suppose that we have some external subpaths $\alpha_{1}=[x_{1},y_{1}]$ and $\alpha_{2}=[x_{2},y_{2}]$ of $\gamma_{i}$, contained in the interior of $C$, and such that $x_{1},y_{1},x_{2},y_{2}\in W_{\pm}$ where $W_{\pm}$ is a thin wall of a chamber $C$. Choose two points $z_{1},z_{2}\in \gamma_{i}\cap \mathring{C}$ in such a way that $L([z_{1},y_{1}])< 1/2$ and $L([y_{2},z_{2}])< 1/2$. The new path $\gamma_{i+1}'$ is constructed as follows. Start by taking the subpath $\alpha_{1}'=[x_{1},z_{1}]$, then consider a path $\alpha_{3}$ in $\mathring{C}$ from $z_{1}$ to $z_{2}$ but such that $L([z_{1},z_{2}])<\Delta +1$ and the subpath $\alpha_{2}'=[z_{2},y_{2}]$. \\
	\indent The path $\gamma_{i+1}'$ satisfies $j(\gamma_{i+1}')=j(\gamma_{i})-1$ and its external number equals zero. So, we have a good path between $x_{1}$ and $y_{2}$. By Lemma \ref{auxiliar-goodpath}, there exists a constant $\beta(\Delta/2)$, such that $L(\gamma_{i+1}')\leq \beta(\Delta/2)\cdot \dd(x_{1},y_{2})$.\\
	\indent We need to carry out these replacements for all the external subpaths of $\gamma_{i}$. After performing all these replacements, we end with a good  path that satisfies the length condition.
\end{proof}
\begin{Lemma}\label{chambers-then-wals-bilipschitz}
	 Let $M$ be a cusp-decomposable manifold as in definition \ref{CDM}. Let $W\subseteq \widetilde{M}$ be a fixed wall, $x,y$ points in $W_{\pm}$, and $C$ the chamber that contains $x,y$. Suppose that there exists $\alpha\geq 1$ such that if $x,y$ can be joined by a path $\gamma$ in $\widetilde{M}$ which does not backtrack in $W_{\pm}$, then 
	\begin{equation*}
		\dd_{C}(x,y)\leq \alpha \cdot L(\gamma).
	\end{equation*}
	Therefore the inclusion of $W$ into $\widetilde{M}$ is a bi-Lipschitz embedding.
\end{Lemma}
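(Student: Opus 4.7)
The plan is to prove the two sides of the bi-Lipschitz inequality separately. The inclusion $(W, \dd_W) \hookrightarrow (\widetilde{M}, \dd)$ is $1$-Lipschitz because every path in $W$ is a path in $\widetilde{M}$, so $\dd(x,y) \le \dd_W(x,y)$. The real content is to produce a constant $K$ with $\dd_W(x, y) \le K \, \dd(x, y)$ for all $x, y \in W$.

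First I would reduce the problem to points lying on a single thin wall. By Proposition \ref{thin-walss-bi-Lipschitz-embedding} the inclusion $(W_+, \dd_{W_+}) \hookrightarrow (W, \dd_W)$ is both a bi-Lipschitz embedding and a quasi-isometry, so any pair $x, y \in W$ lies at uniformly bounded $\dd_W$-distance from a pair $x', y' \in W_+$, and the distances $\dd$, $\dd_W$ and $\dd_{W_+}$ are then mutually comparable up to additive constants. It therefore suffices to bound $\dd_{W_+}(x, y)$ linearly in $\dd(x, y)$ for $x, y$ both lying in $W_+$.

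For such a pair, let $C$ be the chamber containing $W_+$. By Proposition \ref{good-path} there is a good path $\gamma$ from $x$ to $y$ with $L(\gamma) \le \beta \, \dd(x, y)$. Since $\gamma$ is good, hence minimal with vanishing exceeding number on every thin wall, its intersection with $W_+$ is a finite union of points and closed subarcs. I would split $\gamma$ at these intersections into finitely many subpaths $\gamma_1, \ldots, \gamma_n$, each of which either lies entirely in $W_+$ or meets $W_+$ only at its endpoints and is therefore non-backtracking in $W_+$. For a subpath of the first type, $\dd_{W_+}(\gamma_i(0), \gamma_i(1)) \le L(\gamma_i)$ by the very definition of the path metric; for a subpath of the second type the hypothesis yields $\dd_C(\gamma_i(0), \gamma_i(1)) \le \alpha \, L(\gamma_i)$, and by Lemma \ref{walls-embedding-in-chambers} this is the same as the bound $\dd_{W_+}(\gamma_i(0), \gamma_i(1)) \le \alpha \, L(\gamma_i)$. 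Summing and applying the triangle inequality gives
\begin{equation*}
\dd_{W_+}(x,y) \le \sum_i \dd_{W_+}(\gamma_i(0), \gamma_i(1)) \le \alpha \sum_i L(\gamma_i) = \alpha \, L(\gamma) \le \alpha\beta \, \dd(x,y).
\end{equation*}

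Transferring this inequality back to $W$ via the bi-Lipschitz comparison of Proposition \ref{thin-walss-bi-Lipschitz-embedding} produces a quasi-isometric embedding $(W, \dd_W) \hookrightarrow (\widetilde{M}, \dd)$. To upgrade it to an honest bi-Lipschitz embedding I would control nearby pairs by a compactness argument on $(W \times W)/\pi_1(W)$, in the spirit of the final step of Proposition \ref{thin-walss-bi-Lipschitz-embedding} and of Lemma \ref{auxiliar-goodpath}, which absorbs the additive constant coming from the reduction to $W_+$. The principal obstacle is the decomposition step: one must verify that the intersection $\gamma \cap W_+$ is tame enough to yield only finitely many subpaths, which is precisely where the goodness of $\gamma$ (finiteness of $\gamma \cap \mathring{C}$ per chamber, together with $e(\gamma) = 0$) is essential.
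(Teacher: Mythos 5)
Your proposal is correct and rests on the same mechanism as the paper's proof: produce a good path via Proposition \ref{good-path}, cut it into subpaths that either stay in the wall or meet a thin wall only at their endpoints, apply the hypothesis to the non-backtracking pieces, convert the resulting chamber-distance bounds into wall distances through the isometric embedding of Lemma \ref{walls-embedding-in-chambers}, and sum. The one organizational difference is your preliminary reduction to points of $W_{+}$: the paper applies Proposition \ref{good-path} directly to arbitrary $p,q\in W$ (the proposition is stated for any two points of a wall) and decomposes the good path as $\gamma_{1}^{W}\gamma_{1}^{W_{\pm}}\cdots\gamma_{m+1}^{W}$, bounding $\dd_{W}$ along the pieces lying in $W$ by their lengths and treating the excursions exactly as you do; this yields the purely multiplicative estimate $\dd_{W}(p,q)\leq \alpha\beta\,\dd(p,q)$, so the bi-Lipschitz conclusion is immediate and no additive constant ever appears. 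Your detour through $W_{+}$ is what creates the additive error forcing your final compactness argument on $(W\times W)/\pi_{1}(W)$; that step can be made to work (it is the same cocompactness argument as in Proposition \ref{thin-walss-bi-Lipschitz-embedding}), but it is avoidable. Your closing caveat about the tameness of $\gamma\cap W_{+}$ is a genuine technical point, but it affects the paper's own finite decomposition equally and is passed over silently there as well.
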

\begin{proof}
	As the inclusion $(W,\dd_{W})\hookrightarrow \widetilde{M}$ is 1-Lipschitz, we only need to prove that there exists a constant $k\geq 1$ such that for all $p,q\in W$ we have that $\dd_{W}(p,q)\leq k\cdot \dd(p,q)$. By Proposition \ref{good-path}, there exists a good path $\gamma$ joining $p$ and $q$ such that for $\beta\geq 1$, $L(\gamma) \leq \beta\dd(p,q)$. \\
	Let $m$ be the number of chambers adjacent to $W$ whose interior intersects $\gamma$. \\
	Let us relabel the following subpaths of $\gamma$ as follows:
	\begin{enumerate}
		\item For $i=1,...,m$, let $\gamma_{i}^{W}$ be a path contained in $W$.
		\item For $i=1,...,m$, let $\gamma_{i}^{W_{\pm}}$ be a good path with endpoints on the thin walls $W_{\pm}$ and which does not backtrack in $W_{\pm}$.
	\end{enumerate}
	With this notation, we can decompose $\gamma$ in the following form:
	$$\gamma=\gamma_{1}^{W}\gamma_{1}^{W_{\pm}}...\gamma_{m}^{W}\gamma_{m}^{W_{\pm}}\gamma_{m+1}^{W}.$$
	Let $C$ be a chamber adjacent to $W$. We want to replace the paths $\gamma_{i}^{W_{\pm}}$ by paths $\eta_{i}$ in $C$ and such that the total length of the curve obtained does not exceed $\alpha\cdot L(\gamma)$.\\
	Let $W_{1},W_{2}$ be two walls different from $W$ and adjacent to $C$. As $\gamma_{i}^{W_{\pm}}$ is a good path, then there exist subpaths of $\gamma_{i}^{W_{\pm}}$ that connect different walls and these subpaths only cross the walls in two points. \\
	Figure \ref{fig:gamma-by-eta} illustrates the following construction. Let $p',q'$ be the endpoints of the good path $\gamma_{1}^{W_{\pm}}$. Suppose that $\gamma_{1,1}^{W_\pm}$ is the subpath of $\gamma_{1}^{W_{\pm}}$ that connects $W$ with $W_{1}$ and that $\gamma_{1,2}^{W_\pm}$ is the subpath of $\gamma_{2}^{W_{\pm}}$ that connects $W_{1}$ with $W_{2}$. Let $p_{1}$ be the point of $W_{1,\pm}$ where $\gamma_{1,1}^{W_\pm}$ crosses $W_{1,\pm}$ and let $q_{1}$ be the point of $W_{1,\pm}$ where $\gamma_{1,2}^{W_\pm}$ crosses $W_{1,\pm}$. Let $\eta_{p_{1},q_{1}}^{C}$ be the geodesic path inside $C$ between $p_{1}$ and $q_{1}$. With this process, we are cutting the path inside $W_{1}$ and replacing it by a path contained in $C$ and such that its length is less than $k$ times the length of the path inside $W_{1}$, for some positive constant $k$. Now, repeat this process for all the other subpaths $\gamma_{1,i}^{W_{\pm}}$ of $\gamma_{1}^{W_{\pm}}$ between the rest of the walls adjacent to $C$. Let $\eta_{1}$ be the path formed as follows. Start in $p'$ and follow the subpath of $\gamma_{1,1}^{W_\pm}$ between $p'$ and $p_{1}$, then on $p_{1}$ switch to following the path $\eta_{p_{1},q_{1}}^{C}$ and repeat this procedure until returning to $q'$. The path $\eta_{1}$ between $p'$ and $q'$ is a path completely contained in $C$ and by hypothesis its length is bounded by $\alpha_{1}\cdot L(\gamma_{1}^{W_{\pm}})$.
	\begin{figure}[h]
		\centering
		\includegraphics[scale=0.22]{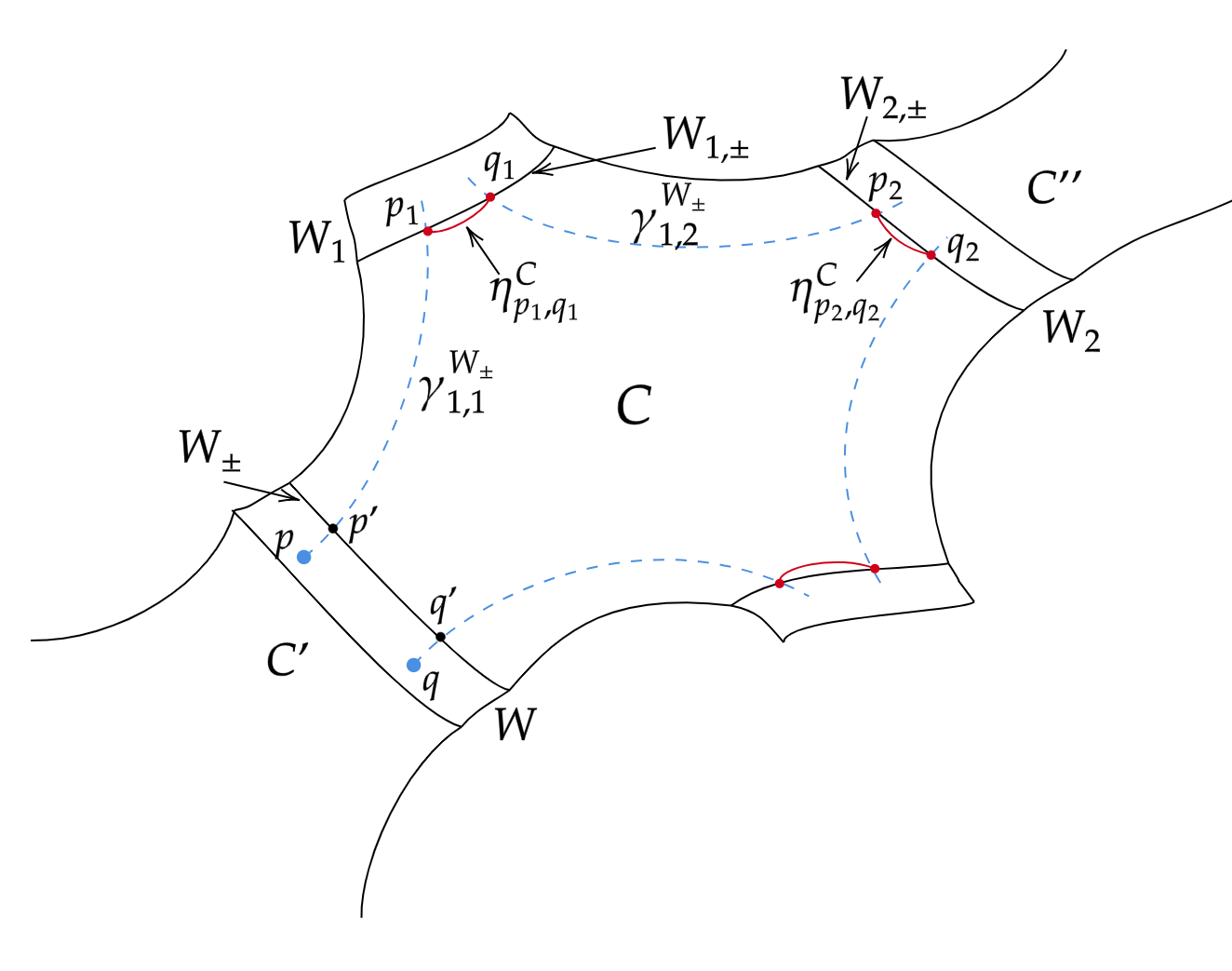}
		\caption{Here we illustrate how the paths $\eta_{p_{i},q_{i}}^{C}$ replace $\gamma_{i}^{W}$ by paths inside $C$. With the same notation as in Lemma \ref{chambers-then-wals-bilipschitz}, the points $p',q'$ lie on $W_{\pm}$ and the points $p_{i},q_{i}$ lie on the thin walls $W_{i,\pm}$. The paths $\gamma_{i,j}^{\pm}$ are good paths joining the thin walls $W_{i,\pm}, W_{j,\pm}$ and passing through $p_{i+1},q_{i}$.}
		\label{fig:gamma-by-eta}
	\end{figure} 
	Repeat this for all the paths $\gamma_{i}^{W_{\pm}}$ in $\gamma$. After replacing all the curves, the resultant curve $\eta=\gamma_{1}^{W}\eta_{1}...\gamma_{m}^{W}\eta_{m}\gamma_{m+1}^{W}$ has length less that $\alpha \cdot L(\gamma)$. Moreover, we have that $$\dd_{W}(p,q)\leq L(\eta) \leq \alpha \cdot L(\gamma)$$
	If $ k = \alpha\cdot  \beta$ then we have shown that $\dd_{W}(p,q) \leq k \cdot \dd(p,q)$.
\end{proof}

As a consequence of the previous results, we are now able to prove that the walls and chambers embeddings in $\til{M}$ are bi-Lipschitz.
		
\begin{Proposition}\label{walls-bilipschitz}
	 Let $M$ be a cusp-decomposable manifold as in definition \ref{CDM} and let $W\subseteq \widetilde{M}$ be a wall. Then the inclusion of $(W,\dd_{W})\hookrightarrow \widetilde{M}$ is a bi-Lipschitz embedding. In particular, it is a quasi-isometric embedding. Moreover, the bi-Lipschitz constant of the embedding only depends on the geometry of $\widetilde{M}$. 
\end{Proposition}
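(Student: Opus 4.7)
The plan is to verify the hypothesis of Lemma \ref{chambers-then-wals-bilipschitz}, whose conclusion is exactly the bi-Lipschitz embedding $(W,\dd_W)\hookrightarrow \widetilde{M}$ asserted here (and from which the quasi-isometric embedding follows immediately). Concretely, one must produce a constant $\alpha\geq 1$, depending only on the geometry of the pieces of $M$, such that for every pair of points $x,y\in W_{\pm}$ and every path $\gamma\subset\widetilde{M}$ from $x$ to $y$ which does not backtrack in $W_{\pm}$, one has $\dd_{C}(x,y)\leq \alpha\cdot L(\gamma)$, where $C$ is the chamber containing $W_{\pm}$.

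The strategy I would follow is to work in the electric space $\wh{C}$ of the chamber $C$, regarded as a pinched Hadamard manifold whose boundary horospheres are exactly the thin walls $W_{i,\pm}$ adjacent to $C$. Given the non-backtracking path $\gamma$, I would construct from it an electric quasi-geodesic $\wh{\gamma}$ in $\wh{C}$ by keeping each maximal sub-arc of $\gamma$ that lies inside $C$ and replacing each maximal excursion of $\gamma$ outside $C$ (which runs from some $W_{i,\pm}$ through adjacent chambers back to some $W_{j,\pm}$) by an electric segment in $\wh{C}$ whose length is controlled by the excursion length. Proposition \ref{paths-horospheres1}, together with the tree-of-spaces structure of $\widetilde{M}$ and the uniform visual size of horospheres (Lemma \ref{Lemma4.4Farb}), furnishes the bound on the cost of the replacement and guarantees that the quasi-geodesic constants of $\wh{\gamma}$ depend only on the geometry. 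Crucially, the non-backtracking of $\gamma$ in $W_{\pm}$ passes to non-backtracking of $\wh{\gamma}$ at the cone point over $W_{\pm}$.

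I would then apply Farb's Lemma \ref{electric4.8} to $\wh{\gamma}$: its entry and exit points into $W_{\pm}$ are essentially $x$ and $y$, so the distance in $W_{\pm}$ between them is bounded by a universal constant $D$; by Lemma \ref{walls-embedding-in-chambers} this gives $\dd_{C}(x,y)=\dd_{W_{\pm}}(x,y)\leq D$. To upgrade this uniform bound to the desired linear inequality $\dd_{C}(x,y)\leq \alpha L(\gamma)$, I would dispose of the regime of small $L(\gamma)$ by a compactness argument modeled on Lemma \ref{auxiliar-goodpath}: the cocompact action on pairs of points in $W_{\pm}$ at bounded $\widetilde{M}$-distance gives a uniform bound on the ratio $\dd_{C}(x,y)/L(\gamma)$ on a fundamental domain, providing the required $\alpha$. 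The main obstacle is the projection step $\gamma\mapsto \wh{\gamma}$: one must carefully estimate how each excursion contributes to the electric length and verify that $\wh{\gamma}$ is a genuine electric quasi-geodesic with constants independent of how many chambers the excursions traverse, which is the point where Proposition \ref{paths-horospheres1} does the essential work.
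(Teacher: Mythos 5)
Your reduction is the same one the paper uses: Proposition \ref{good-path} supplies a good non-backtracking path, and the whole proposition is funneled into verifying the hypothesis of Lemma \ref{chambers-then-wals-bilipschitz}, namely $\dd_{C}(x,y)\leq \alpha\, L(\gamma)$ for paths $\gamma$ that do not backtrack in $W_{\pm}$. But the paper's verification of that hypothesis is a one-liner: it takes $\alpha=1$, appealing to the definition of $\dd_{C}(p,q)$ as an infimum of lengths of paths; no electric-space machinery is invoked at this stage (that machinery is already encapsulated in Lemma \ref{auxiliar-goodpath} and Proposition \ref{good-path}). You instead re-enter the electric space of the chamber, and it is exactly there that your argument breaks.

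The key intermediate claim, that Farb's Lemma \ref{electric4.8} applied to your electrified path $\wh{\gamma}$ gives $\dd_{W_{\pm}}(x,y)\leq D$ and hence, via Lemma \ref{walls-embedding-in-chambers}, $\dd_{C}(x,y)\leq D$, is false, and the lemma is not applicable in the way you use it. Lemma \ref{electric4.8} bounds the distance \emph{inside a horosphere $S$ between the entry point and the exit point of a path that penetrates $S$}, comparing it with the $\til{M}$-geodesic having the same endpoints; it says nothing about a path whose two endpoints lie on $S$ and which meets $S$ only there, since such a path has no entry/exit pair for $S$. Moreover the statement you extract cannot be repaired: by Lemma \ref{walls-embedding-in-chambers} the thin wall $(W_{\pm},\dd_{C})$ carries its horospherical metric and hence has infinite diameter, while any two of its points can be joined by a non-backtracking path (push the $C$-geodesic between them, which runs in the convex hypersurface $W_{\pm}$, slightly into the interior of the chamber). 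A uniform bound $\dd_{C}(x,y)\leq D$ for all pairs admitting a non-backtracking path would therefore force $W_{\pm}$ to be bounded, a contradiction. Any correct verification must yield a bound scaling linearly with $L(\gamma)$, and your closing compactness step, modeled on Lemma \ref{auxiliar-goodpath}, only controls pairs at uniformly bounded distance, so it cannot supply that scaling. The proof you want here is much shorter: after quoting Proposition \ref{good-path} and Lemma \ref{chambers-then-wals-bilipschitz}, the required inequality follows from the definition of the path metric $\dd_{C}$, which is how the paper concludes.
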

\begin{proof}
	Let $W_{+}$ be a thin wall of $W$ and $C$ be the chamber which contains $W_{+}$. Let $p,q$ be two points on $W_{+}$. Proposition \ref{good-path} guarantees that there exists a good path $\gamma$ in $\widetilde{M}$ that joins $p,q$ and which does not backtrack on $W_{+}$. By Lemma \ref{chambers-then-wals-bilipschitz}, we only need to prove that there exists a constant $\alpha\geq 1$ only depending on the geometry of $\widetilde{M}$ such that the following condition it is true:
	\begin{equation*}
		\dd_{C}(p,q)\leq \alpha\cdot L(\gamma).
	\end{equation*}
	Observe that as $\dd_{C}(p,q)$ is the path metric over $C$, by definition this is the infimum over all the paths that joins $p,q$ therefore with $\alpha=1$ we obtain the result.
\end{proof}
\begin{Proposition}\label{chambers-bilipschitz}
	Let $M$ be cusp decomposable manifold, then the inclusion of a chamber in $\widetilde{M}$ is a bi-Lipschitz embedding.
\end{Proposition}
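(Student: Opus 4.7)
The inclusion $(C,\dd_C)\hookrightarrow(\til{M},\dd)$ is $1$-Lipschitz by definition of the path metric, so it suffices to produce a constant $k\geq 1$ with $\dd_C(p,q)\leq k\cdot \dd(p,q)$ for all $p,q\in C$. My strategy runs parallel to the proof of Proposition \ref{walls-bilipschitz}: construct a good path $\gamma$ between $p$ and $q$ of controlled ambient length, and then replace each maximal sub-path of $\gamma$ lying outside $C$ by a path in $C$ of comparable length.

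As a preliminary step, I would verify that Proposition \ref{good-path} and Lemma \ref{auxiliar-goodpath} extend from pairs in a common wall to arbitrary pairs in $\til{M}$; the only ingredient sensitive to being in a common wall is the compactness of the set of pairs $(x,y)$ with $\dd(x,y)\leq D$, and this compactness persists modulo the cocompact $\pi_1(M)$-action on $\til{M}$. Granting this, for any $p,q\in C$ there is a good path $\gamma$ from $p$ to $q$ in $\til{M}$ with $L(\gamma)\leq \beta\cdot \dd(p,q)$, where $\beta$ depends only on the geometry.

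The key geometric observation is that every maximal sub-path $\sigma$ of $\gamma$ lying in $\til{M}\setminus\mathring{C}$ both enters and exits $C$ through the \emph{same} thin wall $W_+$. This is forced by the tree-of-spaces structure: removing $\mathring{C}$ disconnects $\til{M}$ into components in bijection with the walls of $\til{M}$ adjacent to $C$, each component consisting of one neighbouring wall together with the subtree hanging off the corresponding edge of the Bass--Serre tree, and each thin wall of $C$ sits in exactly one such component. Since $\sigma$ is connected, its two endpoints $a,b$ lie in the same thin wall $W_+$. I then replace $\sigma$ by a path in $C$ joining $a$ to $b$ of length at most $\alpha\cdot L(\sigma)$, where $\alpha$ is the uniform constant obtained by chaining Lemma \ref{walls-embedding-in-chambers} (the inclusion $W_+\hookrightarrow C$ is isometric), Proposition \ref{thin-walss-bi-Lipschitz-embedding} ($W_+\hookrightarrow W$ is bi-Lipschitz), and Proposition \ref{walls-bilipschitz} ($W\hookrightarrow\til{M}$ is bi-Lipschitz), which yield
\[
\dd_C(a,b)=\dd_{W_+}(a,b)\leq \alpha\cdot \dd(a,b)\leq \alpha\cdot L(\sigma).
\]
Performing this replacement on every outside sub-path gives a path in $C$ from $p$ to $q$ of total length at most $\alpha\cdot L(\gamma)\leq \alpha\beta\cdot \dd(p,q)$, whence $\dd_C(p,q)\leq \alpha\beta\cdot \dd(p,q)$.

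The main obstacle is the tree-separation step showing that outside sub-paths return through the thin wall from which they left; this relies crucially on $\til{M}$ being a \emph{tree} of spaces, so that the subtrees hanging off $C$ across distinct neighbouring walls do not communicate outside $\mathring{C}$. A secondary technical point is the extension of Proposition \ref{good-path} past the common-wall hypothesis, which amounts to re-examining the cocompactness step in the proof of Lemma \ref{auxiliar-goodpath}.
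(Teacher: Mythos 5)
Your proposal is correct and follows essentially the same route as the paper: decompose an ambient path joining $p,q\in C$ into subpaths inside $C$ and excursions that leave and re-enter through the same thin wall, then use the established bi-Lipschitz control on walls (Lemma \ref{chambers-then-wals-bilipschitz}/Proposition \ref{walls-bilipschitz}, combined with Proposition \ref{thin-walss-bi-Lipschitz-embedding} and Lemma \ref{walls-embedding-in-chambers}) to replace each excursion by a path in the thin wall, hence in $C$, of comparable length. The only real difference is that the paper starts from a geodesic in $\widetilde{M}$ (so $L(\gamma)=\dd(p,q)$), which makes your preliminary extension of Proposition \ref{good-path} and Lemma \ref{auxiliar-goodpath} to arbitrary pairs unnecessary, while your explicit tree-separation argument for why each excursion returns through the same thin wall is left implicit in the paper's decomposition.
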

\begin{proof}
	Let $p,q$ be points in a chamber $C\in \widetilde{M}$ and $\gamma$ a geodesic from $p$ to $q$. We can decompose $\gamma$ as follows. Let $\gamma_{i}$ be geodesics in $C$ and let $p_{i},q_{i}\in W_{i,+}$ be the endpoints of a path $\eta_{i}$ where $W_{i,+}$ is a thin wall adjacent to $C$. So we may write  $\gamma=\gamma_{1}\eta_{1}...\gamma_{n}\eta_{n}\gamma_{n+1}$. \\
	By Lemma \ref{chambers-then-wals-bilipschitz}, there exists a constant $a \geq 1$ such that $\dd_{W_{i}}(p_{i},q_{i})\leq a\cdot \dd(p_{i},q_{i})$. Then $\dd_{W_{i,+}}(p_{i},q_{i})\leq a\cdot \dd(p_{i},q_{i})$. We can replace each $\eta_{i}$ with a path $\eta_{i}'\in W_{i,+}$  that has the same endpoints as $\eta_{i}$ but such that its length is less than $a \dd(p_{i},q_{i})$. So, the new path $\gamma'=\gamma_{1}\eta_{1}'...\gamma_{n}\eta_{n}'\gamma_{n+1}$ is contained in $C$ and has length at most $ a' \dd(p,q)$. Therefore $\dd_{C}(p,q)\leq a' \dd(p,q)$. 
\end{proof}
We can now present the main result of this section.
\begin{Theorem}\label{qi-embedded}
	Let $M$ be a cusp-decomposable manifold. Then, the inclusion of chambers and walls (with their path metric) in $\widetilde{M}$ are quasi-isometric embeddings.  
\end{Theorem}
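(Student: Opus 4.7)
The plan is that Theorem \ref{qi-embedded} essentially packages together the two preceding results, Proposition \ref{walls-bilipschitz} for walls and Proposition \ref{chambers-bilipschitz} for chambers, by observing that every bi-Lipschitz embedding is automatically a quasi-isometric embedding. So the proof is a short two-step verification rather than a new construction.

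Concretely, for a wall $W \subseteq \widetilde{M}$, Proposition \ref{walls-bilipschitz} produces a constant $k \geq 1$, depending only on the geometry of $\widetilde{M}$, such that
\[
    \tfrac{1}{k}\,\dd_{W}(x,y) \;\leq\; \dd(x,y) \;\leq\; k\,\dd_{W}(x,y) \qquad \text{for all } x,y \in W.
\]
Taking $c = k$ and $d = 0$ immediately gives the $(c,d)$-quasi-isometric embedding inequality from the definition in the introduction. The argument for a chamber $C \subseteq \widetilde{M}$ is identical, using Proposition \ref{chambers-bilipschitz} in place of Proposition \ref{walls-bilipschitz}.

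The only thing that is worth flagging along the way is why the theorem, stated at the level of covering spaces, is equivalent to Theorem \ref{quasi-isometric-embeding}, which is stated at the level of fundamental groups. The bridge is the Milnor-Svar\v{c} Lemma (Proposition \ref{lemaMS}): the fundamental group of a piece acts properly, cocompactly, and by isometries on the corresponding chamber of $\widetilde{M}$, so it is quasi-isometric to that chamber, and similarly the fundamental group of a wall is quasi-isometric to the corresponding wall in $\widetilde{M}$ (using Proposition \ref{thin-walss-bi-Lipschitz-embedding} to pass between a wall and its thin wall if needed). Composing these quasi-isometries with the bi-Lipschitz inclusions of chambers and walls into $\widetilde{M}$, and with the quasi-isometry $\pi_{1}(M) \to \widetilde{M}$ from Milnor-Svar\v{c}, yields the quasi-isometric embedding of the fundamental groups of pieces and walls into $\pi_{1}(M)$.

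The genuinely hard work already happened: constructing non-backtracking good paths in Proposition \ref{good-path} via the electric-space machinery of Farb, and using these to control path lengths in walls and chambers in Lemma \ref{chambers-then-wals-bilipschitz}. Relative to those inputs, the proof of Theorem \ref{qi-embedded} is a one-line verification plus, if desired, the Milnor-Svar\v{c} translation to the group-theoretic formulation.
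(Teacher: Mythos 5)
Your proposal is correct and matches the paper's own proof: the theorem is deduced directly from Proposition \ref{walls-bilipschitz} and Proposition \ref{chambers-bilipschitz}, together with the observation that a bi-Lipschitz embedding is a quasi-isometric embedding (the paper also cites Lemma \ref{walls-embedding-in-chambers} for the isometric embedding of thin walls, but the substance is the same). Your added remark on passing to fundamental groups via the Milnor--Svar\v{c} Lemma is consistent with how the paper relates this statement to Theorem \ref{quasi-isometric-embeding}.
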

\begin{proof}
	Lemma \ref{walls-embedding-in-chambers} implies that $C,W\hookrightarrow \widetilde{M}$ are isometric embeddings. Lemma \ref{walls-bilipschitz} and Proposition \ref{chambers-bilipschitz} imply that $C,W\hookrightarrow \widetilde{M}$ are bi-Lipschitz embeddings. Therefore chambers and walls are quasi-isometrically embedded in $\widetilde{M}$. 
\end{proof}

\end{document}